\newtheorem{remark}{Remark}[section]
\newtheorem{theorem}{Theorem}[section]
\newtheorem{lemma}{Lemma}[section]
\newtheorem{corollary}{Corollary}[section]
\DeclareMathOperator*{\argmin}{arg\,min}
\newcommand{\ve}{\varepsilon}
\newcommand{\mop}[1]{{\mathop{\mathrm{#1}}}}
\newcommand{\inv}{^{-1}}
\title[Homogenization of a locally periodic oscillating boundary]{Homogenization of a locally periodic oscillating boundary}
\author{S. Aiyappan}
\address{Fraunhofer ITWM, Kaiserslautern, Germany}
\email{aiyappan@itwm.fraunhofer.de, aiyappan.iisc@gmail.com}
\author{K. Pettersson}
\address{Chalmers University of Technology, Sweden}
\email{klaspe@chalmers.se}
\subjclass[2000]{80M35,80M40,35B27,49J20}
\date{\today}
\keywords{Homogenization, asymptotic analysis, periodic unfolding, locally periodic boundary, oscillating boundary.}
\begin{document}

\begin{abstract}
This paper deals with the homogenization of a mixed boundary value problem for the Laplace operator in a domain
with locally periodic oscillating boundary.
The Neumann condition is prescribed on the oscillating part of the boundary, and the Dirichlet condition on a separate part.
It is shown that the homogenization result holds in the sense of weak $L^2$ convergence of the solutions and their flows,
under natural hypothesis on the regularity of the domain.
The strong $L^2$ convergence of average preserving extensions of the solutions and their flows is also considered.
\end{abstract}

\maketitle

\section{Introduction}\label{Sec-Intro} 
This paper is concerned with the homogenization of a boundary value problem for the Laplace operator on
a domain in $\mathbb{R}^2$ with locally periodic oscillating boundary.
Specifically, the domain is given by
\begin{align*}
\Omega^\varepsilon=\big\{x \in \mathbb{R}^2: 0<x_1<1, \,\, 0<x_2<\eta\big(x_1, \frac{x_1}{\varepsilon}\big)\big\},
\end{align*}
where $\eta$ is a positive Lipschitz continuous function which is periodic in the second variable, and $\ve$ is a small positive parameter (see Figure~\ref{fig:domain}(a)).
Certain further requirements are imposed on $\eta$ to ensure a particularly simple structure of the homogenized problem.
On the oscillating part of the boundary the Neumann condition is prescribed, and on a separate part the Dirichlet condition,
and the data are assumed to be $L^2$.
We are interested in the asymptotic behavior of the solutions $u^\ve$ to the boundary value problem, and their flows, as $\ve$ tends to zero.

Domains with oscillating boundaries have attracted particular interest in the case where the domain is thin,
that is to say when homogenization and dimension reduction may take place.
This is natural in the mathematical physics program of the derivation of lower-dimensional theories
from three-dimensional (see e.g.~\cite{ball2002some}).
There is a rich literature on thin heterogeneous domains (see \cite{nazarov2001asymptotic,oleinik2009mathematical,ArVi-SIMA-16} and the references therein).
Asymptotic analysis in thin domains with locally periodic oscillating boundary was conducted 
in for example~\cite{chechkin1999boundary,mel2010asymptotic,borisov2010asymptotics,arrieta2011homogenization,akimova2004asymptotics,ArVi-SIMA-16,pettersson2017two,ArVi-JMAA-17}.

There are many works on homogenization in periodically oscillating domains with pillar type oscillations of fixed
amplitude where the cross-section of each pillar is constant in the vertical direction.
For the literature on pillar type oscillations, we refer to \cite{esposito1997homogenization,NanRavBid1,GaMe-JDE-18} and the references therein.
Oscillating boundary domains with non-uniform cylindrical pillars,
that is when the cross-sections of the pillars are varying in the vertical direction,
have been considered in \cite{Ga-RdM-94,Me-MMA-08,AiNaRa-CV,MaNaRa2018}.
In the mentioned works, the top boundary of the pillars have been assumed to be flat, that is the measure of the cross-section of
each pillar at the maximum height is assumed to be positive, and also the base of each pillar assumed to be flat.
There are few works on non-flat top boundaries, and \cite{BrCh-RdM-97,AiNaRa-CCM} stand out. 
In \cite{BrCh-RdM-97}, the authors restrict the boundary graph functions to be smooth, periodic, and to have a unique maximum in each period.
In \cite{GaGuMu-ARMA}, the authors consider an oscillating domain without explicit periodicity assumption and the base
of each uniform pillar is allowed to be non-flat.
Locally periodic flat pillar type domains were considered in \cite{DaPe-DCDS}, with respect to width and height.
The works of Mel'nyk and his collaborators (c.f. \cite{mel1997asymptotics,mel1999homogenization,de2005asymptotic,DurMel,mel2015asymptotic,gaudiello2019homogenization})
appear to have had a strong influence on later developments, after the initial works of Brizzi and Chalot~\cite{BriCha78,BrCh-RdM-97}.

In this paper, the oscillating domain $\Omega^\ve$ is a bounded region partially bounded by the graph of a locally periodic Lipschitz function $\eta(x_1,x_1/\ve)$, where $\eta : [0,1] \times \mathbb{T} \to \mathbb{R}$,
$\mathbb{T}$ is the 1-torus,
and $\ve$ is a small parameter.
These assumptions on $\eta$ ensure that the domain is connected and Lipschitz. 
In particular, the domain is not thin in the direction normal to oscillation, and not of pillar-type, and 
no assumptions are made on the flatness.
Under these assumptions, the domain naturally becomes asymptotically disconnected (in the $x_1$ direction)
between two curves, one that appears 
as a part of the limiting boundary and one that appears as an interior interface, as $\ve$ tends to zero.
The assumptions on $\eta$ guarantee that these curves are graphs of Lipschitz functions.

The analysis simplifies considerably with the Brizzi-Chalot condition of a single bump in each period.
It appears to be worthwhile to remark that it is not generally true that as soon as there is more than
one bump in each period, even if one restricts to the smallest possible periodicity cell,
the asymptotic limit will not be decoupled (fast and slow variables).
An example that shows that the connectedness of the sections is not necessary is given.

The expected influence of the domain oscillations on  
the asymptotic behavior of the solutions 
is that since the Laplace operator is local and the periodicity of the domain makes a region
asymptotically become non-connected in the $x_1$ direction,
$\partial/\partial x_1$ cannot be present in the homogenized equation in that region.
This will show in the homogenized boundary value problem.

The result of this paper is the homogenization of the domain for the particular boundary value problem
in which the heterogeneity is only in the domain.
Three properties of homogenization are shown.
Namely, (i) the weak $L^2$ convergence of the zero-extended solutions and their flows (Theorem \ref{tm:homogenization}),
(ii) that the error of the zeroth approximation of the solutions and their flows converge strongly to zero in $L^2$ restricted to the 
oscillating domain (Theorem \ref{tm:justification}), and (iii) the strong $L^2$ convergence of average preserving extensions of the solutions and their flows (Theorem \ref{tm:homogenization2}).
In regard to (iii), in~\cite{BrCh-RdM-97} reflection extensions were constructed and used, while the extension we use is the one used in~\cite{lipton1990darcy,DaPe-DCDS}.\\

The analysis methods we use are standard techniques of asymptotic analysis and homogenization in particular.
The method of homogenization is outlined in \cite{jikov1994sm}.
The method of periodic unfolding is described in \cite{cioranescu2002periodic,CiDaGr-SIMA-08,cioranescu2018periodic},
which is closely related to the notion of two-scale convergence \cite{nguetseng1989general,allaire1992homogenization,zhikov2004two},
a generalization of weak convergence.
Some works in which the unfolding method was used extensively in problems with oscillating boundary are
\cite{BlGaGr-JMPA-1,BlGaGr-JMPA-2,DaPe-DCDS,AiNaRa-CV}.
The homogenized problem is of degenerate elliptic type, as will be described below (c.f.~\cite{BrCh-RdM-97}).
The classical theory of Sobolev spaces for such is outlined in \cite{kufner1985weighted, kufner1987some}.
For the method of asymptotic expansions we refer to~\cite{bakhvalov1974averaged,bakhvalov1975averaging,bakhvalov1975averaging2,papanicolau1978asymptotic,oleinik1975,sanchez1980non,bakhvalov2012homogenisation,spagnolo2007sulla,berdichevsky,babuska,marchenko1964boundary}.

The rest of this paper is organized as follows.
The problem statement and the results are presented in Section~\ref{sec:problem}.
The homogenized problem is derived using formal asymptotic expansions of the solutions in Section~\ref{sec:expansion}.
In Section~\ref{sec:zigzag}, an example is provided that shows that the first term in the asymptotic expansion to $u^\ve$ may be globally regular even in a degenerating case.
In Section~\ref{sec:unfolding}, the mean value property for the periodic unfolding is presented, adjusted to the present problem. The homogenization result of weak convergence of the solutions and their flows is established in Section~\ref{sec:homogenization}.
In Section~\ref{sec:justification}, the convergence of energy is shown. 
In Section~\ref{sec:averagepreserving}, the strong convergence of the extended solutions and their flows
using average preserving extensions is shown.
A numerical example, illustrating the rate of convergence is presented in Section~\ref{sec:numerical}.
In Section~\ref{sec:nonconnected}, some information about a case of non-connected sections is provided.

\section{Problem statement and results}\label{sec:problem}

In this section we state the boundary value problem and present the results.

With strictly positive Lipschitz $\eta : [0,1] \times \mathbb{T} \to \mathbb{R}$,
the sequence of Lipschitz domains with periodically oscillating boundaries is for each $\ve = 1/k$, $k = 1, 2, \ldots$, defined by
\begin{align*}
\Omega^\varepsilon & = \big\{ x \in \mathbb{R}^2 : 0 < x_1 < 1 ,\,\, 0 < x_2 < \eta\big(x_1, \frac{x_1}{\ve}\big) \big\}.
\end{align*}
Here, $\mathbb{T}$ denotes the one-dimensional torus realized as $(0,1)$.
See Figure~\ref{fig:domain}(a) for an illustration of $\Omega^\ve$.
Additional restrictions on $\eta$ will be added below in order to ensure homogenization.

Let $u^\varepsilon \in H^1(\Omega^\varepsilon, \Gamma)$ be the sequence of solutions to the following mixed boundary value problem:
\begin{align}\label{eq:originalproblem}
-\Delta u & = f \quad \text{ in } \Omega^\varepsilon, \notag\\
u & = 0 \quad \text{ on } \Gamma, \\
\nabla u \cdot \nu & = 0 \quad \text{ on } \partial \Omega^\ve \setminus \Gamma,\notag
\end{align}
with $f$ in $L^2( \Omega )$, where $\Omega^\ve \subset \Omega$.
Here $\nu$ denotes the outward unit normal to the domain,
and $H^1(\Omega^\varepsilon, \Gamma)$ the functions in $H^1(\Omega^\varepsilon)$ with zero trace on $\Gamma = (0,1) \times \{0\}$.
Our goal is to describe the asymptotic behavior of the solutions $u^\ve$ to~\eqref{eq:originalproblem} as $\ve$ tends to zero.

\begin{figure}[!hb] 
    \centering
    \begin{minipage}{.5\textwidth}
        \centering
        \includegraphics[trim=40mm 10mm 30mm 5mm, clip,height=8cm]{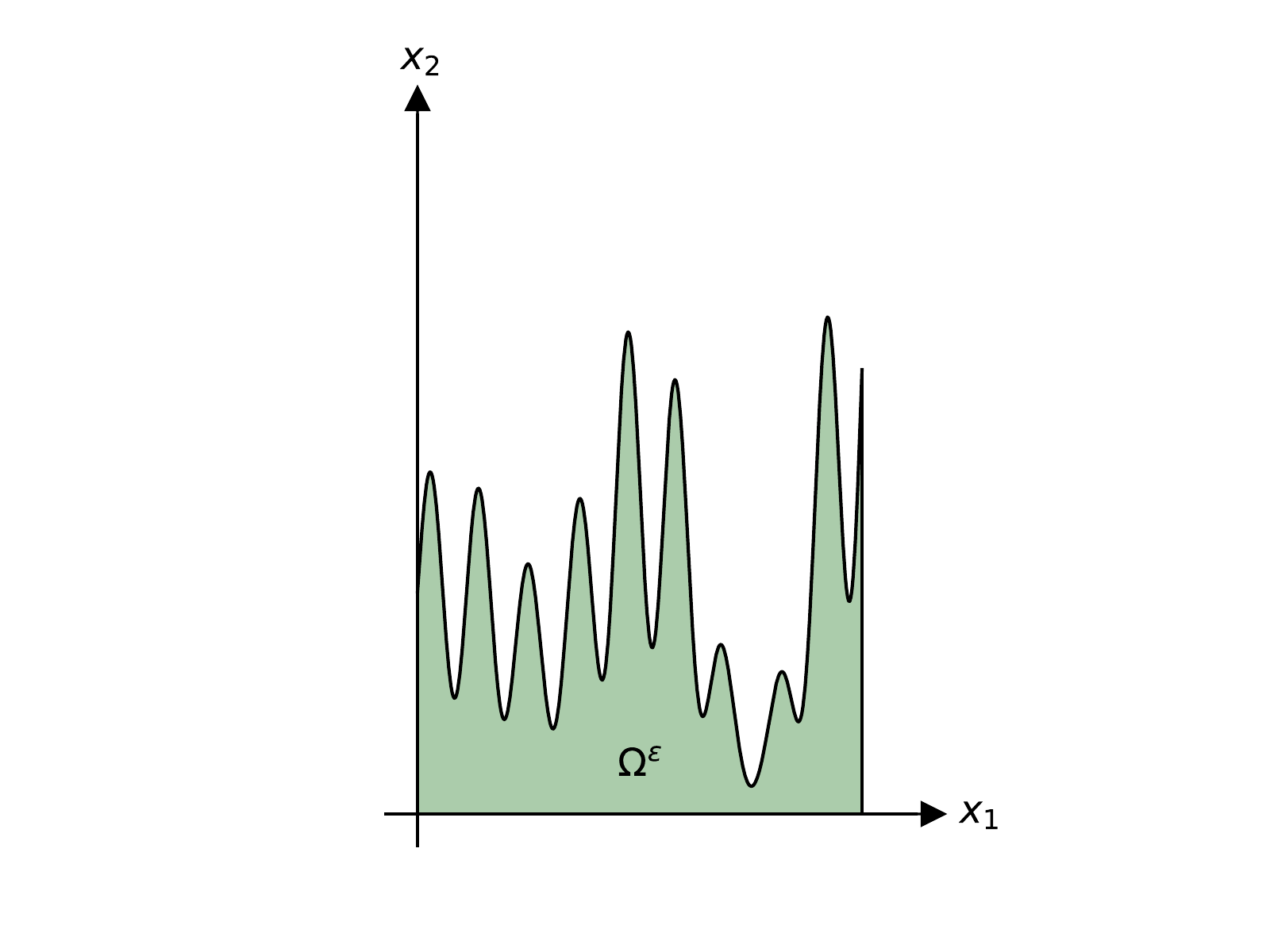}\\
        (a)
    \end{minipage}%
    \begin{minipage}{0.5\textwidth}
        \centering
        \includegraphics[trim=40mm 10mm 30mm 5mm, clip,height=8cm]{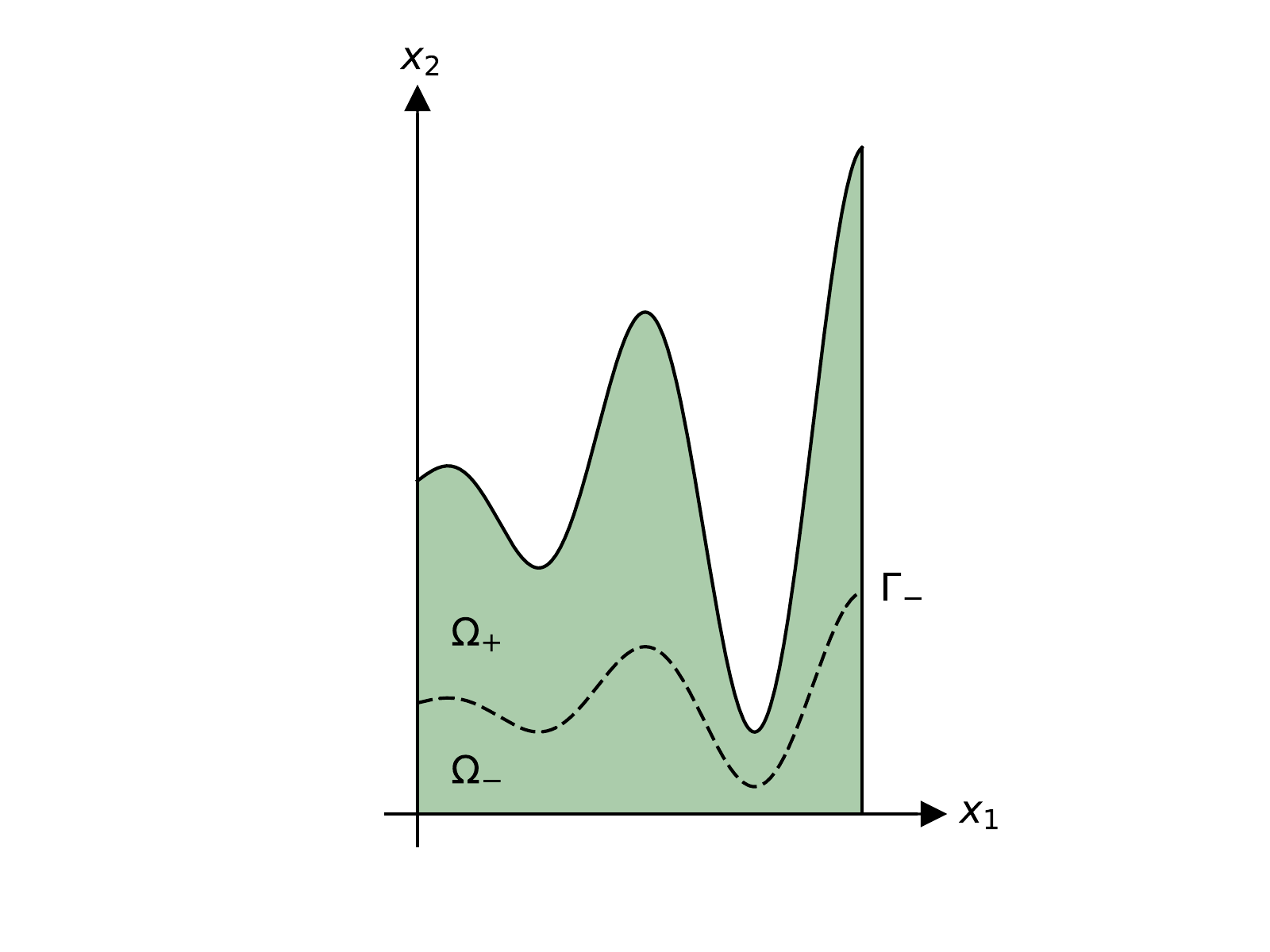}\\
        (b)
    \end{minipage}
    \caption{A locally periodic domain $\Omega^\ve$ (a), $\ve = 1/8$, and the corresponding homogeneous domain $\Omega$ (b),
                 with $\Gamma_-$ marked with a dashed line separating the regions $\Omega_+$ and $\Omega_-$.
                 The particular function $\eta$ is 
                 $\eta(x,y) = (1 + x \cos(4\pi x))(1 + \sin(2\pi(x+y))/2)$.
                 }
    \label{fig:domain}
\end{figure}

The solutions $u^\ve$ will be approximated in terms of the solution $u^0$ to the homogenized problem:
\begin{align}\label{eq:limitproblem}
-\mop{div}(h A^0 \nabla u) & = h f \quad \text{ in } \Omega, \notag\\
u & = 0 \,\,\,\quad \text{ on } \Gamma, \\
h A^0 \nabla u \cdot \nu & = 0 \,\,\,\quad \text{ on } \partial \Omega \setminus \Gamma, \notag
\end{align}
where the coefficients $A^0$ and the domain $\Omega$ are defined as follows.
In terms of the Lipschitz functions
\begin{align*}
\eta_-(x) & = \min_y \eta(x,y), &
\eta_+(x) & = \max_y \eta(x,y),
\end{align*}
the domain
\begin{align*}
\Omega & = \{ x \in \mathbb{R}^2 : 0 < x_1 < 1 , \,\, 0 < x_2 < \eta_+(x_1) \}
\end{align*}
is separated into the regions
\begin{align*}
\Omega_- & = \{ x \in \mathbb{R}^2 : 0 < x_1 < 1 , \,\, 0 <  x_2 < \eta_-(x_1) \}, \\
\Omega_+ & = \{ x \in \mathbb{R}^2 : 0 < x_1 < 1 , \,\,   \eta_-(x_1) <  x_2 < \eta_+(x_1) \},
\end{align*}
with interior interface $\Gamma_- = \partial \Omega_- \cap \partial \Omega_+$. 
The effective matrix is 
\begin{align}\label{eq:effmatrix}
A^0 & = \left(
\begin{matrix}
\chi_{\Omega_-}& 0 \\ 0 & 1
\end{matrix}\right).
\end{align}

An illustration of $\Omega$, with the regions $\Omega_+$ and $\Omega_-$, and the interface $\Gamma_-$, indicated
is shown in Figure~\ref{fig:domain}(b),
corresponding to the domain $\Omega^\ve$ in Figure~\ref{fig:domain}(a).

Let $h$ denote what we call the density of $\Omega^\ve$ in $\Omega$:
\begin{align}
Y(x) & = \{ y : x_2 < \eta(x_1, y) \},\label{eq:Ydef}\\
h(x) & = |Y(x)|.\label{eq:hdef}
\end{align}
In $\Omega_-$ the density is $h = 1$. A part of the upper boundary, denoted by  $\Gamma_a$ is given by
\begin{align*}
\Gamma_a = \left \{ (x_1,x_2)~| ~ x_1 \in (0,1),~x_2= \eta_+(x_1),~h(x_1,x_2)=0 \right\}.
\end{align*}

In order to ensure that homogenization takes place, the following hypotheses will be used:
\begin{enumerate}[(H1)]
\item $Y(x)$ is connected, $x \in \Omega$.
\item $Y(x) = k/N + Y(x)$, $x \in \Omega$, $k \in \mathbb{Z}$, for some natural number $N \ge 1$, 
and $Y(x) = y_0 - Y(x)$ for some $y_0 \in [0,1/N)$, 
and $Y_0(x) = \{ y \in Y(x) : y_0 \le y \le y_0 + 1/(2N) \}$ is connected, $x \in \Omega$.
\end{enumerate}

The hypothesis (H1) means that there is only one so-called pillar or bump in each period,
and (H2) means restricting to a fundamental symmetry cell with respect to some translations and 
mirror symmetry.
The hypothesis (H1) is stronger than (H2), and it gives a sufficient condition for homogenization.
The hypothesis (H2) is included here to illustrate that it is not necessary there is only one 'bump' in each period even
if one uses the smallest periodicity cell.
The graph of a function $\eta$ that satisfies (H2) but not (H1) is illustrated in Figure~\ref{fig:domainnumerical}.

The assumption that $\eta$ is strictly positive ensures that the segment $\Gamma = (0,1) \times \{0\}$
is separated from the graph of $\eta(x_1,x_1/\ve)$, so $\Omega_-$ is a nonempty connected Lipschitz domain.
The subdomains $\Omega_+$ and $\Omega_-$ have been chosen such that $\Omega_+$ covers
the periodic region of $\Omega^\ve$, 
and $\Omega_+$ is of positive measure if $\eta(x,y)$ is non-constant in $y$ for at least one $x$.

Denote by $L^2(\Omega, h)$ the Lebesgue space $\{ v : \int_{\Omega} v^2 h \,dx < \infty \}$, and $W(\Omega, \Gamma)$
the Sobolev space
\begin{align}\label{eq:WGamma}
W(\Omega, \Gamma)
     & = 
     \big\{  v \in L^2(\Omega, h) :
     A^0 \nabla v \in L^2(\Omega, h)
, \,
     v = 0 \text{ on } \Gamma \big\},
\end{align}
where $A^0$ is defined in~\eqref{eq:effmatrix} and $h$ is defined in~\eqref{eq:Ydef}-\eqref{eq:hdef}.
The homogenized problem \eqref{eq:limitproblem} has a unique solution $u^0 \in W(\Omega, \Gamma)$.

We will first establish the homogenization of \eqref{eq:originalproblem},
that is the convergence of the solutions $u^\ve$ and their flows $\nabla u^\ve$
to the solution $u^0$ to the homogenized problem \eqref{eq:limitproblem} and its flow $hA^0 \nabla u^0$,
under hypothesis that (H1) holds.

Let $u^\ve \in H^1(\Omega^\ve, \Gamma)$ be the solutions to \eqref{eq:originalproblem},
and let $u^0 \in W(\Omega, \Gamma)$ be the solution to \eqref{eq:limitproblem}.
In Theorem~\ref{tm:homogenization}, it is shown under hypothesis (H1) that
\begin{align*}
&\widetilde{u^\ve} \rightharpoonup h u^0 \quad \text{ weakly in } L^2(\Omega),\\
&\widetilde{\nabla u^\ve} \rightharpoonup h A^0 \nabla u^0 \quad \text{ weakly in } L^2(\Omega),
\end{align*}
as $\ve$ tends to zero.
Here tilde denotes extension by zero.

At this point we know that there will be oscillations in $\widetilde{u^\ve}$ in the upper part $\Omega_+$ due to the periodicity of $\Omega_\ve$,
while no oscillations in the lower part $\Omega_-$ due to the compact embedding of $H^1(\Omega_-)$ into $L^2(\Omega_-)$.
Moreover, we cannot at this point exclude the possibility of oscillations in the solutions $\widetilde{u^\ve}$ due to
something else, as the above weak convergences may be expressed as weak unfolding or two-scale convergence. 
A next step is to check whether the strong unfolding convergence 
holds 
for the solutions $u^\ve$ and their flows.

The weak convergence of the zero extensions in the upper part $\Omega_+$ cannot be strong, unless the limit is zero.
To this end we first describe the error measured in the oscillating domain.
Not only are the unfoldings of the solutions $u^\ve$ and their flows strongly converging,
there are no oscillations.

For $u^\ve$ the solutions to \eqref{eq:originalproblem} and $u^0$ the solution to \eqref{eq:limitproblem},
in Theorem~\ref{tm:justification} it is shown under hypothesis (H1) that
\begin{align*}
& \| u^\ve - u^0 \|_{ L^2(\Omega^\ve, \,h) } \to 0,\\
& \| \nabla u^\ve - A^0 \nabla u^0 \|_{ L^2(\Omega^\ve, \,h) } \to 0,
\end{align*}
as $\ve$ tends to zero.

Turning back to the question of homogenization, there is an extension of the solutions $u^\ve$ and their flows
for which strong convergence holds in $L^2(\Omega)$.

For $u^\ve$ the solutions to \eqref{eq:originalproblem} and $u^0$ the solution to \eqref{eq:limitproblem},
in Theorem~\ref{tm:homogenization2} it is shown that when the functions are extended in a way preserving their 
average, under hypothesis (H1) (c.f. \cite{lipton1990darcy,DaPe-DCDS}),
\begin{align*}
&\widetilde{u^\ve}^m \to u^0 \quad \text{ strongly in } L^2(\Omega, h),\\
&\widetilde{\nabla u^\ve}^m  \to A^0 \nabla u^0 \quad \text{ strongly in } L^2(\Omega, h ),
\end{align*}
as $\ve$ tends to zero, when $\sim{\!m}$ denotes the particular extension (\eqref{eq:avgext1}, \eqref{eq:avgext2} in Section~\ref{sec:averagepreserving}).
It is remarked that the above mentioned convergences also hold under hypothesis (H2).

Without the hypotheses (H1), (H2), under a slightly milder restriction on the domain,
 in Theorem~\ref{tm:weaknonconnected} it is shown that
the solutions $u^\ve$ to~\eqref{eq:originalproblem} converge to the solution $u^0$ to the limit problem~\eqref{eq:limitproblemstrongdisconnected}, with $A^0$ given by~\eqref{eq:A0nonconnected}, in the
sense that
\begin{align*}
\widetilde{u^\ve} & \rightharpoonup \int_{Y(x)} u^0 \,dy \quad \text{ weakly in } L^2(\Omega), \\
\widetilde{\nabla u^\ve} & \rightharpoonup \int_{Y(x)} A^0 \nabla_x u^0 \,dy \quad \text{ weakly in } L^2(\Omega),
\end{align*}
as $\ve$ tends to zero.
One also has strong unfolding convergence.
We do not analyze further the convergence in the case of non-connected sections.

In the case of the homogeneous Dirichlet condition on the oscillating part of the boundary,
the limit is trivial in the oscillating part.
This case was studied in~\cite{amirat2004asymptotic}.

\begin{remark}[The effect of anisotropy and oscillating coefficients]\label{rem:osccoef}
To illustrate the effect of anisotropy and oscillating coefficients on the asymptotic behavior of the solutions $u^\ve$ to \eqref{eq:originalproblem},
one can consider the following elliptic model problem:
\begin{align*}
-\mop{div}\big( A\big(x, \frac{x_1}{\ve}\big) \nabla u \big) & = f \quad \text{ in } \Omega^\varepsilon, \notag\\
u & = 0 \quad \text{ on } \Gamma, \\
A\big(x, \frac{x_1}{\ve}\big) \nabla u \cdot \nu & = 0 \quad \text{ on } \partial \Omega^\ve \setminus \Gamma,\notag
\end{align*}
under the assumption that the not necessarily symmetric matrix $A \in C^1(\overline{\Omega}, L^\infty(\mathbb{T}))$
satisfies the ellipticity condition
\begin{align*}
A(x,y) \xi \cdot \xi \ge C |\xi|^2, \quad x \in \Omega, \, y \in \mathbb{T}, \, \xi \in \mathbb{R}^2.
\end{align*}
Suppose further that hypothesis (H2) is satisfied, and that the coefficient matrix satisfies the corresponding symmetry conditions:
$A(x,y) = A(x,k/N+y)$, $k \in \mathbb{Z}$, and $A(x,y) = A(x,y_0-y)$, for $x \in \Omega_+$, where $N$ and $y_0$ are the 
same as for $Y(x)$ in (H2).
Then the weak limits of $\widetilde{u^\ve}$ and $\widetilde{\nabla u^\ve}$ are $h u^0$ and $A^0 u^0$, respectively, in $L^2(\Omega)$, as $\ve$ tends to zero, where $u^0$ solves the limit problem
\begin{align*}
-\mop{div}(A^0 \nabla u) & = h f \quad \text{ in } \Omega, \notag\\
u & = 0 \,\,\,\quad \text{ on } \Gamma, \notag \\
A^0 \nabla u \cdot \nu & = 0 \,\,\,\quad \text{ on } \partial \Omega \setminus \Gamma,
\end{align*}
where the entries $A^0_{ij}$ of the effective matrix $A^0$ are given by, here including $h$,
\begin{align*}
A^0_{11} & = \chi_{\Omega_-} \Big( \int_{Y(x)} \frac{1}{A_{11}} \,dy \Big)^{-1}, \\
A^0_{12} & = A^0_{11} \int_{Y(x)} \frac{A_{12}}{A_{11}} \,dy, \\
A^0_{21} & = A^0_{11} \int_{Y(x)} \frac{A_{21}}{A_{11}} \,dy, \\
A^0_{22} & = A^0_{11} \int_{Y(x)} \frac{A_{12}}{A_{11}} \,dy \int_{Y(x)} \frac{A_{21}}{A_{11}} \,dy + \int_{Y(x)} \frac{\mop{det }A}{A_{11}} \,dy,
\end{align*}
where $A_{ij}$ denote the entries of the local matrix $A$.
In the fixed region $\Omega_-$, $A^0$ is the classical effective matrix for layered materials because there $Y(x) = \mathbb{T}$, while in the oscillating region $\Omega_+$ all but the last term in $A_{22}^0$ vanish due to the insulation in the $x_1$ direction.
In particular, for $-\Delta$ the effective matrix reduces to the matrix given in~\eqref{eq:effmatrix}.
A similar statement can be made for the case of non-connected sections analogous to Theorem~\ref{tm:weaknonconnected}.
\end{remark}

\section{Asymptotic expansions for the solutions}\label{sec:expansion}

To derive the homogenized equation \eqref{eq:limitproblem} for the solutions $u^\ve$ to the equation
\begin{align}\label{eq:originalequation}
-\Delta u^\ve & = f  \quad \text{ in } \Omega^\ve,
\end{align}
asymptotic expansions may be used, in the form of a formal power series in the small parameter $\ve$.
In the problem \eqref{eq:originalequation}, in the periodic region $\Omega^\ve_+$ of the domain $\Omega$,
$\ve$ represents both the periodicity in the $x_1$ direction,
as well as the order of magnitude of the widths of the pillars with homogeneous Neumann condition on their sides.
In the fixed region $\Omega_-$ of the domain, $\ve$ represents the periodicity on the interface $\Gamma_-$.

The above reasoning leads us to consider the following Bakhvalov ansatz for inner expansion in the periodic region $\Omega^\ve_+$:
\begin{align}\label{eq:series}
u^\ve(x) \sim \big( u^0_+(x_2,y) + \ve u^1_+(x_2,y) + \ve^2 u^2_+(x_2,y) \big)\Big|_{\displaystyle y = \frac{x_1}{\ve}},
\end{align}
where $u^i_+$ are assumed to be periodic in $y$.

Let $\chi(x,y)$ denote the characteristic function of the set
\begin{align*}
\{ (x,y) : 0 < x_1 < 1, \,\, 0 < x_2 < \eta(x_1, y)  \}.
\end{align*}
Then the characteristic function of $\Omega^\ve$ is
\begin{align*}
\chi_{\Omega^\ve}(x) = \chi\big(x, \frac{x_1}{\ve}\big).
\end{align*}
We write \eqref{eq:originalequation} in the homogeneous domain $\Omega$ as follows:
\begin{align}\label{eq:originalequation2}
-\sum_{i} \frac{\partial}{\partial x_i} \big( \chi\big(x,\frac{x_1}{\ve}\big) \frac{\partial}{\partial x_i}u^\ve \big)
& =
\chi\big(x,\frac{x_1}{\ve}\big) f  \quad \text{ in } \Omega.
\end{align}

With the ansatz \eqref{eq:series},
\begin{align*}
\Delta \sim \frac{1}{\ve^2}\frac{\partial^2}{\partial y^2} + \frac{\partial^2}{\partial x_2^2}.
\end{align*}
A substitution of \eqref{eq:series} into \eqref{eq:originalequation2} 
and collecting similar powers of $\ve$ result in the following equations for the initial powers of $\ve$:
\begin{align*}
\ve^{-2}: \quad &  -\frac{\partial}{\partial y} \big(\chi \frac{\partial u^0_+}{\partial y} \big) = 0,     \\
\ve^{-1}: \quad &   -\frac{\partial}{\partial y} \big(\chi \frac{\partial u^1_+}{\partial y} \big) = 0,     \\
\ve^{0}: \quad &    -\frac{\partial}{\partial y} \big(\chi \frac{\partial u^2_+}{\partial y} \big)
= \frac{\partial}{\partial x_2} \big(\chi \frac{\partial u^0_+}{\partial x_2} \big) + \chi f    .
\end{align*}
The equation for power $\ve^{-2}$ suggests that $u^0_+$ is independent of $y$, under the assumption that $Y(x)$ is connected, or some discrete symmetry in the problem such as (H2).
Viewing the above equations as definitions of $u^i_+$ in $\mathbb{T}$ with $x$ as a parameter,
the compatibility condition for $u^2_+$ may be read off from the $\ve^0$ equation:
\begin{align*}
\int_\mathbb{T} \big( \frac{\partial}{\partial x_2} \big(\chi \frac{\partial u^0_+}{\partial x_2} \big) + \chi f     \big) \,dy & = 0.
\end{align*}
Because
\begin{align*}
\int_\mathbb{T} \chi(x,y) \,dy & = \int_{Y(x)} dy = |Y(x)| = h(x),
\end{align*}
where
\begin{align*}
Y(x) & = \{ y : x_2 < \eta(x_1, y) \},\\
h(x) & = |Y(x)|,
\end{align*}
the compatibility condition is
\begin{align}\label{eq:limitequation}
- \frac{\partial}{\partial x_2} \big( h \frac{\partial u^0_+}{\partial x_2} \big) & = hf \quad \text{ in } \Omega_+.
\end{align}
The equation \eqref{eq:limitequation} is the homogenized equation for $u^\ve$ in the periodic part $\Omega_+$ of $\Omega$.

In the fixed region $\Omega_-$ of the domain, the equation \eqref{eq:originalequation} reads
\begin{align}\label{eq:originalequation3}
-\Delta u^\ve & = f \quad \text{ in } \Omega_-.
\end{align}
Consider the following ansatz for inner expansion in the fixed region $\Omega_-$:
\begin{align}\label{eq:series2}
u^\ve(x) \sim \big( u^0_-(x,y) + \ve u^1_-(x,y) + \ve^2 u^2_-(x,y) \big)\Big|_{\displaystyle y = \frac{x_1}{\ve}},
\end{align}
where $u^i_-$ are assumed to be periodic in $y$.
With the ansatz \eqref{eq:series2},
\begin{align*}
\Delta \sim \frac{1}{\ve^2}\frac{\partial^2}{\partial y^2} + \frac{2}{\ve}\frac{\partial^2}{\partial y \partial x_1} + \Delta_x.
\end{align*}
A substitution of \eqref{eq:series2} into \eqref{eq:originalequation3} 
and collecting similar powers of $\ve$ result in the following equations for the initial powers of $\ve$:
\begin{align*}
\ve^{-2}: \quad &  -\frac{\partial^2 u^0_-}{\partial y^2} = 0,     \\
\ve^{-1}: \quad &  -\frac{\partial^2 u^1_-}{\partial y^2} = 2\frac{\partial^2 u^0_-}{\partial y \partial x_1},     \\
\ve^{0}: \quad &  -\frac{\partial^2 u^2_-}{\partial y^2} = 2\frac{\partial^2 u^1_-}{\partial y \partial x_1} + \Delta_x u^0_- + f.
\end{align*}
The equation for power $\ve^{-2}$ suggests that $u^0_-$ is independent of $y$.
By the periodicity of $u^1_-$, the compatibility condition for $u^2_-$ reads
\begin{align*}
-\Delta u^0_- & = f \quad \text{ in } \Omega_-.
\end{align*}

In order $u^0 = \chi_{\Omega_-} u^0_- + \chi_{\Omega_+} u^0_+$ to be globally defined, an interface condition is needed on
$\Gamma_- = \partial \Omega_- \cap \partial \Omega_+$.
If one requires continuity of $u^0$ and its flow on $\Gamma_-$, one must have $u^0_+ = u^0_-$ on $\Gamma_-$, as well as 
\begin{align*}
\Big(
\begin{matrix}
1 & 0 \\ 0 & h
\end{matrix}
\Big)
\nabla u^0_+ \cdot \nu - \nabla u^0_- \cdot \nu = 0,
\end{align*}
where $\nu$ is one of the unit normals on $\Gamma_-$, and the upper left entry of the matrix in the first term is arbitrary.
This condition becomes explicitly, 
\begin{align*}
h \frac{\partial u^0_+}{\partial x_2} \nu_2 - \nabla u^0_- \cdot \nu = 0,
\end{align*}
which may be expressed as 
\begin{align*}
[hA^0 \nabla u^0 \cdot \nu] = 0 \quad \text{ on } \Gamma_-,
\end{align*}
where $[\cdot]$ denotes the jump on $\Gamma_-$, and $A^0$ is given by~\eqref{eq:effmatrix}.

\section{Example of behavior of the solutions in a degenerating case}\label{sec:zigzag}

The solution $u^0$ to the homogenized problem~\eqref{eq:limitproblem} may belong to $H^1(\Omega)$
even if $h(x)$ tends to zero as $x$ approaches
$M = \{ x : h(x) = 0 \} \subset \partial \Omega$, as the following example illustrates.
Consider the case $f = 1$ with $\eta(x,y) = \hat \eta(y)$ the piecewise linear function 
given by $\hat \eta(0) = 1$ and $\hat \eta(1/2) = 2$, that is
$\hat \eta(y) = 2 - 2|y - 1/2|$.
The homogeneous domains are
$\Omega = (0,1) \times (0,2)$, 
$\Omega_+ = (0,1) \times (1,2)$,
$\Omega_- = (0,1) \times (0,1)$,
and the interface is $\Gamma_- = (0,1) \times \{ 1 \}$.
In Figure~\ref{fig:domainz}, illustrations of the domains $\Omega^\ve$ and $\Omega$ are shown.
In this case,
\begin{align*}
h(x) & = 
\begin{cases}
2 - x_2 & \text{ in } \Omega_+, \\
1 & \text{ in } \overline{\Omega_-},
\end{cases}
\end{align*}
and
\begin{align*}
h &= 0 \quad \text{ on } M = \{ x : h(x) = 0 \} = (0,1) \times \{ 2 \}, \\
h &= 1 \quad \text{ on } \Gamma_-.
\end{align*}
The solution to the homogenized problem \eqref{eq:limitproblem} is
\begin{align*}
u^0(x) & = 
\begin{cases}
1/4 + x_2 - x_2^2/4 & \text{ in } \Omega_+, \\
3x_2/2 - x_2^2/2 & \text{ in } \overline{\Omega_-}.
\end{cases}
\end{align*}
In particular, $u^0 \in H^1(\Omega)$ and it has continuous gradient over the interface $\Gamma_-$.

\begin{figure}[!hb] 
    \centering
    \begin{minipage}{.5\textwidth}
        \centering
        \includegraphics[trim=35mm 10mm 15mm 0mm, clip,height=6.25cm]{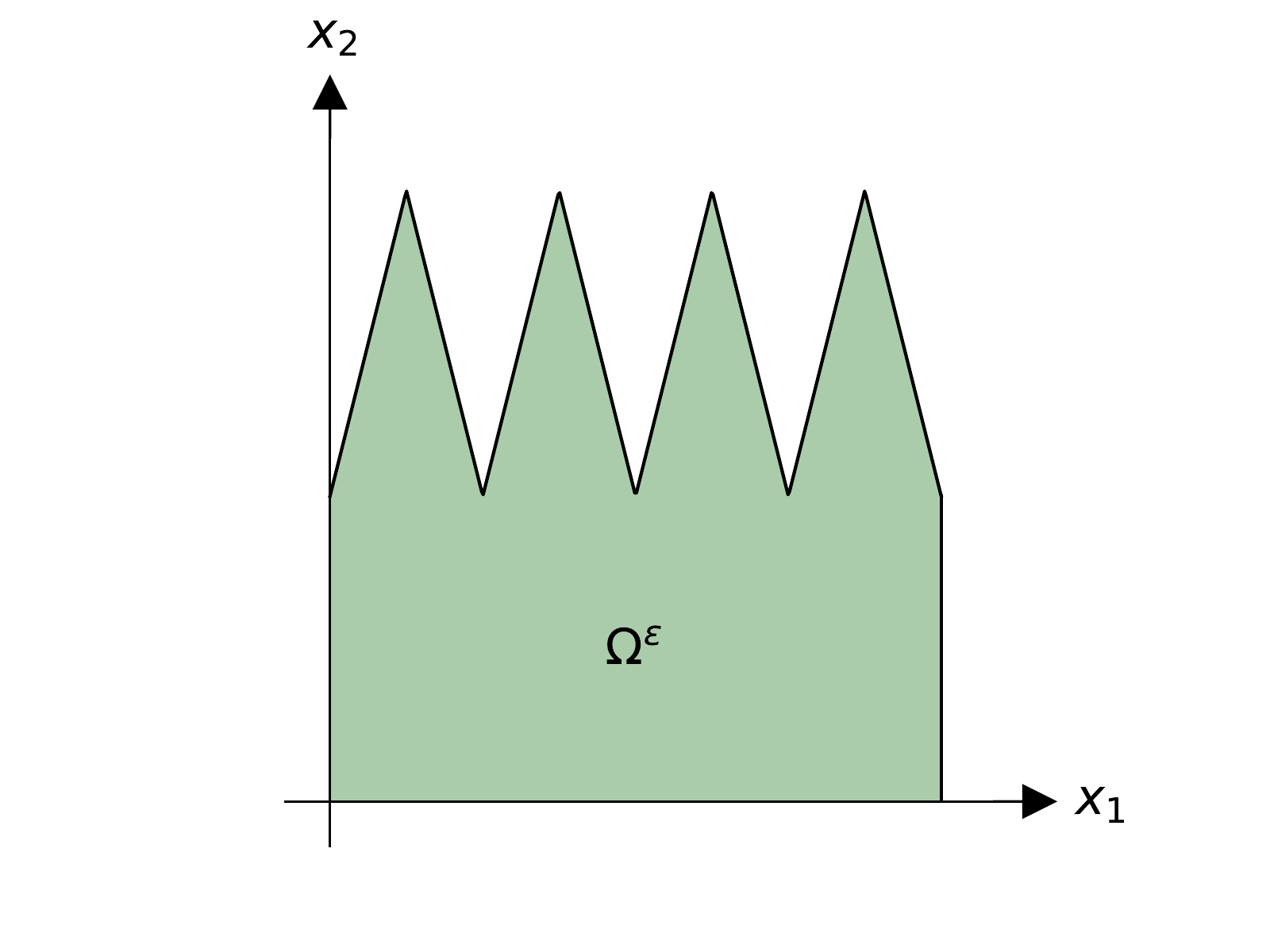}\\
        (a)
    \end{minipage}%
    \begin{minipage}{0.5\textwidth}
        \centering
        \includegraphics[trim=35mm 10mm 15mm 0mm, clip,height=6.25cm]{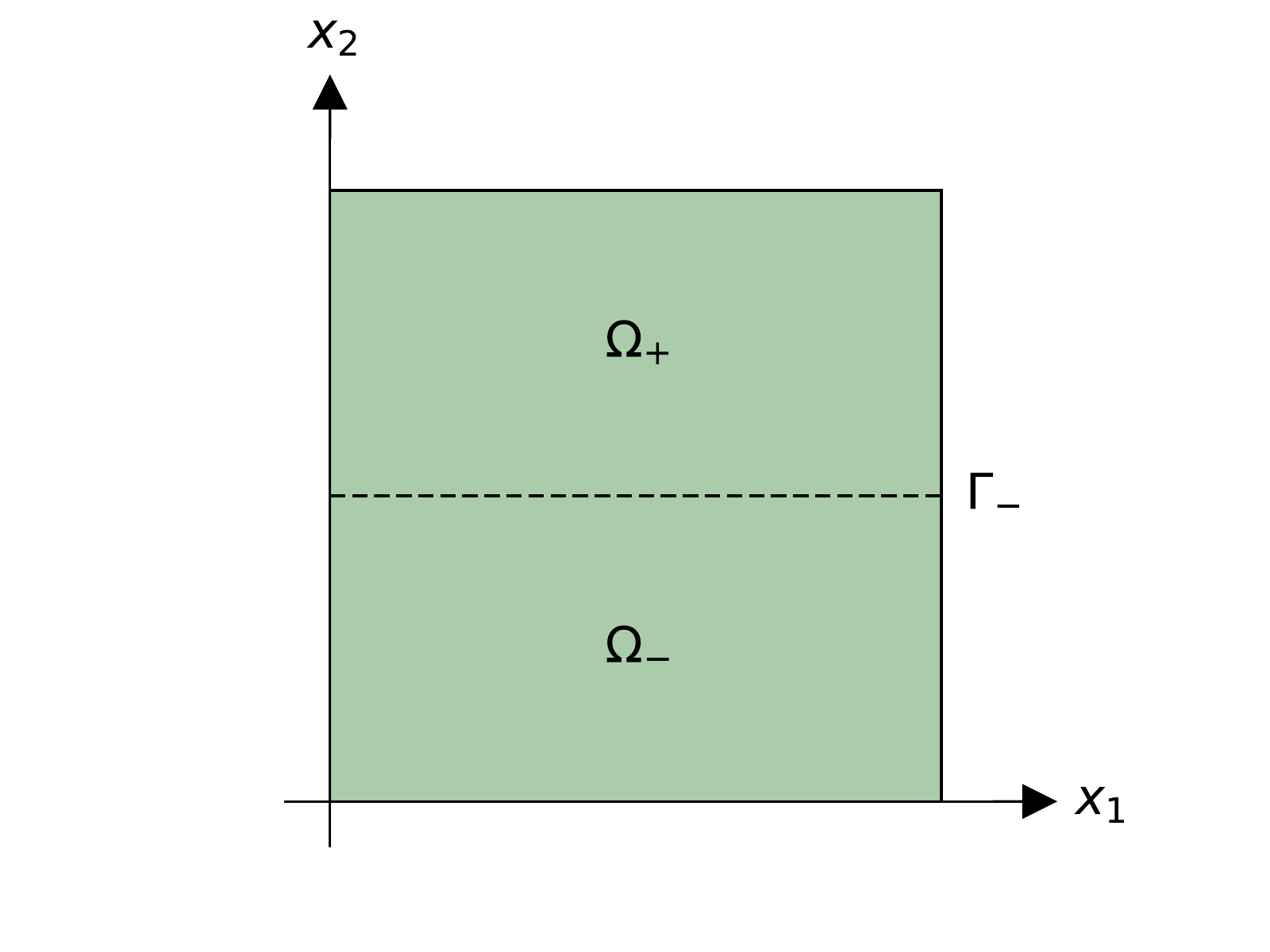}\\
        (b)
    \end{minipage}
    \caption{A locally periodic domain $\Omega^\ve$ (a), $\ve = 1/4$, and the corresponding homogeneous domain $\Omega$ (b),
                 with interface $\Gamma_-$ marked as a dashed line separating the regions $\Omega_+$ and $\Omega_-$.}
    \label{fig:domainz}
\end{figure}

\section{Periodic unfolding}\label{sec:unfolding}

The only apparent possible cause of oscillations in the solutions to \eqref{eq:originalproblem} and their flows 
is the periodicity in the domain, in the $x_1$ direction.
For the study of these oscillations we will use periodic unfolding.

The periodic unfolding at rate $\ve$ of a function $v : \mathbb{R}^2 \to \mathbb{R}$ along $x_1$ is
\begin{align*}
(T^\ve v)(x,y) & = v \big( \ve \big[\frac{x_1}{\ve}\big]+\ve y,x_2\big),
\end{align*}
where $[\cdot]$ denotes the integer part, and $v$ is extended by zero when necessary.
Using this change of variables for the periodic coefficient $\chi_{\Omega^\ve_+}$ in \eqref{eq:originalproblem}, 
the characteristic function of the region of $\Omega^\ve$ where coefficients are periodic,
\begin{align*}
\Omega^\varepsilon_+ & = \big\{ x \in \mathbb{R}^2 : 0 < x_1 < 1 ,\,\, \eta_-(x_1) < x_2 < \eta\big(x_1, \frac{x_1}{\ve}\big) \big\},
\end{align*}
gives $\chi_{\Omega^\ve_u}$, the characteristic function of the domain
\begin{align*}
\Omega^\ve_u & = \big\{ (x,y) \in \mathbb{R}^2 \times \mathbb{T} : 0 < x_1 < 1 ,\,\, \eta_-\big(\ve \big[\frac{x_1}{\ve}\big] + \ve y \big) < x_2 < \eta\big(\ve \big[\frac{x_1}{\ve}\big] + \ve y, y\big) \big\}.
\end{align*}
There holds
\begin{align}\label{eq:strong2scale}
T^\ve \chi_{\Omega^\ve_+} & = \chi_{\Omega^\ve_u} \to \chi_{\Omega_u} \quad \text{ strongly in } L^p(\mathbb{R}^2 \times \mathbb{T}), \quad 1 \le p < \infty, 
\end{align}
where
\begin{align*}
\Omega_u & = \{ (x,y) \in \mathbb{R}^2 \times \mathbb{T} : 0 < x_1 < 1 , \,\, \eta_- (x_1 ) < x_2 < \eta(x_1, y) \} \\
& = \{ (x,y) \in \mathbb{R}^2 \times \mathbb{T} : x \in \Omega_+ , \,\, y \in Y(x) \}.
\end{align*}

The property~\eqref{eq:strong2scale} is the strong unfolding convergence of the sequence and it will be used in
the passage from periodic domain to a fixed domain in integrals.
It expresses that $\chi_{\Omega_+^\ve}$ converges weakly in $L^p(\mathbb{R}^2)$ while not strongly, and that the oscillation spectrum of the sequence belongs to the integers if not empty.
To obtain~\eqref{eq:strong2scale} one uses the almost everywhere pointwise convergence of $\chi_{\Omega^\ve_u}$ to $\chi_{\Omega_u}$ and the Lebesgue dominated
convergence theorem, or views it as a consequence of Lemma~\ref{lm:unfolding} below.

The cost of replacing in integrals the $\ve$ depending unfolded domain $\Omega^\ve_u$ with the fixed domain $\Omega_u$ is described by the following lemma.

\begin{lemma}\label{lm:unfolding}
Let $\Omega$ contain $\Omega^\ve_+$.
Suppose that $\| v^\ve \|_{L^p( \Omega )} \le C$ and $p > 1$.
Then
\begin{align*}
\int_{\Omega^\ve_+} v^\ve \,dx & = \int_{\Omega_u} T^\ve v^\ve \,dxdy + O(\ve^{1 - 1/p}),
\end{align*}
as $\ve$ tends to zero.
\end{lemma}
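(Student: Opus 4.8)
The plan is to establish the identity in two stages. First I would prove an \emph{exact} mean-value identity rewriting the integral of $v^\ve$ over the oscillating domain $\Omega^\ve_+$ as the integral of the unfolding $T^\ve v^\ve$ over the $\ve$-dependent unfolded domain $\Omega^\ve_u$. Then I would estimate the error incurred by replacing $\Omega^\ve_u$ with the fixed limit domain $\Omega_u$. The whole $O(\ve^{1-1/p})$ correction enters only in the second stage.

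For the first stage I would use the cell-wise mean-value property of the unfolding along $x_1$. On each cell $I_j = [\,j\ve, (j+1)\ve\,)$ one has $[x_1/\ve] = j$, and the substitution $t = \ve j + \ve y$ gives, for every $w \in L^1$,
\begin{align*}
\int_{I_j \times \mathbb{R}} \int_0^1 (T^\ve w)(x,y) \,dy\,dx = \int_{I_j \times \mathbb{R}} w(x) \,dx.
\end{align*}
Since $\ve = 1/k$, the cells $I_0, \dots, I_{k-1}$ tile $(0,1)$ exactly, so summing over $j$ yields the partial-cell-free identity $\int w \,dx = \int \int_0^1 T^\ve w \,dy\,dx$ on the strip $(0,1) \times \mathbb{R}$. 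Taking $w = \chi_{\Omega^\ve_+} v^\ve$, using that $T^\ve$ is multiplicative, and invoking $T^\ve \chi_{\Omega^\ve_+} = \chi_{\Omega^\ve_u}$ (where the $1$-periodicity of $\eta$ in its second slot collapses $[x_1/\ve] + y$ to $y$), I obtain the exact relation $\int_{\Omega^\ve_+} v^\ve \,dx = \int_{\Omega^\ve_u} T^\ve v^\ve \,dx\,dy$. Applying the same mean-value property to $|v^\ve|^p$ shows that $T^\ve$ preserves the $L^p$ norm, so $\| T^\ve v^\ve \|_{L^p} = \| v^\ve \|_{L^p} \le C$; this uniform bound is what drives the second stage.

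For the second stage I would write the remaining difference as
\begin{align*}
\int_{\Omega^\ve_u} T^\ve v^\ve \,dx\,dy - \int_{\Omega_u} T^\ve v^\ve \,dx\,dy = \int \big( \chi_{\Omega^\ve_u} - \chi_{\Omega_u} \big) T^\ve v^\ve \,dx\,dy,
\end{align*}
and apply H\"older's inequality with conjugate exponent $p' = p/(p-1)$ to bound its modulus by $\| \chi_{\Omega^\ve_u} - \chi_{\Omega_u} \|_{L^{p'}} \| T^\ve v^\ve \|_{L^p} \le C \, |\Omega^\ve_u \,\triangle\, \Omega_u|^{1/p'}$, where $\triangle$ denotes symmetric difference. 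Since $1/p' = 1 - 1/p$, it suffices to show $|\Omega^\ve_u \,\triangle\, \Omega_u| = O(\ve)$. The two sets differ only through the replacement of $x_1$ by $\xi := \ve[x_1/\ve] + \ve y$ in the arguments of $\eta_-$ and $\eta$; but $x_1$ and $\xi$ lie in the same cell of length $\ve$, so $|x_1 - \xi| < \ve$, and the Lipschitz continuity of $\eta_-$ and $\eta$ displaces both the lower and upper $x_2$-boundaries by at most $L\ve$ pointwise in $(x_1,y)$. Integrating the resulting $x_2$-thickness, bounded by $2L\ve$, over the unit-measure parameter set $(x_1,y) \in (0,1) \times \mathbb{T}$ gives $|\Omega^\ve_u \,\triangle\, \Omega_u| \le 2L\ve$, and the claim follows.

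I expect the symmetric-difference estimate of the last paragraph to be the main obstacle, since it is precisely there that the Lipschitz hypothesis on $\eta$ enters essentially and where one must check that the rate $\ve$ survives the H\"older split as $\ve^{1-1/p}$. It is worth noting that the first stage is exact only because $\ve = 1/k$ aligns the period with $(0,1)$; had the domain not been so aligned, an additional boundary-cell term of the same order would appear and would be handled by the same H\"older-plus-measure argument.
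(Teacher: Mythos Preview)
Your proposal is correct and follows essentially the same route as the paper: the exact mean-value identity $\int_{\Omega^\ve_+} v^\ve \,dx = \int_{\Omega^\ve_u} T^\ve v^\ve \,dxdy$ via $T^\ve \chi_{\Omega^\ve_+} = \chi_{\Omega^\ve_u}$, then H\"older on the difference $\int (\chi_{\Omega^\ve_u} - \chi_{\Omega_u}) T^\ve v^\ve \,dxdy$ together with the Lipschitz estimate $|\Omega^\ve_u \,\triangle\, \Omega_u| = O(\ve)$. You are slightly more explicit than the paper in justifying the isometry $\|T^\ve v^\ve\|_{L^p} = \|v^\ve\|_{L^p}$ and in quantifying the symmetric-difference bound, but the argument is the same.
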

\begin{proof}
Because $T^\ve \chi_{\Omega_+^\ve} = \chi_{\Omega_u^\ve}$, the discrepancy can be computed as follows:
\begin{align*}
\int_{\Omega_+^\ve} v^\ve \,dx - \int_{\Omega_u} T^\ve v^\ve \,dxdy
& = \int_{\Omega \times \mathbb{T}} T^\ve \chi_{\Omega_+^\ve} T^\ve v^\ve \,dxdy - \int_{\Omega_u} T^\ve v^\ve \,dxdy \\
& = \int_{\Omega \times \mathbb{T}} (\chi_{\Omega^\ve_u} - \chi_{\Omega_u}) T^\ve v^\ve \,dxdy.
\end{align*}
The differences $\Omega_u^\ve \setminus \Omega_u$ and $\Omega_u \setminus \Omega_u^\ve$ are contained in some strip of measure $O(\ve)$:
\begin{align*}
\{ (x,y) \in \mathbb{R}^2 \times \mathbb{T} : \mathrm{dist}( (x,y), \partial \Omega_u) < C\ve \},
\end{align*}
where $C$ may be chosen independent of $\ve$ by the Lipschitz continuity of $\eta$ and $\eta_-$,

By the H{\"o}lder inequality,
\begin{align*}
\Big| \int_{\Omega^\ve_+} v^\ve \,dx - \int_{\Omega_u} T^\ve v^\ve \,dxdy  \Big|
& \le \| \chi_{\Omega_u^\ve} - \chi_{\Omega_u} \|_{L^{ 1/(1 - 1/p) }(\Omega \times \mathbb{T})} \| v^\ve \|_{L^p( \Omega )}
= O(\ve^{(p-1)/p}),
\end{align*}
which gives the desired estimate.
\end{proof}

\section{Homogenization}\label{sec:homogenization}

In this section we establish the homogenization of problem \eqref{eq:originalproblem} to 
\eqref{eq:limitproblem} in the sense of weak convergence of the solutions and their flows.
The method we use is the unfolding Lemma~\ref{lm:unfolding} to pass to the fixed domain $\Omega_u$,
and the weak compactness in $L^2(\Omega_u)$ to characterize the asymptotic behavior of $u^\ve$.

Throughout this section (H1) is assumed to hold.

\begin{theorem}\label{tm:homogenization}
Suppose that (H1) holds.
Let $u^\ve \in H^1(\Omega^\ve, \Gamma)$ be the solutions to \eqref{eq:originalproblem},
and let $u^0 \in W(\Omega, \Gamma)$ be the solution to \eqref{eq:limitproblem}.
Then
\begin{align*}
&\emph{(i)} \quad \widetilde{u^\ve} \rightharpoonup h u^0 \quad \text{ weakly in } L^2(\Omega),\\
&\emph{(ii)} \quad \widetilde{\nabla u^\ve} \rightharpoonup h A^0 \nabla u^0 \quad \text{ weakly in } L^2(\Omega),
\end{align*}
as $\ve$ tends to zero, where $\sim$ denotes extension by zero.
\end{theorem}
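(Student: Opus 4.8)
The plan is to combine a priori energy estimates with the periodic unfolding of Section~\ref{sec:unfolding} and the mean value property relating weak limits of zero extensions to the $y$-averages of two-scale limits over $Y(x)$. First I would test the weak formulation of \eqref{eq:originalproblem} with $u^\ve$ itself to obtain the energy identity $\int_{\Omega^\ve}|\nabla u^\ve|^2\,dx = \int_{\Omega^\ve} f u^\ve\,dx$. Since $u^\ve$ vanishes on the fixed bottom $\Gamma$ and $\Omega^\ve \subset \Omega$ has height bounded by $\max\eta_+$, integrating $u^\ve(x_1,\cdot)$ from $\Gamma$ upward yields a Poincar\'e inequality $\|u^\ve\|_{L^2(\Omega^\ve)} \le C\|\partial_{x_2} u^\ve\|_{L^2(\Omega^\ve)}$ with $C$ independent of $\ve$. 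Together with the energy identity this gives $\|u^\ve\|_{H^1(\Omega^\ve)} \le C$ uniformly, so the zero extensions $\widetilde{u^\ve}$ and $\widetilde{\nabla u^\ve}$ are bounded in $L^2(\Omega)$ and converge, along a subsequence, weakly to limits $U$ and $G=(G_1,G_2)$ to be identified.

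Next I would unfold and determine the structure of the two-scale limits. Applying $T^\ve$ and using the bounds, I extract a two-scale limit $\hat u$ of $u^\ve$ on $\Omega_u$ and limits $\hat\xi_1,\hat\xi_2$ of the two gradient components. Because $T^\ve\partial_{x_1}u^\ve=\tfrac1\ve\partial_y T^\ve u^\ve$ is bounded, one gets $\partial_y\hat u=0$, and by the connectedness of $Y(x)$ under (H1) the limit $\hat u$ is independent of $y$; call it $u^0(x)$. The mean value property of Section~\ref{sec:unfolding} then yields $U(x)=\int_{Y(x)}\hat u\,dy=h(x)u^0(x)$, which is claim~(i) once $u^0$ is identified as the homogenized solution. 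Since the unfolding in $x_1$ commutes with $\partial_{x_2}$, one has $T^\ve\partial_{x_2}u^\ve=\partial_{x_2}T^\ve u^\ve\rightharpoonup\partial_{x_2}u^0$, whence $G_2=\int_{Y(x)}\partial_{x_2}u^0\,dy=h\,\partial_{x_2}u^0=(hA^0\nabla u^0)_2$. For the fast component I write the standard decomposition $\hat\xi_1=\partial_{x_1}u^0+\partial_y u^1$ for a corrector $u^1(x,\cdot)$.

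The main step is to pass to the limit in the weak formulation and identify $G_1$, using Lemma~\ref{lm:unfolding} and the strong unfolding convergence~\eqref{eq:strong2scale}. Testing with non-oscillating $\psi$ recovers the weak form of~\eqref{eq:limitproblem} with flux $G$, while testing with oscillating correctors $\ve\psi(x)\phi(x_1/\ve)$, $\phi$ periodic, yields the cell problem $\partial_y\hat\xi_1=0$ in $Y(x)$ with vanishing flux at $\partial Y(x)$ coming from the Neumann condition on the oscillating boundary. In the fixed region $\Omega_-$ one has $Y(x)=\mathbb{T}$, and periodicity forces $\int_{Y(x)}\hat\xi_1\,dy=\partial_{x_1}u^0$; in the oscillating region $\Omega_+$, hypothesis (H1) makes $Y(x)$ a single proper arc that does not wrap around $\mathbb{T}$, so the corrector can be chosen to annihilate the fast flux and the insulated endpoints force $\int_{Y(x)}\hat\xi_1\,dy=0$. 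Hence $G_1=\chi_{\Omega_-}\,h\,\partial_{x_1}u^0=(hA^0\nabla u^0)_1$, giving claim~(ii) with $A^0$ as in~\eqref{eq:effmatrix}. I expect this cell-problem step in $\Omega_+$ to be the principal obstacle: it is where the oscillating geometry and the Neumann endpoints must be handled carefully to produce zero effective $x_1$-conductivity, and where (H1) is essential to guarantee a single arc so that the effective matrix decouples into the clean diagonal form.

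Finally, since the homogenized problem~\eqref{eq:limitproblem} has a unique solution $u^0\in W(\Omega,\Gamma)$, all subsequential limits coincide, and therefore the whole sequence converges, completing the proof of (i) and (ii).
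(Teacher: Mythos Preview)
Your overall strategy---uniform energy bounds, unfolding, extracting weak two-scale limits, then identifying the $y$-averages---matches the paper's. However, there are genuine gaps in the identification of $G_1$ in $\Omega_+$ and in showing that the limit actually lies in $W(\Omega,\Gamma)$.

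First, the decomposition $\hat\xi_1=\partial_{x_1}u^0+\partial_y u^1$ is not available here. That representation is the standard two-scale compactness result for sequences bounded in $H^1$ of a \emph{fixed} domain; in the oscillating region $\Omega_+$ the limit $u^0$ lives only in $W(\Omega,\Gamma)$, which imposes no control on $\partial_{x_1}u^0$ at all (the effective matrix $A^0$ kills that component), so $\partial_{x_1}u^0$ need not exist in $L^2(\Omega_+)$. The paper therefore never decomposes: it names the weak limit of $T^\ve\partial_{x_1}u^\ve$ simply $p\in L^2(\Omega_u)$ and proves directly that $\int_{Y(x)}p\,dy=0$.

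Second, your oscillating test functions $\ve\psi(x)\phi(x_1/\ve)$ with $\phi$ \emph{periodic} yield only $\partial_y p=0$, not the mean-zero property you claim. To obtain $\int_{Y(x)}p\,dy=0$ one needs a sawtooth-type test function whose unfolded gradient limits to $(\phi,0)$; the prototype $\ve\psi(x)\{x_1/\ve\}$ does this but is discontinuous along the vertical lines $x_1=k\ve$, which generically slice through the interior of $\Omega^\ve_+$ and hence are not in $H^1(\Omega^\ve,\Gamma)$. The paper resolves this by shifting the jumps to points $x_k^\ve\in\arg\min_{x\in\ve[k,k+1]}\eta(x,x/\ve)$, so that the discontinuities land (up to $O(\ve)$) on the graph of $\eta_-$ and miss the support of $\phi\in C_0^\infty(\Omega_+)$; this is where (H1) and the Lipschitz regularity of $\eta$ are actually used. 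Your appeal to ``insulated endpoints'' is the right intuition but is not a substitute for this construction.

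Third, you do not verify the transmission condition $u^0_-=u^0_+$ on $\Gamma_-$, which is required to conclude that $\chi_{\Omega_-}u^0_-+\chi_{\Omega_+}u^0_+\in W(\Omega,\Gamma)$ and hence to invoke uniqueness of the homogenized solution. The paper devotes a separate step to this, tracing $u^\ve$ onto $\Gamma_\ve=\Omega^\ve\cap\Gamma_-$ from above and below and passing to the limit; without it the identification of the limit as the \emph{unique} solution of \eqref{eq:limitproblem}, and thus the convergence of the full sequence, is incomplete.
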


The convergence of the flows in Theorem~\ref{tm:homogenization}(ii) means that $u^\ve$ converges weakly
to $u^0$ in $H^1(\Omega_-, \Gamma)$, and strongly in $L^2(\Omega_-)$ by the Relich theorem.

\begin{lemma}\label{lm:uveestimate}
For any $\ve = 1/k$, $k = 1, 2, \ldots$, 
there exists a unique solution $u^\ve \in H^1(\Omega^\ve, \Gamma)$ to \eqref{eq:originalproblem}.
For the solutions $u^\ve$, the following a priori estimate holds:
\begin{align*}
\| u^\ve \|_{H^1(\Omega^\ve,\,\Gamma)} \le C,
\end{align*}
where $C$ is independent of $\ve$.
\end{lemma}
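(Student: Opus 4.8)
The plan is to obtain existence and uniqueness from the Lax--Milgram theorem applied to the weak form of \eqref{eq:originalproblem}, and then to read off the a priori estimate from a Poincar\'e inequality whose constant I will show is independent of $\ve$. The weak formulation is to find $u^\ve \in H^1(\Omega^\ve, \Gamma)$ such that
\[
\int_{\Omega^\ve} \nabla u^\ve \cdot \nabla v \, dx = \int_{\Omega^\ve} f v \, dx
\]
for all $v \in H^1(\Omega^\ve, \Gamma)$. The bilinear form on the left is manifestly bounded; its coercivity on $H^1(\Omega^\ve, \Gamma)$ reduces to a Poincar\'e inequality since only the gradient is present, and the right-hand side is a bounded linear functional because $\| f \|_{L^2(\Omega^\ve)} \le \| f \|_{L^2(\Omega)}$, which is already $\ve$-independent.

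The heart of the matter is a Poincar\'e inequality with a constant uniform in $\ve$. I would exploit that every $\Omega^\ve$ shares the same Dirichlet segment $\Gamma = (0,1) \times \{0\}$ and integrate along vertical lines. For $x = (x_1, x_2) \in \Omega^\ve$ the vanishing trace on $\Gamma$ gives $u^\ve(x_1, x_2) = \int_0^{x_2} \partial_{x_2} u^\ve(x_1, s)\,ds$, so by Cauchy--Schwarz $|u^\ve(x_1, x_2)|^2 \le x_2 \int_0^{x_2} |\partial_{x_2} u^\ve(x_1, s)|^2\,ds$. Using $x_2 < \eta(x_1, x_1/\ve) \le M$, where $M = \max_{[0,1] \times \mathbb{T}} \eta < \infty$ since $\eta$ is continuous on a compact set, and integrating first over the vertical fiber and then over $x_1 \in (0,1)$, I obtain
\[
\int_{\Omega^\ve} |u^\ve|^2\,dx \le M^2 \int_{\Omega^\ve} |\partial_{x_2} u^\ve|^2\,dx \le M^2 \int_{\Omega^\ve} |\nabla u^\ve|^2\,dx.
\]
The point is that $M$ depends only on $\eta$ and not on $\ve$: the oscillation enters solely through the argument $x_1/\ve$ of $\eta$ and does not change its range.

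With the uniform Poincar\'e inequality in hand, the bilinear form is coercive with an $\ve$-independent constant, so Lax--Milgram yields a unique $u^\ve \in H^1(\Omega^\ve, \Gamma)$ for each $\ve = 1/k$. For the a priori bound I would test with $v = u^\ve$, giving $\| \nabla u^\ve \|_{L^2(\Omega^\ve)}^2 = \int_{\Omega^\ve} f u^\ve \le \| f \|_{L^2(\Omega)} \| u^\ve \|_{L^2(\Omega^\ve)} \le M \| f \|_{L^2(\Omega)} \| \nabla u^\ve \|_{L^2(\Omega^\ve)}$ by the Poincar\'e inequality, whence $\| \nabla u^\ve \|_{L^2(\Omega^\ve)} \le M \| f \|_{L^2(\Omega)}$ and, applying Poincar\'e once more, $\| u^\ve \|_{H^1(\Omega^\ve, \Gamma)} \le C \| f \|_{L^2(\Omega)}$ with $C$ depending only on $M$. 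The only genuine subtlety, and the step I would be most careful about, is the $\ve$-independence of the Poincar\'e constant; for each fixed $\ve$ existence and uniqueness is routine, but the uniformity is exactly what makes the estimate usable in the homogenization limit, and it rests on the uniform boundedness of $\eta$ rather than on any finer structure of the domain.
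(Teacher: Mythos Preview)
Your proof is correct and follows essentially the same approach as the paper: variational formulation, a vertical-line Poincar\'e inequality with constant $(\max \eta)^2$ independent of $\ve$, existence/uniqueness via Lax--Milgram (the paper invokes Riesz, which amounts to the same thing here since the form is symmetric), and the a priori bound from testing with $u^\ve$. You in fact give more detail on the Poincar\'e step than the paper does.
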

\begin{proof}
The variational form of \eqref{eq:originalproblem} is: Find $u^\ve \in H^1(\Omega^\ve, \Gamma)$ such that
\begin{align}\label{eq:variationaloriginal}
\int_{\Omega^\ve} \nabla u \cdot \nabla \psi \,dx & = \int_{\Omega^\ve} f \psi \,dx,
\end{align}
for all $\psi \in H^1(\Omega^\ve, \Gamma)$.
Using the Poincar\'e inequality
\begin{align*}
\int_{\Omega^\ve} v^2 \,dx \le (\max \eta_+)^2 \int_{\Omega^\ve} \Big( \frac{\partial v}{\partial x_2} \Big)^2 \,dx, \quad v \in H^1(\Omega^\ve, \Gamma),
\end{align*}
one verifies that left hand side in \eqref{eq:variationaloriginal}
is an inner product on $H^1(\Omega^\ve, \Gamma)$.
The right hand side in \eqref{eq:variationaloriginal} is a bounded linear functional
on $H^1(\Omega^\ve, \Gamma)$.
The Riesz theorem guarantees the existence of a unique
solution $u^\ve \in H^1(\Omega^\ve, \Gamma)$. 
Using $u^\ve$ as a test function in \eqref{eq:variationaloriginal} gives
\begin{align*}
\| u^\ve \|_{H^1(\Omega^\ve, \,\Gamma)} & \le C \| f \|_{L^2( \Omega )},
\end{align*}
uniformly in $\ve$ by the uniform Poincar{\'e} constant, from which the desired a priori estimate is obtained.
\end{proof}

\begin{lemma}\label{lm:Tveuveestimate}
For the sequence of solutions $u^\ve$ to \eqref{eq:originalproblem}, the following a priori estimates hold
for the unfolded sequences:
\begin{align*}
\| T^\ve u^\ve \|_{ L^2(\Omega_u) } & \le C, \\
\| T^\ve \nabla u^\ve \|_{ L^2(\Omega_u) } & \le C,
\end{align*}
where $C$ is independent of $\ve$.
\end{lemma}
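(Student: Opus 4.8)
The plan is to reduce both estimates to the a priori bound $\|u^\ve\|_{H^1(\Omega^\ve,\Gamma)}\le C$ of Lemma~\ref{lm:uveestimate}, by exploiting the fact that the unfolding operator $T^\ve$ preserves the $L^2$ norm \emph{exactly}. First I would establish the mean value identity that for every $w \in L^1(\mathbb{R}^2)$, extended by zero where necessary,
\[
\int_{\mathbb{R}^2\times\mathbb{T}} T^\ve w \,dx\,dy = \int_{\mathbb{R}^2} w \,dx .
\]
This follows by decomposing $\mathbb{R}$ in the $x_1$ variable into the periodicity cells $[\ve k,\ve(k+1))$, $k\in\mathbb{Z}$: on each such cell $[x_1/\ve]=k$ is constant, so the change of variables $t=\ve k+\ve y$ turns $\int_{\ve k}^{\ve(k+1)}\!\int_0^1 w(\ve k+\ve y,x_2)\,dy\,dx_1$ into $\int_{\ve k}^{\ve(k+1)} w(t,x_2)\,dt$, and summing over $k$ and integrating in $x_2$ gives the claim. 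No boundary-layer error appears because the sum runs over \emph{all} cells covering $\mathbb{R}$; this is the exact counterpart of the approximate integration formula of Lemma~\ref{lm:unfolding}.

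With this identity in hand, I would note that $T^\ve$ is a pointwise substitution and hence commutes with squaring, $T^\ve(w^2)=(T^\ve w)^2$. Applying the identity to $w=(\widetilde{u^\ve})^2$ gives $\|T^\ve u^\ve\|_{L^2(\mathbb{R}^2\times\mathbb{T})}=\|u^\ve\|_{L^2(\Omega^\ve)}$, and applying it to each scalar component $\partial u^\ve/\partial x_i$ and summing gives $\|T^\ve \nabla u^\ve\|_{L^2(\mathbb{R}^2\times\mathbb{T})}=\|\nabla u^\ve\|_{L^2(\Omega^\ve)}$. Since $\Omega_u\subset\mathbb{R}^2\times\mathbb{T}$, restriction to $\Omega_u$ can only decrease these norms, so
\[
\|T^\ve u^\ve\|_{L^2(\Omega_u)}\le\|u^\ve\|_{L^2(\Omega^\ve)},\qquad \|T^\ve \nabla u^\ve\|_{L^2(\Omega_u)}\le\|\nabla u^\ve\|_{L^2(\Omega^\ve)} .
\]
Both right-hand sides are bounded by $\|u^\ve\|_{H^1(\Omega^\ve,\Gamma)}\le C$ from Lemma~\ref{lm:uveestimate}, yielding the two asserted estimates with a constant independent of $\ve$.

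The only delicate point is the bookkeeping of the extension-by-zero convention in the definition of $T^\ve$: one must check that $T^\ve u^\ve$ (with $u^\ve$ extended by zero outside $\Omega^\ve$) coincides with $T^\ve\widetilde{u^\ve}$ and is genuinely square integrable over $\mathbb{R}^2\times\mathbb{T}$, which holds since $u^\ve$ is supported in the bounded strip $(0,1)\times(0,\max\eta_+)$ and, for $\ve=1/k$, the unfolded first coordinate $\ve[x_1/\ve]+\ve y$ stays inside $(0,1)$. I would emphasize that one should \emph{not} route the argument through Lemma~\ref{lm:unfolding} applied to $(u^\ve)^2$: that would require a uniform bound on $\|u^\ve\|_{L^{2p}(\Omega)}$ with $p>1$, i.e.\ uniform higher integrability, which is precisely what is awkward to obtain on the oscillating domains. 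The exact norm identity sidesteps this difficulty entirely, which is why it, rather than the approximate formula, is the right tool here.
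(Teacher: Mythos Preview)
Your proof is correct and takes essentially the same approach as the paper: both use the exact $L^2$-norm preservation of the unfolding operator, restrict to $\Omega_u$, and invoke the uniform $H^1$ bound of Lemma~\ref{lm:uveestimate}. The paper localizes to $\Omega_+^\ve$ and its unfolded counterpart $\Omega_u^\ve$ before comparing with $\Omega_u$, whereas you work globally on $\mathbb{R}^2\times\mathbb{T}$ and then restrict; your version is slightly cleaner in that it avoids tracking where $T^\ve u^\ve$ vanishes.
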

\begin{proof}
By the definition of unfolding,
\begin{align*}
\int_{\Omega^\ve_+} (u^\ve)^2 \, dx & =
\int_{\Omega_u^\ve} (T^\ve u^\ve)^2 \, dxdy
 = \int_{\Omega_u} (T^\ve u^\ve)^2 \, dxdy
+ \int_{\Omega_u^\ve \setminus \Omega_u} (T^\ve u^\ve)^2 \, dxdy,
\end{align*}
because $T^\ve u^\ve$ is zero outside $\Omega_u^\ve$. By using the estimate for the solutions $u^\ve$ in Lemma~\ref{lm:uveestimate}, the estimate $\| T^\ve u^\ve \|_{L^2(\Omega_u)} \le C$ is obtained.
The same computation with $|\nabla u^\ve|$ in place of $u^\ve$ gives the estimate $\| T^\ve \nabla u^\ve \|_{L^2(\Omega_u)} \le C$.
\end{proof}

Regarding the function space, $W(\Omega, \Gamma)$ is associated to the homogenized problem~\eqref{eq:limitproblem}.
In the cover $\Omega_+$ of the periodic region of $\Omega^\ve$,
the $H^1(\Omega)$ ellipticity of the operator to the homogenized problem \eqref{eq:limitproblem}
may be violated.
For 
\begin{align*}
h A^0 \xi \cdot \xi & = 
\begin{cases}
h \xi_2^2 \quad  \text{ in } \overline{\Omega_+}, \\
|\xi|^2 \quad \text{ in } \Omega_-,
\end{cases}
\end{align*}
and $h(x)$ tends to zero as $x$ approaches $M = \{ x : h(x) = 0 \}$, which might be nonempty.
The properties
\begin{align*}
& h(x) > 0 \quad \text{in } \Omega, \\
& h, h\inv \in L^1_{\mathrm{loc}}(\Omega),
\end{align*}
ensure that $W(\Omega, \Gamma)$ is a Hilbert space when equipped with the 
inner product $(u,v) = \int_\Omega A^0 \nabla u \cdot \nabla v \,dx$, and that $C^\infty_0(\Omega)$ is embedded into $W(\Omega, \Gamma)$. We denote by $C^\infty(\Omega, \Gamma)$, the $C^\infty(\Omega)$ functions vanishing in a neighborhood of $\Gamma$.

The existence of a solution $u^0 \in W(\Omega, \Gamma)$ to $\eqref{eq:limitproblem}$ is obtained by
weak compactness in the proof of Theorem~\ref{tm:homogenization} below, with uniqueness by linearity.
One might also obtain it in the direct way as follows.

\begin{lemma}\label{lm:limitlemma}
There exists a unique solution $u^0 \in W(\Omega, \Gamma)$ to $\eqref{eq:limitproblem}$.
\end{lemma}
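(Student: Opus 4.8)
The plan is to recast the homogenized problem \eqref{eq:limitproblem} as a variational equation on $W(\Omega, \Gamma)$ and to apply the Lax--Milgram lemma (equivalently, since the form is symmetric, the Riesz representation theorem). Testing $-\mop{div}(h A^0 \nabla u) = h f$ against $\psi \in W(\Omega, \Gamma)$ and integrating by parts, the boundary term vanishes because $\psi = 0$ on $\Gamma$ and $h A^0 \nabla u \cdot \nu = 0$ on $\partial \Omega \setminus \Gamma$, so the weak formulation is: find $u^0 \in W(\Omega, \Gamma)$ with
\begin{align*}
a(u^0, \psi) := \int_\Omega h A^0 \nabla u^0 \cdot \nabla \psi \, dx = \int_\Omega h f \psi \, dx
\end{align*}
for all $\psi \in W(\Omega, \Gamma)$. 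Boundedness of the symmetric bilinear form $a$ and of the right-hand side functional follows from the Cauchy--Schwarz inequality in $L^2(\Omega, h)$, so the whole argument reduces to proving coercivity of $a$.

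Coercivity rests on a weighted Poincar\'e inequality
\begin{align*}
\int_\Omega h v^2 \, dx \le C \int_\Omega h A^0 \nabla v \cdot \nabla v \, dx, \quad v \in W(\Omega, \Gamma),
\end{align*}
and establishing it is the step I expect to be the main obstacle. The idea is to integrate along vertical segments, using that $v$ vanishes on $\Gamma = (0,1) \times \{ 0 \}$: writing $v(x_1, x_2) = \int_0^{x_2} \partial_s v(x_1, s) \, ds$ and applying Cauchy--Schwarz gives $v(x_1, x_2)^2 \le x_2 \int_0^{x_2} (\partial_s v(x_1, s))^2 \, ds$. The weight is then handled using the monotonicity of $h$ in its second argument: since $Y(x) = \{ y : x_2 < \eta(x_1, y) \}$ shrinks as $x_2$ increases, the map $x_2 \mapsto h(x_1, x_2)$ is non-increasing, so $h(x_1, x_2) \le h(x_1, s)$ for $s \le x_2$. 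This lets the outer weight be absorbed into the inner integral; integrating in $x_2$ and then in $x_1$ yields the inequality with $C = (\max \eta_+)^2 / 2$. Crucially, $A^0 \nabla v \cdot \nabla v \ge (\partial v / \partial x_2)^2$ everywhere in $\Omega$, so only the vertical derivative, which is present in $A^0$ throughout, is used; this is exactly why the form that is degenerate and $x_1$-insulated in $\Omega_+$ still controls the full weighted $L^2$ norm.

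With this inequality, coercivity $a(v, v) \ge c \| v \|_{W(\Omega, \Gamma)}^2$ is immediate, and since $f \in L^2(\Omega)$ together with $h \le 1$ gives $f \in L^2(\Omega, h)$, the functional $\psi \mapsto \int_\Omega h f \psi \, dx$ is bounded by $\| f \|_{L^2(\Omega, h)} \| \psi \|_{L^2(\Omega, h)} \le C \| f \|_{L^2(\Omega, h)} \| \psi \|_{W(\Omega, \Gamma)}$, using the weighted Poincar\'e inequality once more. The Lax--Milgram lemma, applied on the Hilbert space $W(\Omega, \Gamma)$ whose completeness was recorded above, then provides a unique $u^0 \in W(\Omega, \Gamma)$ solving the weak formulation, which is the asserted unique solution to \eqref{eq:limitproblem}.
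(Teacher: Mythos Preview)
Your proposal is correct and follows the same route as the paper: write the variational form, use a Poincar\'e inequality to see that the bilinear form is an inner product on $W(\Omega,\Gamma)$, and conclude by Riesz/Lax--Milgram. The paper's proof merely invokes ``the H{\"o}lder and Poincar\'e inequalities'' without detail, whereas you supply the weighted Poincar\'e inequality explicitly via integration in $x_2$ and the monotonicity of $h(x_1,\cdot)$; this is a welcome elaboration but not a different approach.
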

\begin{proof}

The variational form of \eqref{eq:limitproblem} is: Find $u^0 \in W(\Omega, \Gamma)$ such that
\begin{align}\label{eq:variationallimit}
\int_{\Omega} hA^0 \nabla u^0 \cdot \nabla \psi \, dx & = \int_\Omega f \psi h \,dx,
\end{align}
for all $\psi \in W(\Omega, \Gamma)$.

The Hilbert space structure on $W(\Omega, \Gamma)$ has been chosen such that 
\begin{align*}
\int_{\Omega} hA^0 \nabla v \cdot \nabla v \, dx & = \| v \|_{W(\Omega, \, \Gamma)}^2.
\end{align*}
By the Hölder and Poincar\'e inequalities, the left hand side of \eqref{eq:variationallimit} defines an inner product on $W(\Omega, \Gamma)$.
The right hand side of \eqref{eq:variationallimit} is a bounded linear functional on $W(\Omega, \Gamma)$.
By the Riesz theorem, there exists a unique $u^0 \in W(\Omega, \Gamma)$ satisfying~\eqref{eq:variationallimit}.
\end{proof}

\begin{proof}[\bf Proof of Theorem~\ref{tm:homogenization}]
From Lemma~\ref{lm:uveestimate} and Lemma~\ref{lm:Tveuveestimate} we have the following a priori estimates for the solutions $u^\ve$
to problem \eqref{eq:originalproblem} and their unfoldings:
\begin{align*}
\| u^\ve \|_{H^1(\Omega^\ve, \, \Gamma)} & \le C,\\
\| T^\ve u^\ve \|_{ L^2(\Omega_u) } & \le C, \\
\| T^\ve \nabla u^\ve \|_{ L^2(\Omega_u) } & \le C.
\end{align*}
By weak compactness, there exist $u^0_- \in H^1(\Omega_-, \Gamma)$,
$u^0_+ \in L^2(\Omega_u)$, $p \in L^2(\Omega_u)$, and a subsequence of $\ve$ which we still denoted by $\ve$, such that
\begin{align}
u^\ve & \rightharpoonup u^0_- \quad  \qquad\quad \text{ weakly in } H^1(\Omega_-, \, \Gamma), \label{eq:weaks1}\\
T^\ve u^\ve & \rightharpoonup u^0_+ \quad \qquad\quad \text{ weakly in } L^2(\Omega_u),  \label{eq:weaks2}\\
T^\ve \nabla u^\ve & \rightharpoonup \big(p, \frac{\partial u^0_+}{\partial x_2} \big) \,\,\quad \text{ weakly in } L^2(\Omega_u), \label{eq:weaks3}
\end{align}
where the equality of second component of the weak limit of $T^\ve \nabla u^\ve$ and $\frac{\partial u^0_+}{\partial x_2}$,
and that $u^0_+$ does not depend on $y$, follow from the boundedness of the sequences and 
\begin{align}\label{eq:dy}
\frac{\partial}{\partial x_2} T^\ve u^\ve & = T^\ve \frac{\partial u^\ve}{\partial x_2}, &
\frac{\partial}{\partial y} T^\ve u^\ve & = \ve T^\ve \frac{\partial u^\ve}{\partial x_1}.
\end{align}
By (H1) and \eqref{eq:dy}, $u^0_+$ does not depend on $y$.\\

\noindent {\bf Claim 1: The average of $p$ in $y$ is zero: $\int_{Y(x)} p \, dy = 0$, a.e. $x \in \Omega_+$.}\\

Information about $p$ may be obtained by using oscillating test functions in the equation for $u^\ve$ (c.f.~\cite{marchenko1964boundary,tartar1977problemes}).
The prototype $\varphi^\ve = \ve \phi(x) \{ x_1/\ve \}$, with $\phi \in C^\infty_0(\Omega_+)$ and $\{ \cdot \}$ denoting fractional part,
would serve the purpose in view of \eqref{eq:W-M1bbb} because $T^\ve \varphi^\ve \to 0$ and 
$T^\ve \nabla \varphi^\ve \to (\phi,0)$ strongly in $L^2(\Omega_u)$.
As $\varphi^\ve$ are not necessarily continuous on $\Omega^\ve_+$, appropriate shifts are introduced as follows.
Note that 
\begin{align*}
\ve \big\{ \frac{x_1}{\ve} \big\} = x_1 - x_k^\ve, \quad \text{ if } x_1 \in [ x_k^\ve, x_{k+1}^\ve ) \text{ and } x^\ve_k = k \ve.
\end{align*}
Replace $x^\ve_k = k\ve$ with a grid where the graphs of $\eta(x_1,x_1/\ve)$ and $\eta_-(x_1)$ are close:
\begin{align}\label{eq:xk}
x^\ve_k & \in  \argmin_{x \in \ve [k,k+1]} \eta\big( x, \frac{x}{\ve} \big), \quad k = 0, \ldots, \frac{1}{\ve} - 1.
\end{align}
Let $\phi \in C^\infty_0(\Omega_+)$. Then with $x_k^\ve$ as in \eqref{eq:xk},
\begin{align}\label{eq:testfn}
\varphi^\ve(x) & = (x_1 - x_k^\ve)\phi(x), \quad \text{ if } x_1 \in [x_k^\ve, x^\ve_{k+1}),
\end{align}
belongs to $C^\infty(\Omega_+^\ve)$ for all small enough $\ve$.
Indeed, 
\begin{align*}
& \eta\big( x_k^\ve, \frac{x_k^\ve}{\ve} \big) - \eta_-(x_k^\ve) \\
& \quad = \min_{x \in \ve[k,k+1]} \eta\big( x, \frac{x}{\ve} \big) - \min_y \eta(x_k^\ve , y) \\
& \quad \le \max_{\xi \in \ve[k,k+1]}\big( \! \min_{y} \eta( \xi, y ) - \min_y \eta(x_k^\ve , y) \big) \\
& \quad \le C\ve,
\end{align*}
by the Lipschitz continuity of $\eta_-$, and $\phi$ is compactly supported.

With $\varphi^\ve$ given by \eqref{eq:xk}, \eqref{eq:testfn} as test functions in the equation \eqref{eq:W-M1bbb} for $u^\ve$,
and using that
\begin{align*}
T^\ve \varphi^\ve & \to 0, \\
T^\ve \nabla \varphi^\ve & \to (\phi, 0),
\end{align*}
strongly in $L^2(\Omega_u)$ as $\ve$ tends to zero because $|x_1 - x_k^\ve| \le \ve$, one obtains in the limit
\begin{align*}
\int_{\Omega_u} p \phi \, dxdy = \int_{\Omega_+} \, \int_{Y(x)} p\, dy \,  \phi \, dx = 0, \quad \phi \in C^\infty_0(\Omega_+). 
\end{align*}
which gives the first claim.\\[5mm]

\noindent \textbf{Claim 2: $\chi_{\Omega_-} u^0_- + \chi_{\Omega_+}u^0_+ \in W(\Omega, \Gamma)$ and is the solution to the homogenized problem~\eqref{eq:limitproblem}.}\\

To verify that $\chi_{\Omega_-} u^0_- + \chi_{\Omega_+}u^0_+ \in W(\Omega, \Gamma)$ it suffices to check that $u^0_- = u^0_+$ in $L^2(\Gamma_-)$.
Note that $u^0_+ \in L^2(\Omega_u)$ is continuously traced into $L^2(\Gamma_-)$ because $h > 0$ in a neighborhood of $\Gamma_-$ in $\Omega$ (see Remark~\ref{rem:trace}).

Because $u^\ve \rightharpoonup u^0_-$ weakly in $H^1(\Omega_-)$, the weak continuity of the trace gives
$u^\ve \rightharpoonup u^0_-$ weakly in $L^2(\Gamma_-)$,
and so $T^\ve u^\ve(x_1,\eta_-(x_1)) \rightharpoonup u^0_-(x_1,\eta_-(x_1))$
weakly in $L^2((0,1) \times \mathbb{T})$, as $\ve$ tends to zero.
Denote $\Gamma_\ve = \Omega_\ve \cap \Gamma_-$.
Let $\phi \in C^\infty_0(\Omega)$.
On the one hand,
\begin{align*}
 \int_{\Gamma_\ve} u^\ve \phi \nu_2 \, d\sigma 
& = \int_{0}^1 (\chi_{\Gamma_\ve} u^\ve  \phi \nu_2)(x_1,\eta_-(x_1)) \, dx_1 \\
& = \int_{0}^1\int_{\mathbb{T}} T^\ve (\chi_{\Gamma_\ve} u^\ve  \phi \nu_2)(x_1,\eta_-(x_1)) \, dy \, dx_1  \\
& \to \int_0^1 \int_{\mathbb{T}} (\chi_{Y(x)} u^0_- \phi \nu_2)(x_1,\eta_-(x_1)) \,dy \, dx_1 \\
& = \int_{\Gamma_-} h u^0_- \phi \nu_2 \, d\sigma,
\end{align*}
as $\ve$ tends to zero,
because $T^\ve \chi_{\Gamma_\ve}(x_1,\eta_-(x_1))$ converges to $\chi_{Y(x_1,\eta_-(x_1))}(y)$ strongly in $L^2((0,1)\times \mathbb{T})$.
On the other hand, 
\begin{align*}
\int_{\Gamma_\ve} u^\ve \phi \nu_2 \, d\sigma & = 
\int_{\Omega_u} T^\ve \frac{\partial u^\ve}{\partial x_2} T^\ve \phi \,dxdy
+
\int_{\Omega_u} T^\ve u^\ve T^\ve \frac{\partial \phi}{\partial x_2} \,dxdy + o(1) \\
& \to 
\int_{\Omega_u} \frac{\partial u^0_+}{\partial x_2}  \phi \,dxdy
+
\int_{\Omega_u} u^0_+ \frac{\partial \phi}{\partial x_2} \,dxdy  \\
& = \int_{\Gamma_-} h u^0_+ \phi \nu_2 \, d\sigma,
\end{align*}
as $\ve$ tends to zero.
Thus
\begin{align*}
\int_{\Gamma_-} (u^0_+ - u^0_-) \phi h \nu_2 \, d\sigma = 0, \quad \phi \in C^\infty_0(\Omega).
\end{align*}
It follows that $u^0_- = u^0_+$ in $L^2(\Gamma_-)$, for by the Lipschitz continuity of $\eta_-$,
\begin{align*}
h \nu_2 \ge  \frac{\min \{ h(x) : x \in \Gamma_- \} }{\sqrt{ 1 + \max (\eta_-')^2 }} > 0.\\
\end{align*}

A split of the variational form \eqref{eq:variationaloriginal} of problem \eqref{eq:originalproblem} into 
the periodic $\Omega^\ve_+$ and the fixed $\Omega_-$ reads
\begin{align*}
\int_{\Omega_-} \nabla u^\ve \cdot \nabla \psi \,dx +\int_{\Omega^\ve_+} \nabla u^\ve \cdot \nabla \psi \,dx
& =
\int_{\Omega^\ve} f \psi \,dx.
\end{align*}
After unfolding $\Omega^\ve_+$ and using Lemma~\ref{lm:unfolding} one arrives with $\psi \in C^\infty(\overline{\Omega},\Gamma)$ at 
\begin{align}\label{eq:W-M1bbb}
\int_{\Omega_-} \nabla u^\ve \cdot \nabla \psi \,dx
+ \int_{\Omega_u} T^\ve\nabla u^\ve \cdot T^\ve \nabla \psi \,dxdy
& = \int_{\Omega^\ve} f \psi \,dx + o(1).
\end{align}
By passing to the limit in \eqref{eq:W-M1bbb} 
with $\psi \in C^\infty(\overline{\Omega}, \Gamma)$, using the weak convergence of $u^\ve$, $T^\ve u^\ve$, $T^\ve \nabla u^\ve$ \eqref{eq:weaks1}--\eqref{eq:weaks3}, and that $p$ is of average zero in $y$, one obtains that $\chi_{\Omega_-} u^0_- + \chi_{\Omega_+}u^0_+$ satisfies 
\begin{align}\label{eq:weak-l-u}
\int_{\Omega_-}  \nabla u^0_- \cdot \nabla \psi \, dx + \int_{\Omega_u}    \frac{\partial u^0_+}{\partial x_2}  \frac{\partial \psi}{\partial x_2} \,  \,dy\, dx 
& = \int_{\Omega} \int_{Y(x)} f \psi  \,dy \, dx,
\end{align}
as $\ve$ tends to zero.
 
Let
\begin{align*}
\Omega_U & = \{ (x,y) : x \in \Omega, \, y \in Y(x) \},
\end{align*}
and
\begin{align*}
W(\Omega_U, \Gamma \times \mathbb{T}) = \{ v : \, & v \in L^2(\Omega_U), \, \frac{\partial v}{\partial x_1} \in L^2(\Omega_- \times \mathbb{T}), \, \frac{\partial v}{\partial x_2} \in L^2(\Omega_U), \\
& \nabla_y v = 0 \text{ in } \Omega_U,\, v = 0 \text{ on } \Gamma \times \mathbb{T} \}.
\end{align*}
By the density of $C^\infty(\overline{\Omega},\Gamma)$
in $W$ under hypothesis (H1),
\eqref{eq:weak-l-u} holds for any test function in $W(\Omega_U, \Gamma \times \mathbb{T})$:
\begin{align}\label{eq:limitW}
\int_{\Omega_U} \Big(  \chi_{\Omega_- \times \mathbb{T}}\frac{\partial u^0_-}{\partial x_1}\frac{\partial \psi}{\partial x_1} + \frac{\partial u^0_+}{\partial x_2}\frac{\partial \psi}{\partial x_2}  \Big) \,dxdy & = \int_{\Omega_U} f\psi \,dxdy,
\end{align}
for any $\psi \in W(\Omega_U, \Gamma \times \mathbb{T})$.
The equation~\eqref{eq:limitW} is well-posed in the Hilbert space $W(\Omega_U, \Gamma \times \mathbb{T})$.
Under (H1), the problem~\eqref{eq:limitW} is equivalent to \eqref{eq:limitproblem}, 
and $v$ belongs to the weighted space $W(\Omega, \Gamma)$ if and only if it belongs
to $W(\Omega_U, \Gamma \times \mathbb{T})$.
One concludes that $u^0 = \chi_{\Omega_-} u^0_- + \chi_{\Omega_+}u^0_+$.\\

\noindent \textbf{Claim 3: $\widetilde{u^\ve} \rightharpoonup h u^0$ weakly in $ L^2(\Omega)$.}\\

The uniqueness of the solution $u^0$ to the homogenized problem \eqref{eq:limitproblem}
ensures that the full $\ve$ sequences~\eqref{eq:weaks1}--\eqref{eq:weaks3} converge.\\

The weak limit of a sequence is obtained from the weak unfolding limit (weak two-scale limit) by taking the average over the cell of periodicity. Because $T^\ve u^\ve$ converges weakly to $u^0$, and $u^0$ does not depend on $y$,
\begin{align*}
\widetilde{u^\ve} \rightharpoonup \int_{Y(x)} u^0 \,dy = h u^0,
\end{align*}
weakly in $L^2(\Omega_+)$, as $\ve$ tends to zero.
It follows that $\widetilde{u^\ve} \rightharpoonup h u^0$ weakly in $ L^2(\Omega)$,
as $\ve$ tends to zero.\\

\noindent \textbf{Claim 4: $\widetilde{\nabla u^\ve} \rightharpoonup h A^0 \nabla u^0$ weakly in  $L^2(\Omega)$.}\\

By the same property of weakly converging unfolding, as was used in the previous paragraph, and that $p$ is of average zero in $y$,
\begin{align*}
\widetilde{\nabla u^\ve} \rightharpoonup \int_{Y(x)} \big( p, \frac{\partial u^0}{\partial x_2}\big) \,dy
= \big(0, h \frac{\partial u^0}{\partial x_2}\big),
\end{align*}
weakly in $L^2(\Omega_+)$, as $\ve$ tends to zero. It follows that
$\widetilde{\nabla u^\ve} \rightharpoonup h A^0 \nabla u^0$ weakly in  $L^2(\Omega)$, as $\ve$ tends to zero.
\end{proof}

\begin{remark}[About the $L^2$ trace]\label{rem:trace}
Let $\gamma(v)(x) = v(x)$ for $x \in \Gamma_-$, $v \in C^\infty(\overline{\Omega_+})$.
Then $\gamma : \{ v \in L^2(\Omega_+) : \frac{\partial v}{\partial x_2} \in L^2(\Omega_+) \} \to L^2(\Gamma_-)$
is linear and bounded, where the natural Hilbert space structures are employed.
Indeed, for $v \in C^\infty(\overline{\Omega_+})$,
\begin{align*}
v(x_1, \eta_-(x_1)) & =  - \int_{ \eta_-(x_1) }^{ x_2 } \frac{\partial v}{\partial x_2}(x_1,x_2) \,dx_2 + v(x_1, x_2).
\end{align*}
By the triangle inequality for the integral, the Bunyakovsky-Cauchy-Schwarz inequality, and the inequality of arithmetic and geometric means,
\begin{align*}
v(x_1, \eta_-(x_1))^2 \le C \Big( \int_{ \eta_-(x_1) }^{ \eta_+(x_1) } \big(\frac{\partial v}{\partial x_2}\big)^2(x_1,x_2) \,dx_2 + v^2(x_1, x_2)  \Big).
\end{align*}
An integration in $x_2$ over the interval $(\eta_-(x_1),\eta_+(x_1))$ gives
\begin{align*}
v(x_1, \eta_-(x_1))^2 \le C \Big( \int_{ \eta_-(x_1) }^{ \eta_+(x_1) } \big(\frac{\partial v}{\partial x_2}\big)^2(x_1,x_2) \,dx_2 + \int_{ \eta_-(x_1) }^{ \eta_+(x_1) } v^2(x_1, x_2) \,dx_2  \Big).
\end{align*}
An integration in $x_1$ over the interval $(0,1)$ gives
\begin{align*}
\int_{\Gamma_-} v^2 \,d\sigma \le C\Big( \int_{ \Omega_+} \big(\frac{\partial v}{\partial x_2}\big)^2 \,dx + \int_{\Omega_+} v^2 \,dx  \Big).
\end{align*}
The set $C^\infty(\overline{\Omega_+})$ is dense in the domain of $\gamma$.
\end{remark}

\section{Justification}\label{sec:justification}

In this section it is shown that the error in approximating the solutions $u^\ve$ to~\eqref{eq:originalproblem} and their flows
with the solution $u^0$ to the homogenized problem \eqref{eq:limitproblem} and its flow tends to zero,
measured in $L^2(\Omega^\ve, h)$.
The method we use is the convergence of energy.

Throughout this section (H1) is assumed to hold.

\begin{theorem}\label{tm:justification}
Suppose that (H1) holds.
Let $u^\ve \in H^1(\Omega^\ve, \Gamma)$ be the solutions to \eqref{eq:originalproblem},
and let $u^0 \in W(\Omega, \Gamma)$ be the solution to \eqref{eq:limitproblem}.
Then
\begin{align*}
\emph{(i)} & \quad \| u^\ve - u^0 \|_{ L^2(\Omega^\ve, \, h) } \to 0,\\
\emph{(ii)} & \quad \| \nabla u^\ve - A^0 \nabla u^0 \|_{ L^2(\Omega^\ve, \, h) } \to 0,
\end{align*}
as $\ve$ tends to zero.
\end{theorem}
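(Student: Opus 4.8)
The plan is to run the energy method in unfolded form. The first step is to establish convergence of the energy. Testing the variational form \eqref{eq:variationaloriginal} with $u^\ve$ gives $\int_{\Omega^\ve}|\nabla u^\ve|^2\,dx=\int_\Omega f\widetilde{u^\ve}\,dx$, which by Theorem~\ref{tm:homogenization}(i) converges to $\int_\Omega hfu^0\,dx$; testing the variational form \eqref{eq:variationallimit} of the homogenized problem with $u^0$ identifies this limit with $\int_\Omega hA^0\nabla u^0\cdot\nabla u^0\,dx$. Since $A^0=\mop{diag}(\chi_{\Omega_-},1)$ and $\int_{Y(x)}dy=h$, the latter splits as $\int_{\Omega_-}|\nabla u^0|^2\,dx+\int_{\Omega_u}(\partial u^0/\partial x_2)^2\,dxdy$.

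Next I would upgrade the weak limits \eqref{eq:weaks1}--\eqref{eq:weaks3} to strong convergences by a lower-semicontinuity squeeze. Using the exact unfolding identity as in the proof of Lemma~\ref{lm:Tveuveestimate}, the energy splits as $\int_{\Omega^\ve}|\nabla u^\ve|^2\,dx=\int_{\Omega_-}|\nabla u^\ve|^2\,dx+\int_{\Omega_u}|T^\ve\nabla u^\ve|^2\,dxdy+R^\ve$, where the boundary-strip remainder $R^\ve=\int_{\Omega_u^\ve\setminus\Omega_u}|T^\ve\nabla u^\ve|^2\,dxdy\ge 0$. Weak lower semicontinuity bounds the $\liminf$ of the first two terms below by $\int_{\Omega_-}|\nabla u^0|^2$ and $\int_{\Omega_u}(p^2+(\partial u^0/\partial x_2)^2)$ respectively. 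As the total converges to exactly $\int_{\Omega_-}|\nabla u^0|^2+\int_{\Omega_u}(\partial u^0/\partial x_2)^2$, comparison forces $\int_{\Omega_u}p^2=0$, so $p=0$ and the weak limit of $T^\ve\nabla u^\ve$ is $A^0\nabla u^0=(0,\partial u^0/\partial x_2)$ on $\Omega_u$; a subsequence argument then makes each term attain its lower bound and $R^\ve\to 0$. Norm convergence together with the weak convergence yields $\nabla u^\ve\to\nabla u^0$ strongly in $L^2(\Omega_-)$ and $T^\ve\nabla u^\ve\to(0,\partial u^0/\partial x_2)$ strongly in $L^2(\Omega_u)$.

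For the functions themselves I would reconstruct $T^\ve u^\ve$ vertically. On $\Omega_u$, using that $T^\ve$ commutes with $\partial/\partial x_2$ by \eqref{eq:dy}, $T^\ve u^\ve(x_1,x_2,y)=T^\ve u^\ve(x_1,\eta_-(x_1),y)+\int_{\eta_-(x_1)}^{x_2}T^\ve(\partial u^\ve/\partial x_2)\,dt$. The boundary term converges strongly in $L^2((0,1)\times\mathbb{T})$ to $u^0(x_1,\eta_-(x_1))$, because the trace $H^1(\Omega_-,\Gamma)\to L^2(\Gamma_-)$ is compact, so \eqref{eq:weaks1} gives $u^\ve|_{\Gamma_-}\to u^0|_{\Gamma_-}$ strongly, and unfolding preserves strong convergence. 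The integral term converges strongly to $u^0(x_1,x_2)-u^0(x_1,\eta_-(x_1))$ by the strong convergence of $T^\ve(\partial u^\ve/\partial x_2)$ just obtained. Hence $T^\ve u^\ve\to u^0$ strongly in $L^2(\Omega_u)$, while $u^\ve\to u^0$ strongly in $L^2(\Omega_-)$ by the Rellich theorem.

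Finally I would pass from the unfolded convergences to the weighted norms. Each error splits into a part on $\Omega_-$, where $h=1$ and $A^0=I$, so the claims reduce there to the strong $L^2(\Omega_-)$ convergences above, and a part on $\Omega^\ve_+$ that I unfold by the identity $\int_{\Omega^\ve_+}\psi\,dx=\int_{\Omega_u^\ve}T^\ve\psi\,dxdy$. Since $h\le 1$ forces $T^\ve h\le 1$, the $\Omega^\ve_+$ contribution to (ii) is dominated by $\int_{\Omega_u^\ve}|T^\ve\nabla u^\ve-T^\ve(0,\partial u^0/\partial x_2)|^2\,dxdy$, whose part over $\Omega_u$ vanishes by the strong convergence of $T^\ve\nabla u^\ve$ and of $T^\ve(0,\partial u^0/\partial x_2)$, and whose part over the strip $\Omega_u^\ve\setminus\Omega_u$ is controlled by $2R^\ve+2\int_{\Omega_u^\ve\setminus\Omega_u}(T^\ve\partial u^0/\partial x_2)^2\,dxdy$, both tending to zero ($R^\ve\to 0$ from the squeeze, the second by absolute continuity of the integral over a strip of measure $O(\ve)$). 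The same scheme gives (i). I expect the main obstacle to be the second step: showing that no energy escapes, that is, that the tangential flux limit $p$ vanishes and $R^\ve\to 0$, which is precisely what turns the weak two-scale limits into strong ones. This is compounded by the degeneracy of $h$ on $M=\{h=0\}$, which confines the limiting flux $\partial u^0/\partial x_2$ to the weighted space $L^2(\Omega_u)$ and so requires every estimate in the oscillating region to respect the weight.
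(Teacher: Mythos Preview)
Your proposal is correct and follows the same energy-method approach as the paper: establish convergence of the total energy via the variational forms, compare with the lower-semicontinuity bound coming from the weak limits~\eqref{eq:weaks1}--\eqref{eq:weaks3}, and conclude that $p=0$ and the weak convergences upgrade to strong. The paper compresses this into a single chain of inequalities and simply asserts that ``each weak convergence in~\eqref{eq:weaks1}--\eqref{eq:weaks3} is strong,'' whereas you spell out the nonnegative remainder $R^\ve$, the vertical reconstruction that yields strong convergence of $T^\ve u^\ve$, and the passage back to the weighted norm on $\Omega^\ve$ --- details the paper leaves implicit.
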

\begin{proof}
By the weak convergence of $u^\ve$, $T^\ve u^\ve$, $T^\ve \nabla u^\ve$ \eqref{eq:weaks1}--\eqref{eq:weaks3},
the property that sum of lim inf is less than or equal to lim inf of sum,
 using that $u^\ve$, $u^0$ solve~\eqref{eq:W-M1bbb},~\eqref{eq:variationallimit},
\begin{align*}
& \int_{\Omega_u} p^2 \,dxdy
+ \int_{\Omega} h A^0 \nabla u^0 \cdot \nabla u^0 \, dx  \\
& \quad \le \liminf_{\ve \to 0} \int_{\Omega_u^\ve} |T^\ve \nabla u^\ve|^2 \,dxdy
+ \liminf_{\ve \to 0} \int_{\Omega_-} |\nabla u^\ve|^2 \,dx \\
& \quad \le \liminf_{\ve \to 0} \Big( \int_{\Omega_u^\ve} |T^\ve \nabla u^\ve|^2 \, dxdy
+ \int_{\Omega_-} | \nabla u^\ve |^2 \, dx \Big)\\
& \quad = \liminf_{\ve \to 0} \Big( \int_{\Omega_u^\ve} T^\ve f T^\ve u^\ve \,dxdy
+ \int_{\Omega_-} f u^\ve \,dx \Big) \\
& \quad = \int_{\Omega} f u^0 h \,dx \\
& \quad = \int_{\Omega} hA^0 \nabla u^0 \cdot \nabla u^0 \, dx.
\end{align*}
This shows that the energies converge.
It follows that each weak convergence in \eqref{eq:weaks1}--\eqref{eq:weaks3} is strong, and $p = 0$.
\end{proof}

As $T^\ve \frac{\partial u^\ve}{\partial x_1}$ tends to zero strongly in $L^2(\Omega_u)$, the proof of 
Theorem~\ref{tm:justification} shows that
\begin{align*}
\widetilde{\frac{\partial u^\ve}{\partial x_1}}  \to 0 \quad \text{ strongly in } L^2(\Omega_+),
\end{align*}
as $\ve$ tends to zero, which justifies the first term in the asymptotic expansion under hypothesis (H1).

\section{Average preserving extension}\label{sec:averagepreserving}

In this section we prove the homogenization Theorem~\ref{tm:homogenization} for an average preserving extension (see \cite{lipton1990darcy, DaPe-DCDS}).
The method we use is the strong unfolding convergence obtained in the proof of Theorem~\ref{tm:justification},
which is stated explicitly in the form of Lemma~\ref{lm:strongTve} below.

Throughout this section (H1) is assumed to hold.
By the end of the section, a remark about (H2) is included.

Let the local average of a function $v$ in the $x_1$ direction over $\Omega^\ve_+$ be denoted by
\begin{align}\label{eq:avgext1}
m_\ve(v)(x) = \frac{1}{h(x)} \int_{Y(x)}  (T^\ve v) (x,y) \,dy, \quad x \in \Omega_+,
\end{align}
where $m_\ve(v)(x)$ is set to zero at points where $h(x) = |Y(x)| = 0$.
The average preserving extension is defined by
\begin{align}\label{eq:avgext2}
\widetilde{u^\ve}^m (x)= \begin{cases}
\begin{aligned}
&u^\ve(x) &&\text{if}\;x \in \Omega^\ve,\\
& m_\ve(u^\ve)(x) &&\text{if}\; x \in\Omega \setminus \Omega^\ve.
\end{aligned}
\end{cases} 
\end{align}

For $v \in H^1(\Omega^\ve)$, $m_\ve(v)$ and $m_\ve(\nabla v)$ are well-defined,
using the Sobolev space property that $v$ has a representative that is absolutely continuous on almost all
line segments parallel to the coordinate axes and with square integrable partial derivatives.
In particular, $v$ belongs to $H^1(\{ (t, x_2) : t \in \mathbb{R} \} \cap \Omega^\ve_+)$ for almost all $x_2$ of relevance.

\begin{theorem}\label{tm:homogenization2}
Suppose that (H1) holds.
Let $u^\ve \in H^1(\Omega^\ve, \Gamma)$ be the solutions to~\eqref{eq:originalproblem},
and let $u^0 \in W(\Omega, \Gamma)$ be the solution to~\eqref{eq:limitproblem}.
Then
\begin{align*}
\emph{(i)} & \quad \widetilde{u^\ve}^m \to u^0 \quad \text{ strongly in } L^2(\Omega, h),\\
\emph{(ii)} & \quad \widetilde{\nabla u^\ve}^m  \to A^0 \nabla u^0 \quad  \text{ strongly in } L^2(\Omega, h ),
\end{align*}
as $\ve$ tends to zero, where $\sim{\!m}$ denotes the extension~\eqref{eq:avgext1}, \eqref{eq:avgext2}.
\end{theorem}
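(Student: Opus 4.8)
The plan is to reduce the strong $L^2(\Omega,h)$ convergence of the average-preserving extensions to the strong unfolding convergence already secured in the proof of Theorem~\ref{tm:justification}, namely that $T^\ve u^\ve \to u^0$ and $T^\ve \nabla u^\ve \to A^0 \nabla u^0$ strongly in $L^2(\Omega_u)$ (with $p = 0$). The key observation is that the extension~\eqref{eq:avgext2} is built precisely from the $y$-average of the unfolded function, so its square-integral over $\Omega$ against the weight $h$ should be comparable to the unfolded integral over $\Omega_u$. First I would split $\Omega = \Omega_- \cup \Omega_+$ and dispose of the lower region $\Omega_-$ immediately: there $\Omega^\ve$ coincides with $\Omega$ for small $\ve$, $h = 1$, the extension equals $u^\ve$ itself, and the strong $L^2(\Omega_-)$ convergence $u^\ve \to u^0$ is already given by the Rellich compactness noted after Theorem~\ref{tm:homogenization}. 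So the whole content is in $\Omega_+$.

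In $\Omega_+$ I would rewrite, for a.e.\ fixed $x$, the weighted square of the extension using~\eqref{eq:avgext1}. The point is the identity
\begin{align*}
\int_{\Omega_+} h(x)\, \big(m_\ve(u^\ve)(x)\big)^2 \, dx
& = \int_{\Omega_+} \frac{1}{h(x)} \Big( \int_{Y(x)} (T^\ve u^\ve)(x,y)\,dy \Big)^2 dx,
\end{align*}
which by the Cauchy--Schwarz inequality in $y$ is bounded by $\int_{\Omega_u} (T^\ve u^\ve)^2 \,dxdy$; combined with strong unfolding convergence this controls the extension. More usefully, since $u^0$ does not depend on $y$ one has $m_\ve(u^\ve) - u^0 = \tfrac{1}{h}\int_{Y(x)} (T^\ve u^\ve - u^0)\,dy$, so the same Cauchy--Schwarz estimate yields
\begin{align*}
\int_{\Omega_+} h\, \big( m_\ve(u^\ve) - u^0 \big)^2 \, dx
& \le \int_{\Omega_u} (T^\ve u^\ve - u^0)^2 \, dxdy \to 0
\end{align*}
by the strong unfolding convergence of $T^\ve u^\ve$ to $u^0$. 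This proves part~(i). For part~(ii) I would argue identically with $\nabla u^\ve$ in place of $u^\ve$ and $A^0 \nabla u^0$ in place of $u^0$, using that the second component of $T^\ve \nabla u^\ve$ converges strongly to $\partial u^0_+/\partial x_2$ and the first component converges strongly to $p = 0 = (A^0 \nabla u^0)_1$ in $\Omega_+$, exactly the strong limits identified in Theorem~\ref{tm:justification}; the average-preserving extension of $\nabla u^\ve$ is defined componentwise by the same formula~\eqref{eq:avgext1}.

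The main obstacle is not the convergence estimate itself, which is a clean consequence of strong unfolding plus Cauchy--Schwarz, but rather the bookkeeping at the measure-theoretic level near the set $M = \{ h = 0 \}$ where the weight degenerates and the average is set to zero by convention. Here I would check that the factor $1/h(x)$ appearing after Cauchy--Schwarz cancels cleanly against the outer weight $h(x)$, so that no integrable singularity is introduced, and that the contribution from the degenerate strip is genuinely controlled by the unfolded integral over $Y(x)$ whose measure is $h(x)$. I expect that the hypotheses $h, h^{-1} \in L^1_{\mathrm{loc}}(\Omega)$ recorded before Lemma~\ref{lm:limitlemma}, together with the convention $m_\ve(v) = 0$ when $h = 0$, make this rigorous; in particular the estimate above never divides by $h$ without an accompanying factor of $h$, so the bound passes through the degeneracy without incident. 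The remark about hypothesis (H2) would follow because the strong unfolding convergence, and hence every estimate above, also holds under (H2) once the discrete symmetry is used to conclude that $u^0_+$ is independent of $y$ on the fundamental symmetry cell.
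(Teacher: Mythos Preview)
Your approach is essentially the paper's: reduce to the strong unfolding convergence from Theorem~\ref{tm:justification} (packaged as Lemma~\ref{lm:strongTve}), write $m_\ve(u^\ve) - u^0 = \tfrac{1}{h}\int_{Y(x)}(T^\ve u^\ve - u^0)\,dy$ using that $u^0$ is independent of $y$, and apply Cauchy--Schwarz in $y$ so that the factor $1/h$ cancels against the outer weight $h$. The paper does exactly this, and your remarks about the degeneracy near $\{h=0\}$ and about (H2) are also in line with the paper.

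There is, however, one bookkeeping oversight in $\Omega_+$. By definition~\eqref{eq:avgext2}, the extension $\widetilde{u^\ve}^m$ equals $u^\ve$ on $\Omega^\ve_+$ and equals $m_\ve(u^\ve)$ only on $\Omega_+ \setminus \Omega^\ve_+$. Your displayed estimate controls $\|m_\ve(u^\ve) - u^0\|_{L^2(\Omega_+,h)}$, which covers the second piece, but you still need $\|u^\ve - u^0\|_{L^2(\Omega^\ve_+,h)} \to 0$ for the first. The paper makes this split explicitly and cites Theorem~\ref{tm:justification}(i) for the first term; your argument is complete once you add this one line. A related point for part~(ii) in $\Omega_-$: Rellich gives only strong $L^2$ convergence of $u^\ve$, not of $\nabla u^\ve$; the strong gradient convergence there comes from the energy argument of Theorem~\ref{tm:justification}, which you already invoke for the unfolding limits, so you should cite it here as well rather than Rellich.
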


In the proof of Theorem~\ref{tm:justification}, the following strong convergence of the 
unfolded sequences were obtained, which in terms other than unfolding means that the oscillation spectrum of the sequences belong to the integers if not empty,
and that the sequences converge strongly two-scale.
\begin{lemma}\label{lm:strongTve}
Suppose that (H1) holds.
Let $u^\ve \in H^1(\Omega^\ve, \Gamma)$ be the solutions to~\eqref{eq:originalproblem},
and let $u^0 \in W(\Omega, \Gamma)$ be the solution to~\eqref{eq:limitproblem}.
Then
\begin{align*}
\emph{(i)} & \quad T^\ve u^\ve \to u^0 \quad \text{ strongly in } L^2(\Omega_u),\\
\emph{(ii)} & \quad T^\ve \nabla u^\ve \to \big( 0, \frac{\partial u^0}{\partial x_2}\big) \quad \text{ strongly in } L^2(\Omega_u),
\end{align*}
as $\ve$ tends to zero.
\end{lemma}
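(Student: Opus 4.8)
The plan is to read off both strong convergences from the energy--convergence computation already performed in the proof of Theorem~\ref{tm:justification}, and to identify the limits with $u^0$. Recall from Theorem~\ref{tm:homogenization} the weak limits \eqref{eq:weaks1}--\eqref{eq:weaks3} and the identification $u^0 = \chi_{\Omega_-} u^0_- + \chi_{\Omega_+} u^0_+$. The key observation is that the chain of inequalities in the proof of Theorem~\ref{tm:justification} is in fact a chain of equalities: by weak lower semicontinuity of the $L^2$ norm applied to \eqref{eq:weaks1} and \eqref{eq:weaks3} the limit energy is at most the sum of the two $\liminf$'s, while the variational identities \eqref{eq:W-M1bbb} and \eqref{eq:variationallimit} force the sum to equal $\int_\Omega h A^0 \nabla u^0 \cdot \nabla u^0 \, dx$. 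Hence $\int_{\Omega_u} p^2 \, dxdy = 0$, so $p = 0$, and since sum of $\liminf$ equals $\liminf$ of the sum equals the total, each $\liminf$ is a genuine limit, so both $\| \nabla u^\ve \|_{L^2(\Omega_-)}$ and $\| T^\ve \nabla u^\ve \|_{L^2(\Omega_u)}$ converge to the norms of their weak limits.

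Statement (ii) then follows from the Radon--Riesz property of $L^2$: weak convergence together with convergence of norms gives strong convergence, so $\nabla u^\ve \to \nabla u^0$ strongly in $L^2(\Omega_-)$ and $T^\ve \nabla u^\ve \to (0, \frac{\partial u^0}{\partial x_2})$ strongly in $L^2(\Omega_u)$, using $p = 0$. For statement (i) the convergence in $\Omega_-$ is the strong $L^2(\Omega_-)$ convergence $u^\ve \to u^0$ provided by the Rellich theorem. On the oscillating part I would recover $T^\ve u^\ve$ from its vertical derivative by the fundamental theorem of calculus, writing $u^\ve(x_1,x_2) = u^\ve(x_1,\eta_-(x_1)) + \int_{\eta_-(x_1)}^{x_2} \frac{\partial u^\ve}{\partial x_2}\,dt$ on almost every vertical segment of $\Omega^\ve$ and then unfolding. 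The integral term converges strongly by (ii) together with the uniform convergence $\eta_-(\ve[x_1/\ve]+\ve y) \to \eta_-(x_1)$ coming from the Lipschitz continuity of $\eta_-$, and the boundary term is the unfolding of the trace of $u^\ve$ on $\Gamma_-$, which converges strongly in $L^2((0,1)\times\mathbb{T})$ because the trace operator $H^1(\Omega_-) \to L^2(\Gamma_-)$ is compact and the limit $u^0_- = u^0_+$ on $\Gamma_-$ is $y$--independent. Assembling the two terms yields $T^\ve u^\ve \to u^0$ strongly in $L^2(\Omega_u)$.

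The main obstacle is precisely step (i): the energy functional controls only the flows, so the Radon--Riesz argument delivers the strong convergence of $T^\ve \nabla u^\ve$ directly but not that of $T^\ve u^\ve$ itself on $\Omega_+$, where the Dirichlet datum is absent and no Poincar\'e inequality vanishing at the bottom is available. Bridging this gap forces one through the interface trace, and the delicate point is to justify the strong convergence of the unfolded boundary values carried on the $\ve$--dependent lower boundary $x_2 = \eta_-(\ve[x_1/\ve]+\ve y)$ and to control its discrepancy with the fixed interface $x_2 = \eta_-(x_1)$ uniformly in $\ve$; this is where both the Lipschitz regularity of $\eta_-$ and the compactness of the trace on $H^1(\Omega_-)$ are essential.
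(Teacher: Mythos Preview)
Your proposal is correct and matches the paper's approach: the paper states Lemma~\ref{lm:strongTve} as a direct byproduct of the energy-convergence computation in the proof of Theorem~\ref{tm:justification}, concluding simply that ``each weak convergence in \eqref{eq:weaks1}--\eqref{eq:weaks3} is strong, and $p = 0$.'' Your Radon--Riesz argument for (ii) is exactly this; for (i) you correctly observe that the energy controls only the flows and supply the missing step via the fundamental theorem of calculus together with compactness of the trace on $\Gamma_-$, a detail the paper leaves implicit.
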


\begin{proof}[\bf Proof of Theorem~\ref{tm:homogenization2}]
Because the extension does not alter the functions $u^\ve$ in $\Omega_-$,
 the $\Omega_-$ parts of (i) and (ii) are included in Theorem~\ref{tm:justification}.
In $\Omega_+$, by definition,
\begin{align*}
\|\widetilde{u^\ve}^m - u^0\|_{L^2(\Omega_+, \, h)}^2 & =
\|u^\ve - u^0\|_{L^2(\Omega^\ve_+, \, h)}^2 +\|m_\ve(u^\ve) - u^0\|_{L^2(\Omega_+ \setminus \Omega_+^\ve, \, h)}^2.
\end{align*}
The first term on the right hand side tends to zero as $\ve$ tends to zero according to Theorem~\ref{tm:justification}.
The second term may be estimated as follows.
Because $u^0$ is independent of $y$, 
and $T^\ve u^\ve$ converges to $u^0$ strongly in $L^2(\Omega_u)$, 
by Theorem~\ref{tm:homogenization} and Lemma~\ref{lm:strongTve}(i),
the H{\"o}lder inequality gives
\begin{align*}
& \|m_\ve(u^\ve) -  u^0\|_{L^2(\Omega_+, \, h)}^2 \\[1mm]
& \quad = \int_{\Omega_+} \Big|\frac{1}{h(x)} \int_{Y(x)} T^\ve u^\ve \, dy - u^0 \Big|^2 h\, dx \\
& \quad = \int_{\Omega_+} \Big|\frac{1}{h(x)}\int_{Y(x)} T^\ve u^\ve \, dy -\frac{1}{h(x)} \int_{Y(x)} u^0 \, dy \Big|^2 h \, dx\\
& \quad \le  \int_{\Omega_u}  |T^\ve u^\ve - u^0 |^2 \,dxdy \\[1mm]
& \quad \to 0,
 \end{align*}
as $\ve$ tends to zero, which gives part~(i).

Part (ii) is obtained by repeating the above two steps for the components of $\nabla u^\ve$ in place of $u^\ve$,
using the second parts of Theorem~\ref{tm:justification} and Lemma~\ref{lm:strongTve}.
\end{proof}

\begin{remark}[Hypothesis (H2)]\label{rem:H2}
The theorems~\ref{tm:homogenization}, \ref{tm:justification}, \ref{tm:homogenization2}, and Lemma~\ref{lm:strongTve}, hold under the
slightly weaker hypothesis (H2).
The weak limit $u_+^0$ of $T^\ve u^\ve$ is the first component of the unique solution $(u_+^0,u_-^0)$ in 
the Sobolev space
\begin{align*}
W & = \big\{ (v,w) : v \in L^2(\Omega_u), \, \frac{\partial v}{\partial x_2} \in L^2(\Omega_u), \, \frac{\partial v}{\partial y} = 0, \\
& \qquad\qquad\quad\,  w \in H^1(\Omega_-,\Gamma), \, v = w \text{ on } \Gamma_- \times \mathbb{T}
 \big\}, 
\end{align*}
to the following problem: Find $(u_+,u_-) \in W$ such that
\begin{align}\label{eq:general}
\int_{\Omega_+} \int_{Y(x)} \frac{\partial u_+}{\partial x_2} \frac{\partial \varphi}{\partial x_2} \,dy \, dx
+ \int_{\Omega_-} \nabla u_- \cdot \nabla \phi \,dx & =
\int_{\Omega_+} f \int_{Y(x)} \varphi \,dy  \, dx + \int_{\Omega_-} f \phi \,dx,
\end{align}
for any $(\varphi, \phi) \in W$.
This statement relies on the density of $C^\infty(\overline{\Omega}, \Gamma)$ in $W$ (c.f. Section~\ref{sec:nonconnected} below).

By (H2), $(u_+^0(x,k/N + y), u_-^0)$, $k \in \mathbb{Z}$, and $(u_+^0(x,y_0-y), u_-^0)$ solve the problem~\eqref{eq:general}.
By uniqueness of solution, $u_+^0(x,y) = u_+^0(x,k/N + y)$, $k \in \mathbb{Z}$, and $u_+^0(x,y) = u_+^0(x,y_0-y)$.
Because all $Y_0(x)$ are connected, $\frac{\partial u^0_+}{\partial y} = 0$ implies that
$u^0_+$ is constant in $y$.
\end{remark}

\section{A numerical example}\label{sec:numerical}

To illustrate the rate of convergence in Theorem~\ref{tm:justification} under hypothesis (H2),
we consider the following example.
Let $u^\ve$ be the solutions to \eqref{eq:originalproblem} with $f = 1$ and 
\begin{align}\label{eq:etanumerical}
\eta(x,y) & = \frac{1}{4}(1+x \cos(4 \pi x))(3-\cos(2\pi y)) b(y),
\end{align}
where a bump is introduced at $y_0$ by
\begin{align*}
b(y) & = 
\begin{cases}
1 - 2 ( \delta - |y-y_0|) & \text{ if } |y-y_0| \le \delta,\\
1 & \text{ otherwise,}
\end{cases}
\end{align*}
with $\delta = 1/10$ and $y_0 = 1/2$, $y \in [0,1)$.
Then $\eta$ satisfies (H2) but not (H1).
Moreover, $h = 1$ on the graph $\Gamma_-$ of $\eta_-$, and $h = 0$ on the graph of $\eta_+$.
The domains $\Omega^\ve$ and $\Omega$ are illustrated in Figure~\ref{fig:domainnumerical}.

The solutions $u^\ve$, $u^0$ to the problems \eqref{eq:originalproblem}, \eqref{eq:limitproblem} are approximated by means of the finite element method using piecewise linear Lagrange elements.
The numerical approximations are denoted by $u^\ve_s$, $u^0_s$.
The numerically computed rates of convergence for the approximation in Theorem~\ref{tm:homogenization},
are illustrated in Figure~\ref{fig:error}, obtained using the numerical tool FreeFEM~\cite{hecht2012new}.
The data points were obtained using the values of $\ve$ and the number of degrees of freedom given in Table~\ref{tab:numerics}.

One observes that the rate of convergence for $u^\ve$ appears to be close to $\ve^{3/4}$,
and for $\nabla u^\ve$ close to $\ve^{1/4}$,
for the approximation measured in $L^2(\Omega^\ve, h)$ for the selected values of $\ve$.

\begin{figure}[!hb] 
    \centering
    \begin{minipage}{.5\textwidth}
        \centering
        \hspace*{5mm}
        \includegraphics[trim=30mm 5mm 10mm 0mm, clip,height=6.5cm]{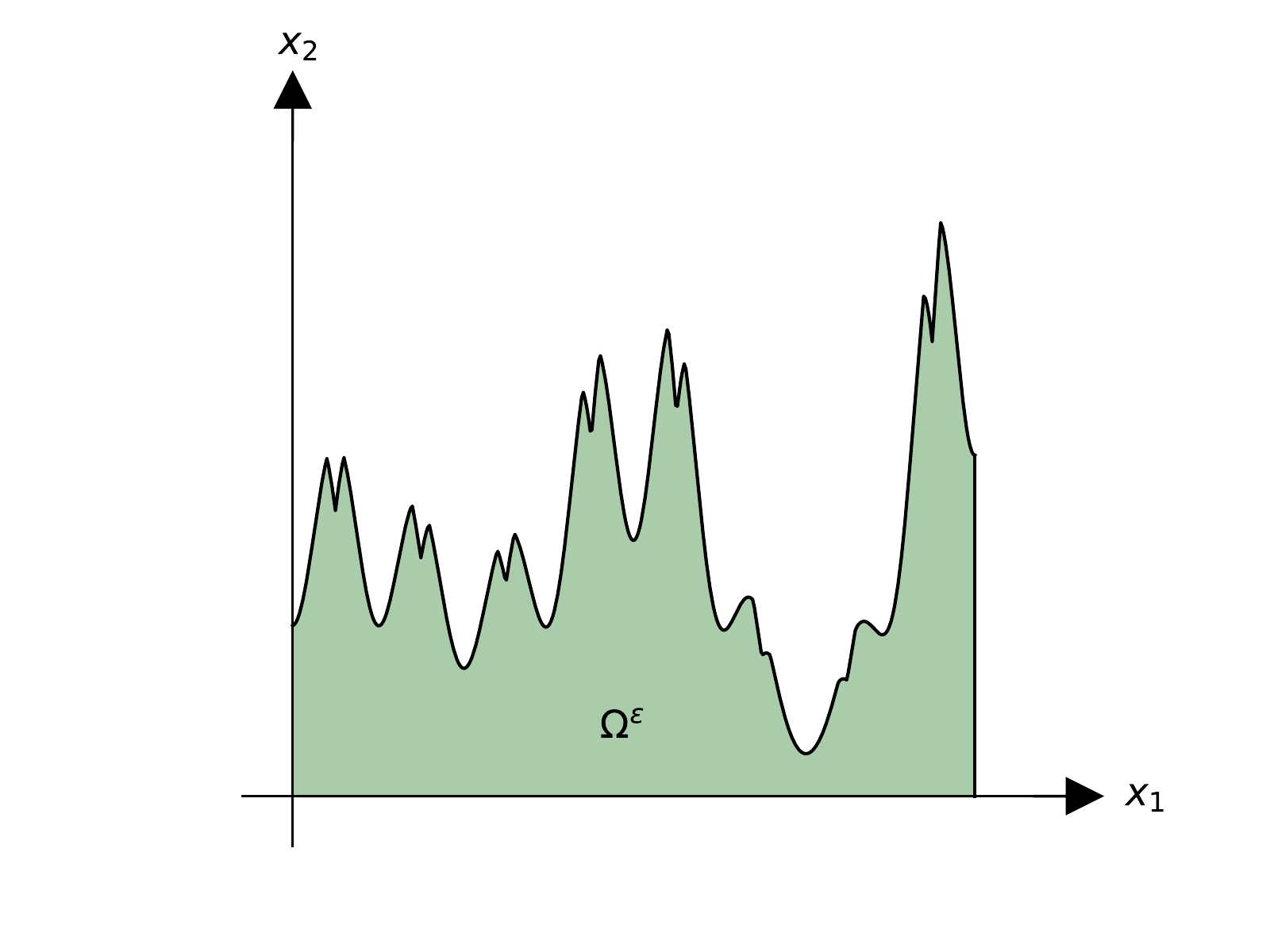}\\
        (a)
    \end{minipage}%
    \begin{minipage}{0.5\textwidth}
        \centering
        \hspace*{5mm}
        \includegraphics[trim=30mm 5mm 10mm 0mm, clip,height=6.5cm]{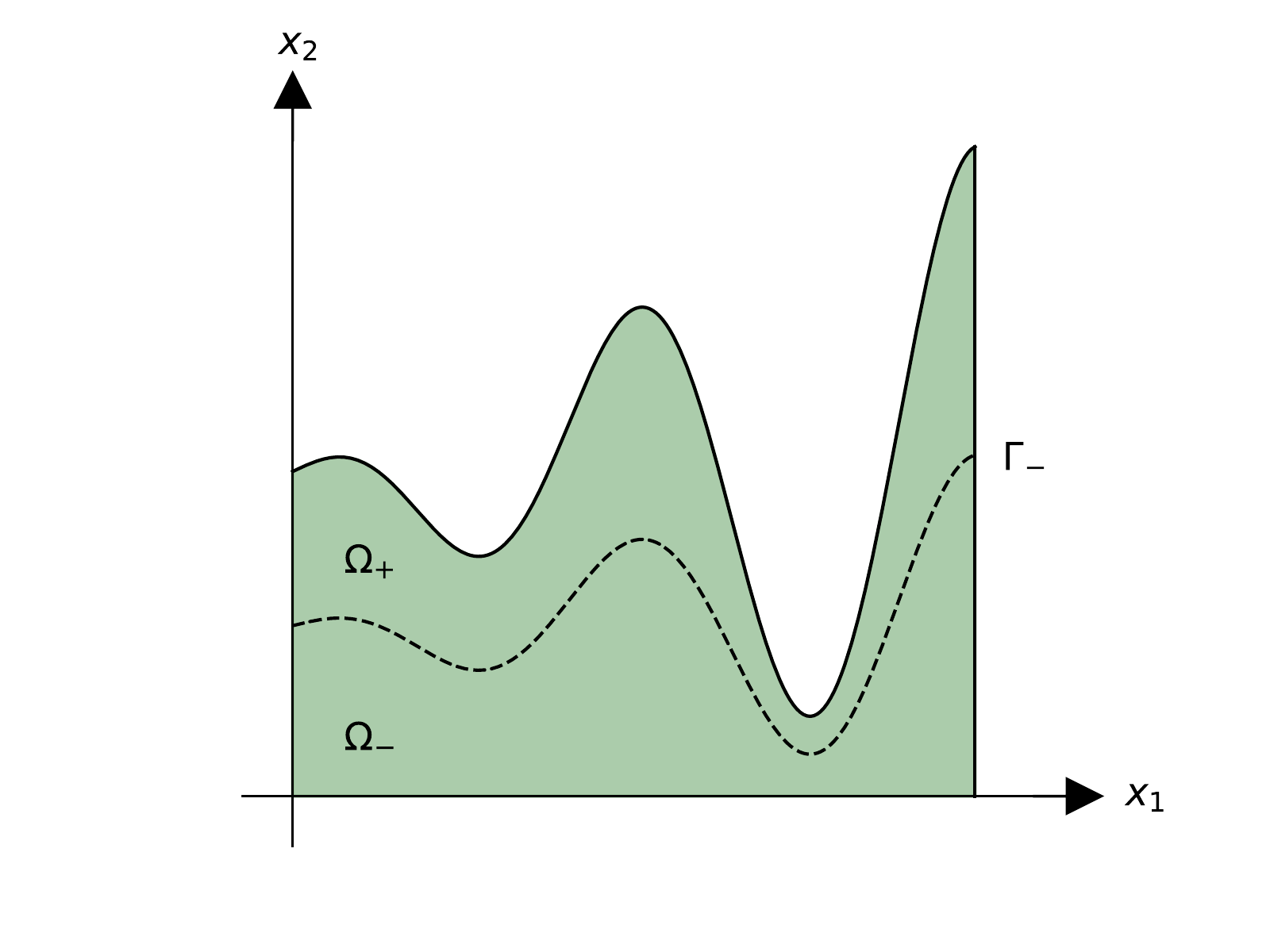}\\
        (b)
    \end{minipage}
    \caption{A locally periodic domain $\Omega^\ve$ (a) that satisfies (H2) but not (H1), $\ve = 1/8$, and the corresponding homogeneous domain $\Omega$ (b),
                 with $\Gamma_-$ marked as a dashed line separating the regions $\Omega_+$ and $\Omega_-$.
                 The particular function $\eta$ is specified in~\eqref{eq:etanumerical}.}
    \label{fig:domainnumerical}
\end{figure}

\begin{figure}[!hb] 
    \centering
        \includegraphics[trim=30mm 5mm 10mm 0mm, clip,height=6.5cm]{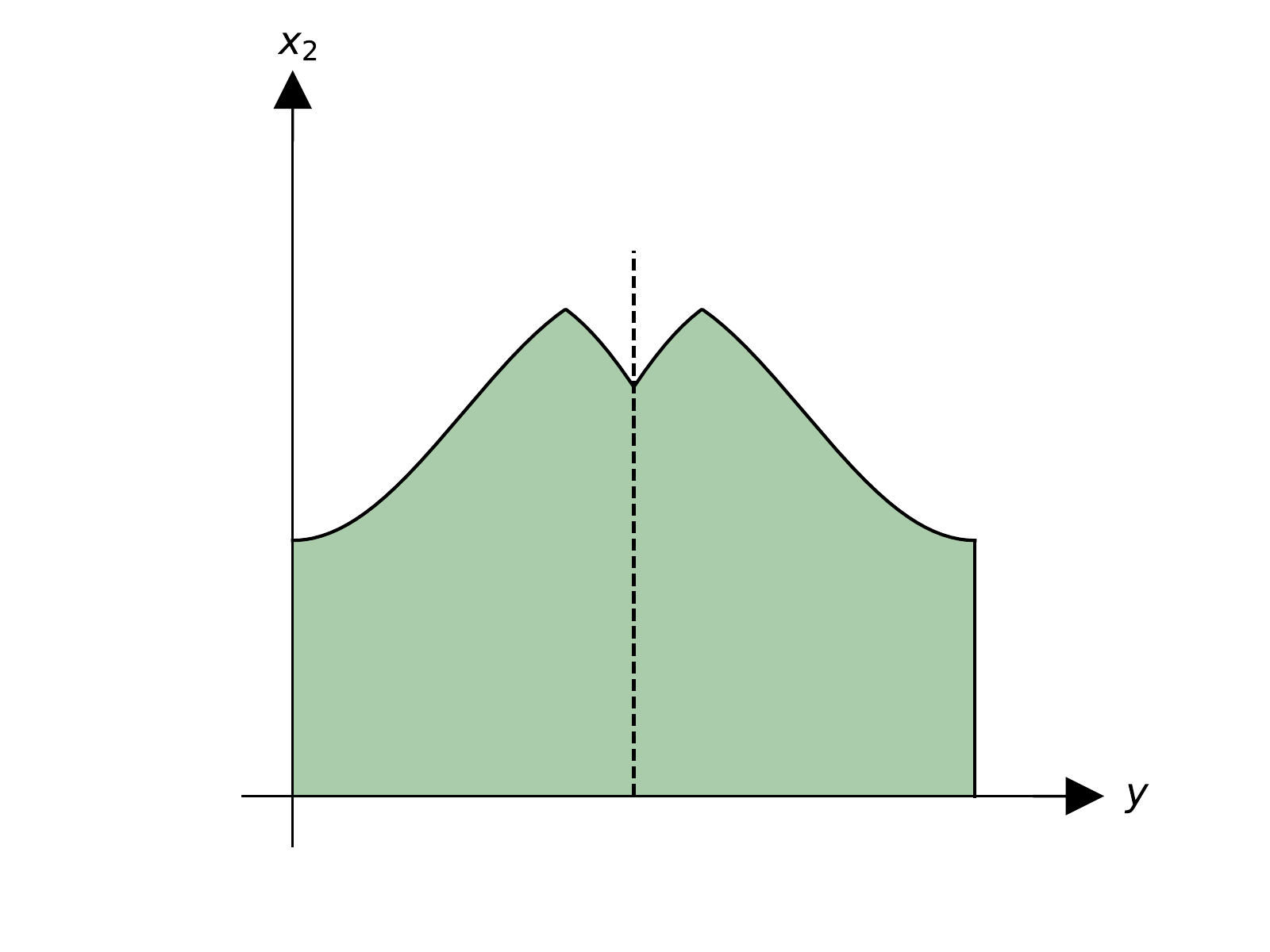}\\
    \caption{The region below the graph $x_2 = \eta(1/2,y)$ for $\eta$ given by~\eqref{eq:etanumerical} that satisfies (H2) but not (H1), with symmetry line $y = 1/2$ marked with a dashed line.}
    \label{fig:cellH2}
\end{figure}

\begin{figure}[!htb]
\centering
\includegraphics[trim=0mm 0mm 0mm 0mm,clip,width=.75\textwidth]{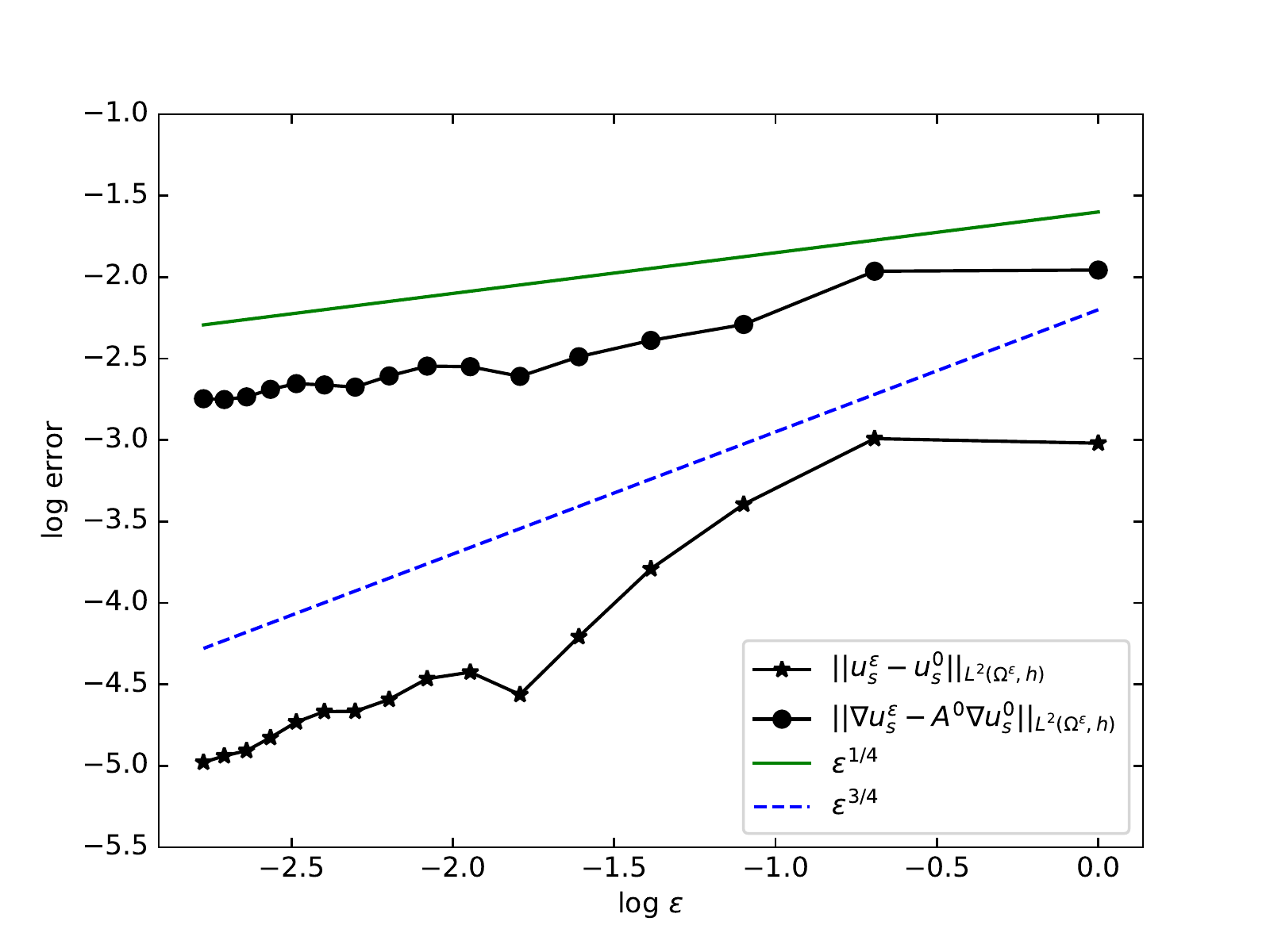}
\caption{The numerically computed rates of convergence for the approximations $u^0$ to $u^\ve$, and $A^0 \nabla u^0$ to $\nabla u^\ve$, in Theorem~\ref{tm:justification} for $f = 1$ and $\Omega^\ve$ given by~\eqref{eq:etanumerical} and illustrated for $\ve = 1/8$ in Figure~\ref{fig:domainnumerical}.}
\label{fig:error}
\end{figure}

\begin{table}[!ht]
\begin{tabular}{rccc}
$\ve$ & $\| u_s^\ve - u^0_s \|_{L^2(\Omega^{\varepsilon}, \, h)}$ & $\| \nabla u_s^\ve - A^0 \nabla u^0_s \|_{L^2(\Omega^{\varepsilon}, \, h)}$ & dof \\[3mm]
$0$ & -- & --                          & $78 \cdot 10^2$ \\
$1/1$   & $\,\,\,\,0.0488$           & $\,\,\,\,0.141$ & $39 \cdot 10^3$ \\
$1/2$   & $\,\,\,\,0.0502$           & $\,\,\,\,0.140$ & $46 \cdot 10^3$ \\
$1/3$   & $\,\,\,\,0.0336$           & $\,\,\,\,0.101$ & $33 \cdot 10^3$ \\
$1/4$   & $\,\,\,\,0.0226$           & $0.0918$ & $27 \cdot 10^3$ \\
$1/5$   & $\,\,\,\,0.0149$         & $0.0830$ & $59\cdot 10^3$ \\
$1/6$   & $\,\,\,\,0.0104$         & $0.0736$ & $69\cdot 10^3$ \\
$1/7$   & $\,\,\,\,0.0120$         & $0.0781$ & $77\cdot 10^3$ \\
$1/8$   & $\,\,\,\,0.0115$         & $0.0784$ & $11\cdot 10^4$ \\
$1/9$   & $\,\,\,\,0.0101$         & $0.0738$ & $13\cdot 10^4$ \\
$1/10$ & $\,0.00941$         & $0.0689$ & $15\cdot 10^4$ \\
$1/11$ & $0.00940$         & $0.0698$ & $20\cdot 10^4$ \\
$1/12$ & $0.00882$         & $0.0704$ & $23\cdot 10^4$ \\
$1/13$ & $0.00801$         & $0.0680$ & $24\cdot 10^4$ \\
$1/14$ & $0.00740$         & $0.0649$ & $35\cdot 10^4$ \\
$1/15$ & $0.00716$         & $0.0638$ & $35\cdot 10^4$ \\
$1/16$ & $0.00688$         & $0.0642$ & $37\cdot 10^4$ \\[5mm]
\end{tabular}
\caption{The numerically computed errors for the approximation in Theorem~\ref{tm:homogenization}, rounded to three significant digits.
The number of degrees of freedom, rounded to two significant digits, is abbreviated to dof.
The row $\ve = 0$ is for the numerical solution $u^0_s$ to the homogenized problem.}
\label{tab:numerics}
\end{table}

\clearpage

\section{A case of non-connected sections}\label{sec:nonconnected}

In this section we consider a case where the sections $Y(x)$ are allowed to be disconnected,
that is the hypotheses (H1) and (H2) are mildly relaxed.
First we describe the weak unfolding limit of the zero extended solutions to problem~\eqref{eq:originalproblem},
and then we describe the weak limit in terms of local domain densities using the ideas of Mel'nyk, splitting the oscillating part of the domain into branches
 (c.f.~\cite{de2005asymptotic,DurMel,mel2015asymptotic}).

Here, the full unfolded limit domain will be used, denoted by
\begin{align*}
\Omega_U = \{ (x,y) : x \in \Omega, \, y \in Y(x) \},
\end{align*}
and the effective matrix is given by
\begin{align}\label{eq:A0nonconnected}
A^0 & = 
\begin{pmatrix}
\chi_{\Omega_- \times \mathbb{T}} & 0 \\ 0 & 1
\end{pmatrix}.
\end{align}
The limit problem reads
\begin{align}\label{eq:limitproblemstrongdisconnected}
-\mop{div}_x ( A^0 \nabla_x u ) & = f \quad \hspace*{-0.5mm}\text{ in } \Omega_U, \notag\\
\nabla_y u & = 0 \quad \text{ in } \Omega_U, \\
A^0 \nabla_x u \cdot \nu_x & = 0 \quad \text{ on } \partial \Omega_U \setminus \Gamma \times \mathbb{T}, \notag\\
u & = 0 \quad \text{ on } \Gamma \times \mathbb{T},\notag
\end{align}
where $\nu_x$ denotes the projection of the outward unit normal to $\Omega_U$ onto $\mathbb{R}^2$.
The limit problem~\eqref{eq:limitproblemstrongdisconnected} is well-posed in the Sobolev space
\begin{align*}
& W(\Omega_U, \Gamma \times \mathbb{T}) \\
&\quad  = \{ v \in L^2(\Omega_U) : A^0\nabla_x v \in L^2(\Omega_U) ,\, \nabla_y v = 0 \text{ in } \Omega_U, \, v = 0 \text{ on } \Gamma \times \mathbb{T} \},
\end{align*}
equipped with the natural Hilbert space structure.

\begin{theorem}\label{tm:weaknonconnected}
Let $u^\ve \in H^1(\Omega^\ve, \Gamma)$ be the solutions to problem~\eqref{eq:originalproblem}.
Let $u^0 \in W(\Omega_U, \Gamma \times \mathbb{T})$ be the solution to~\eqref{eq:limitproblemstrongdisconnected}.
Then
\begin{align*}
&\emph{(i)} \quad \widetilde{u^\ve} \rightharpoonup \int_{Y(x)} u^0 \,dy \quad \text{ weakly in } L^2(\Omega), \\
&\emph{(ii)} \quad \widetilde{\nabla u^\ve} \rightharpoonup \int_{Y(x)} A^0 \nabla_x u^0 \,dy \quad \text{ weakly in } L^2(\Omega),
\end{align*}
as $\ve$ tends to zero.
\end{theorem}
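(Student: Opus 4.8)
The plan is to retrace the proof of Theorem~\ref{tm:homogenization}, but to keep the full unfolded domain $\Omega_U$ in play instead of collapsing it to $\Omega$, since the two-scale limit is no longer forced to be independent of $y$ once $Y(x)$ may split into several branches. First I would note that the a priori bounds of Lemma~\ref{lm:uveestimate} and Lemma~\ref{lm:Tveuveestimate} never use (H1), so $\| u^\ve \|_{H^1(\Omega^\ve,\Gamma)} \le C$ and $\| T^\ve u^\ve \|_{L^2(\Omega_u)}, \| T^\ve \nabla u^\ve \|_{L^2(\Omega_u)} \le C$ continue to hold. Weak compactness then furnishes a subsequence with $u^\ve \rightharpoonup u^0_-$ in $H^1(\Omega_-,\Gamma)$, $T^\ve u^\ve \rightharpoonup u^0_+$ in $L^2(\Omega_u)$, and $T^\ve \nabla u^\ve \rightharpoonup (p,\partial u^0_+/\partial x_2)$ in $L^2(\Omega_u)$. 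The scaling relation~\eqref{eq:dy} gives $\partial (T^\ve u^\ve)/\partial y = \ve\, T^\ve(\partial u^\ve/\partial x_1) \to 0$, hence $\nabla_y u^0_+ = 0$; absent connectedness this only says $u^0_+$ is constant on each connected component of $Y(x)$, which is exactly the defining constraint of $W(\Omega_U, \Gamma \times \mathbb{T})$. Setting $u^0 = \chi_{\Omega_- \times \mathbb{T}} u^0_- + \chi_{\Omega_u} u^0_+$ after the trace matching below places the limit in that space.

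Next I would extract the flux information. The oscillating-test-function argument of Claim~1, built from the shifts~\eqref{eq:xk}--\eqref{eq:testfn}, carries over verbatim: it relies only on the Lipschitz pinching $\eta(x^\ve_k, x^\ve_k/\ve) - \eta_-(x^\ve_k) \le C\ve$ of $\Omega^\ve_+$ along the minimizing grid~\eqref{eq:xk} and on the compact support of $\phi$ in $\Omega_+$, not on the sections being connected, so it again yields $\int_{Y(x)} p \,dy = 0$ a.e. in $\Omega_+$, and likewise the interface identity $u^0_+ = u^0_-$ on $\Gamma_- \times \mathbb{T}$ follows as in Claim~2. The decisive new ingredient is the admissible class of test functions. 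Following the branch splitting of Mel'nyk, I would use oscillating test functions $\psi^\ve(x) = \Psi(x, x_1/\ve)$ with $\Psi \in C^\infty(\overline{\Omega_U})$ constant in $y$ on each branch of $Y(x)$ and vanishing near $\Gamma$; these are genuinely admissible because the gaps of $Y(x)$ are precisely where $\Omega^\ve_+$ is absent, so the branch-to-branch jumps of $\psi^\ve$ fall outside the domain, while $\partial_y \Psi = 0$ kills the dangerous $\ve^{-1}$ term and gives $T^\ve \psi^\ve \to \Psi$, $T^\ve \nabla \psi^\ve \to \nabla_x \Psi$ strongly in $L^2(\Omega_U)$. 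Splitting~\eqref{eq:variationaloriginal} over $\Omega_-$ and $\Omega^\ve_+$, unfolding with Lemma~\ref{lm:unfolding} to reach~\eqref{eq:W-M1bbb}, and passing to the limit using~\eqref{eq:weaks1}--\eqref{eq:weaks3} and the vanishing average of $p$, I obtain that $u^0$ satisfies $\int_{\Omega_U} A^0 \nabla_x u^0 \cdot \nabla_x \Psi \,dxdy = \int_{\Omega_U} f \Psi \,dxdy$ for every such $\Psi$.

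By the density of smooth branch-constant functions in $W(\Omega_U, \Gamma \times \mathbb{T})$ this weak identity extends to all test functions in the space, so $u^0$ is the weak solution of~\eqref{eq:limitproblemstrongdisconnected}; well-posedness in $W(\Omega_U, \Gamma \times \mathbb{T})$ then pins down $u^0$ uniquely and upgrades the convergences to the full $\ve$-sequence. Finally the asserted weak limits are read off from the mean-value property of unfolding, which recovers the weak $L^2(\Omega)$ limit as the $y$-average over the cell of the two-scale limit: averaging $u^0$ gives $\widetilde{u^\ve} \rightharpoonup \int_{Y(x)} u^0 \,dy$, and averaging $(p, \partial u^0/\partial x_2)$ gives $\widetilde{\nabla u^\ve} \rightharpoonup (\int_{Y(x)} p \,dy, \int_{Y(x)} \partial u^0/\partial x_2 \,dy)$. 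Since $\int_{Y(x)} p \,dy = 0$ in $\Omega_+$ while $p = \partial u^0_-/\partial x_1$ and $Y(x) = \mathbb{T}$ in $\Omega_-$, the right-hand side collapses to $\int_{Y(x)} A^0 \nabla_x u^0 \,dy$, which is (i)--(ii).

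The main obstacle is the density of smooth branch-constant functions in $W(\Omega_U, \Gamma \times \mathbb{T})$ used to pass from the oscillating test functions to arbitrary elements of the space. The difficulty concentrates entirely at the critical heights where the topology of $Y(x)$ changes --- where branches are born, merge, or pinch off, these being the levels of the local extrema of $\eta(x_1,\cdot)$ --- since there a branch-constant function need not extend smoothly in $x$ and the matching forced by $\partial u^0/\partial x_2 \in L^2(\Omega_U)$ across these transitions must be respected. The Lipschitz regularity of $\eta$ confines these transition layers to sets of small measure and the Mel'nyk branch decomposition organizes the bookkeeping, but it is precisely the slightly milder restriction on the domain that must be chosen so as to guarantee this density; establishing it is the technical heart of the argument.
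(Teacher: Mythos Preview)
Your overall architecture matches the paper's, but there is a genuine gap in the treatment of the flux corrector $p$. You import Claim~1 from the connected case verbatim, using test functions $\varphi^\ve(x) = (x_1 - x_k^\ve)\phi(x)$ with $\phi \in C^\infty_0(\Omega_+)$ depending only on $x$, and this indeed yields $\int_{Y(x)} p\,dy = 0$. But when you then pass to the limit in~\eqref{eq:W-M1bbb} with branch-constant test functions $\Psi(x,y)$, the surviving term in $\Omega_u$ is
\[
\int_{\Omega_u} p\,\frac{\partial \Psi}{\partial x_1}\,dxdy,
\]
and since $\Psi$ (and hence $\partial_{x_1}\Psi$) takes different values on different branches of $Y(x)$, the mean-zero property over the whole section $Y(x)$ does \emph{not} kill this integral. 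You therefore cannot conclude that $u^0$ satisfies the variational form of~\eqref{eq:limitproblemstrongdisconnected}, and without that identification you have neither uniqueness nor full-sequence convergence.

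The paper closes this gap by proving the stronger statement $p = 0$ a.e.\ in $\Omega_u$, in two steps. First, test functions $\varphi^\ve(x) = \ve\,\phi(x,x_1/\ve)$ with arbitrary $\phi \in C^\infty_0(\Omega_u)$ give $\int_{\Omega_u} p\,\partial_y\phi\,dxdy = 0$, hence $\nabla_y p = 0$, so $p$ is itself branch-constant. Second, the shifted construction $(x_1 - x_k^\ve)\phi(x,x_1/\ve)$ is run with $\phi \in C^\infty_0(\Omega_u)$ satisfying $\nabla_y\phi = 0$ (branch-constant, not merely $y$-independent), yielding $\int_{\Omega_u} p\,\phi\,dxdy = 0$ for all such $\phi$; by density one may take $\phi = p$ and conclude $p = 0$. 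Only then does the limit equation with branch-constant $\Psi$ go through cleanly. Your final paragraph correctly flags the density of branch-constant smooth functions as the technical crux, but note that this density is already needed one step earlier, in the argument for $p = 0$, not only in extending the limit identity to all of $W(\Omega_U,\Gamma\times\mathbb{T})$.
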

\begin{proof}
The following a priori estimate holds
\begin{align*}
\| u^\ve \|_{H^1(\Omega_-)}
+ \| T^\ve u^\ve \|_{L^2(\Omega_u)}
+ \| T^\ve \nabla u^\ve \|_{L^2(\Omega_u)}
& \le C.
\end{align*}
There exist
\begin{align*}
u^0_- & \in H^1(\Omega_-, \Gamma), \\
u^0_+ & \in \big\{ v \in L^2(\Omega_u) : \frac{\partial v}{\partial x_2} \in L^2(\Omega_u), \, \nabla_y v = 0 \text{ in } \Omega_u \big\},\\
p & \in L^2(\Omega_u),
\end{align*}
such that, along a subsequence still denoted by $\ve$,
\begin{align*}
u^\ve & \rightharpoonup u^0_- \,\,\qquad\qquad \text{ weakly in } H^1(\Omega_-, \, \Gamma), \\
T^\ve u^\ve & \rightharpoonup u_+^0 \,\,\qquad\qquad \text{ weakly in } L^2(\Omega_u), \\
T^\ve \nabla u^\ve & \rightharpoonup \big( p, \frac{\partial u_+^0}{\partial x_2} \big) \qquad\!\! \text{ weakly in } L^2(\Omega_u).
\end{align*}
At this point $u_+^0$ and $p$ depend in general on the fast variable $y$.\\

The unfolded variational form of problem \eqref{eq:originalequation} is
by Lemma~\ref{lm:unfolding}, 
\begin{align}\label{eq:unfoldedform}
\int_{\Omega_-} \nabla u^\ve \cdot \nabla \psi \,dx
+ \int_{\Omega_u} T^\ve\nabla u^\ve \cdot T^\ve \nabla \psi \,dxdy
& = \int_{\Omega_U} T^\ve f \, T^\ve \psi \,dxdy + o(1),
\end{align}
as $\ve$ tends to zero, for any sufficiently smooth $\psi \in H^1(\Omega^\ve, \,\Gamma)$.\\

\noindent \textbf{Step 1: $p = 0$ a.e. in $\Omega_u$.}\\

We will show that $p = 0$ almost everywhere in $\Omega_u$.
Let $\phi \in C^\infty_0(\Omega_u)$ and consider the sequence of test
functions $\varphi^\ve(x) = \ve \phi\big(x, \frac{x_1}{\ve}\big)$.
Then 
\begin{align*}
T^\ve \varphi^\ve & \to 0, \\
T^\ve \nabla \varphi^\ve & \to \big( \frac{\partial \phi}{\partial y}, 0 \big),
\end{align*}
strongly in $L^2(\Omega_u)$, as $\ve$ tends to zero.
By passing to the limit in~\eqref{eq:unfoldedform} with test functions $\varphi^\ve$
one obtains
\begin{align*}
\int_{\Omega_u} p \frac{\partial \phi}{\partial y} \,dxdy & = 0, \quad \phi \in C^\infty_0(\Omega_u),
\end{align*}
as $\ve$ tends to zero. That is, $\nabla_y p = 0$ in $\Omega_u$.

Let $\phi \in C^\infty_0(\Omega_u)$ be such that $\nabla_y \phi = 0$.
Then consider the sequence of test functions $\varphi^\ve$ defined by
\begin{align*}
\varphi^\ve(x) & = (x_1 - x_k^\ve)\phi\big(x,\frac{x_1}{\ve}\big), \quad \text{ if } x_1 \in [x_k^\ve, x_{k+1}^\ve),
\end{align*}
where $x^\ve_k$ is a tagging such as in~\eqref{eq:xk}.
Then 
\begin{align*}
T^\ve \varphi^\ve & \to 0, \\
T^\ve \nabla \varphi^\ve & \to ( \phi, 0 ),
\end{align*}
strongly in $L^2(\Omega_u)$, as $\ve$ tends to zero.
By passing to the limit in~\eqref{eq:unfoldedform} with test functions $\varphi^\ve$
one obtains
\begin{align}\label{eq:pphi}
\int_{\Omega_u} p \phi \,dxdy & = 0, \quad \phi \in C^\infty_0(\Omega_u, \nabla_y),
\end{align}
as $\ve$ tends to zero.
By the density of $C^\infty_0(\Omega_u, \nabla_y)$
in the Sobolev space $\{ v \in L^2(\Omega_u) : \nabla_y v = 0 \text{ in } \Omega_u \}$,
one concludes that~\eqref{eq:pphi} holds for $\phi = p$, that is
\begin{align*}
\int_{\Omega_u} p^2 \,dxdy & = 0.
\end{align*}

\noindent \textbf{Step 2: Transmission condition.}\\

To determine the transmission condition for $u^0_-$ and $u^0_+$
on the internal interface 
\begin{align*}
\Gamma_-^u & = \{ (x,y) : x \in \Gamma_-, \, y \in Y(x) \},
\end{align*}
we will trace $u^\ve$ from either side.

Let $\phi \in C^\infty_0(\Omega_U)$, and set $\phi^\ve(x, \frac{x_1}{\ve})$.

On the one hand, from below,
\begin{align*}
 \int_{\Gamma_\ve} u^\ve \phi^\ve \nu_2 \, d\sigma 
& = \int_{0}^1 (\chi_{\Gamma_\ve} u^\ve  \phi^\ve \nu_2)(x_1,\eta_-(x_1)) \, dx_1 \\
& = \int_{0}^1\int_{\mathbb{T}} T^\ve (\chi_{\Gamma_\ve} u^\ve  \phi \nu_2)(x_1,\eta_-(x_1)) \, dy \, dx_1  \\
& \to \int_0^1 \int_{\mathbb{T}} (\chi_{Y(x)} u^0_- \phi \nu_2)(x_1,\eta_-(x_1),y) \,dy \, dx_1 \\
& = \int_{\Gamma_-} \int_{Y(x)} u^0_- \phi \nu_2 \, dy \, d\sigma,
\end{align*}
as $\ve$ tends to zero, because $T^\ve \chi_{\Gamma_\ve}(x_1,\eta_-(x_1))$ converges to $\chi_{Y(x_1,\eta_-(x_1))}(y)$ strongly in $L^2((0,1)\times \mathbb{T})$.

On the other hand, from above,
\begin{align*}
\int_{\Gamma_\ve} u^\ve \phi^\ve \nu_2 \, d\sigma & = 
\int_{\Omega_u} T^\ve \frac{\partial u^\ve}{\partial x_2} T^\ve \phi^\ve \,dxdy
+
\int_{\Omega_u} T^\ve u^\ve T^\ve \frac{\partial \phi^\ve}{\partial x_2} \,dxdy + o(1) \\
& \to 
\int_{\Omega_u} \frac{\partial u^0_+}{\partial x_2}  \phi \,dxdy
+
\int_{\Omega_u} u^0_+ \frac{\partial \phi}{\partial x_2} \,dxdy  \\
& = \int_{\Gamma_-} \int_{Y(x)}  u^0_+ \phi \nu_2 \, dy \, d\sigma,
\end{align*}
as $\ve$ tends to zero.

Thus
\begin{align*}
\int_{\Gamma_-} \int_{Y(x)} (u^0_+ - u^0_-) \phi \nu_2 \, dy \, d\sigma & = 0,
\quad \phi \in C^\infty_0(\Omega_U).
\end{align*}
It follows that
\begin{align}\label{eq:transmission}
u^0_- & = u^0_+ \quad \text{ in } L^2(\Gamma_-^u),
\end{align}
for by the Lipschitz continuity of $\eta_-$,
\begin{align*}
\nu_2 \ge  \frac{1 }{\sqrt{ 1 + \max (\eta_-')^2 }} > 0.
\end{align*}

The transmission condition \eqref{eq:transmission} means here that the trace of $u^0_+$ on each connected component of the sections $Y(x)$ for $x \in \Gamma_-$
is equal to the trace of $u^0_-$. That is, zero jump condition into each branch from below.\\

\clearpage
%
%
%
%

\noindent \textbf{Step 3: Limit problem.}\\

Let $\varphi \in C^\infty(\overline{\Omega_U})$ be such that $\nabla_y \varphi = 0$,
and $\varphi$ vanishes on $\Gamma \times \mathbb{T}$.
Set $\psi^\ve(x) = \varphi(x, \frac{x_1}{\ve})$, which belongs to $H^1(\Omega^\ve, \Gamma)$.
Using $\psi^\ve$ as test functions in \eqref{eq:unfoldedform}, 
one obtains
\begin{align}\label{eq:limitOmegaU}
\int_{\Omega_U} A^0 \nabla_x u \cdot \nabla_x \varphi \,dxdy & = 
\int_{\Omega_U} f \varphi \,dxdy,
\end{align}
in the limit as $\ve$ tends to zero,
where $u = \chi_{\Omega_u}u_+^0 + \chi_{\Omega_- \times \mathbb{T}}u_-^0
\in W(\Omega_U, \Gamma \times \mathbb{T})$ by Step 1.
In the upper part $\Omega_u$ one uses $p = 0$ by Step 2.
By the density of the considered set of test functions in $W(\Omega_U, \Gamma \times \mathbb{T})$, as $\Omega_U$ is Lipschitz, one has that $u = u^0$ is the unique solution to problem~\eqref{eq:limitproblemstrongdisconnected}.
One concludes that the weak convergences (i) and (ii) have been established
for the full sequences.
\end{proof}

\vspace*{5mm}

Remark that Lemma~\ref{lm:strongTve} holds without hypotheses (H1), (H2),
by the same argument, and it takes the following form, which is the result that corresponds to~Theorem~\ref{tm:justification} in this case.
\begin{lemma}\label{lm:strongTvenonconnected}
Let $u^\ve \in H^1(\Omega^\ve, \Gamma)$ be the solutions to~\eqref{eq:originalproblem},
and let $u^0 \in W(\Omega_U, \Gamma \times \mathbb{T})$ be the solution to~\eqref{eq:limitproblemstrongdisconnected}.
Then
\begin{align*}
\emph{(i)} & \quad T^\ve u^\ve \to u^0 \quad \text{ strongly in } L^2(\Omega_U),\\
\emph{(ii)} & \quad T^\ve \nabla u^\ve \to \big( \chi_{\Omega_- \times \mathbb{T}} \frac{\partial u^0}{\partial x_1} , \frac{\partial u^0}{\partial x_2}\big) \quad \text{ strongly in } L^2(\Omega_U),
\end{align*}
as $\ve$ tends to zero.
\end{lemma}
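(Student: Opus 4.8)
The plan is to rerun the energy-convergence argument from the proof of Theorem~\ref{tm:justification}, now built on the weak limits and the unfolded variational form~\eqref{eq:unfoldedform} established in the proof of Theorem~\ref{tm:weaknonconnected} and on the effective matrix~\eqref{eq:A0nonconnected}. Recall from there that, along the full sequence, $u^\ve \rightharpoonup u^0_-$ weakly in $H^1(\Omega_-,\Gamma)$, $T^\ve u^\ve \rightharpoonup u^0_+$ weakly in $L^2(\Omega_u)$, and $T^\ve \nabla u^\ve \rightharpoonup (p, \partial u^0_+/\partial x_2)$ weakly in $L^2(\Omega_u)$, with $u^0 = \chi_{\Omega_- \times \mathbb{T}} u^0_- + \chi_{\Omega_u} u^0_+$ the solution of~\eqref{eq:limitproblemstrongdisconnected}. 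The only structural change from the connected case is that $u^0_+$ may genuinely depend on $y$ across the disconnected components of the sections, so the limiting energy must be assembled on the full unfolded domain $\Omega_U$ rather than collapsed to $\Omega$ with the weight $h$.

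First I would record the exact energy identity obtained by testing the variational form of~\eqref{eq:originalproblem} with $\psi = u^\ve$, splitting into $\Omega_-$ and $\Omega^\ve_+$, and unfolding the periodic part exactly (unfolding is an isometry from $\Omega^\ve_+$ onto $\Omega^\ve_u$), giving $\int_{\Omega_-} |\nabla u^\ve|^2 \,dx + \int_{\Omega^\ve_u} |T^\ve \nabla u^\ve|^2 \,dxdy = \int_{\Omega_-} f u^\ve \,dx + \int_{\Omega^\ve_u} T^\ve f\, T^\ve u^\ve \,dxdy$. Using the zero-extension convention outside $\Omega^\ve_u$ together with weak lower semicontinuity of the $L^2$ norm on the fixed domain $\Omega_u$, the left-hand side is bounded below in the limit by $\int_{\Omega_u} p^2 \,dxdy + \int_{\Omega_U} A^0 \nabla_x u^0 \cdot \nabla_x u^0 \,dxdy$, where I use that $A^0$ from~\eqref{eq:A0nonconnected} is the identity on $\Omega_- \times \mathbb{T}$ and annihilates the $x_1$-derivative on $\Omega_u$. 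The right-hand side converges to $\int_{\Omega_U} f u^0 \,dxdy$: the $\Omega_-$ part by the strong $L^2$ convergence of $u^\ve$ furnished by Rellich, and the $\Omega_u$ part because $T^\ve f \to f$ strongly while $T^\ve u^\ve \rightharpoonup u^0_+$ weakly, after replacing $\Omega^\ve_u$ by $\Omega_u$ at cost $o(1)$ as in Lemma~\ref{lm:unfolding}. Testing the limit problem~\eqref{eq:limitOmegaU} with $u^0$ itself then identifies $\int_{\Omega_U} f u^0 = \int_{\Omega_U} A^0 \nabla_x u^0 \cdot \nabla_x u^0$, which closes the chain of inequalities into equalities.

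Equality throughout forces $\int_{\Omega_u} p^2 = 0$, hence $p = 0$ (reconfirming Step~1 of the proof of Theorem~\ref{tm:weaknonconnected}), and it forces norm convergence of the gradients. Combined with the weak convergences this upgrades $\nabla u^\ve \to \nabla u^0_-$ strongly in $L^2(\Omega_-)$ and $T^\ve \nabla u^\ve \to (0, \partial u^0_+/\partial x_2)$ strongly in $L^2(\Omega_u)$; transferring the $\Omega_-$ convergence to the unfolded variable by the near-isometry of $T^\ve$ on the fixed region yields $T^\ve \nabla u^\ve \to (\chi_{\Omega_- \times \mathbb{T}} \partial u^0/\partial x_1, \partial u^0/\partial x_2)$ strongly in $L^2(\Omega_U)$, which is part~(ii). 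For part~(i), strong $L^2(\Omega_- \times \mathbb{T})$ convergence of $T^\ve u^\ve$ follows from Rellich together with $T^\ve u^0_- \to u^0_-$, while on $\Omega_u$ I would integrate $\partial/\partial x_2$ upward from the interface: the trace of $T^\ve u^\ve$ on $\Gamma_-^u$ converges strongly, being the trace from $\Omega_-$ matched across the interface by the transmission identity~\eqref{eq:transmission}, and $T^\ve \partial u^\ve/\partial x_2$ converges strongly, so the fundamental theorem of calculus gives strong $L^2(\Omega_u)$ convergence of $T^\ve u^\ve$ to $u^0_+$.

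The main obstacle is bookkeeping rather than a new idea, exactly as the remark preceding the lemma suggests: one must retain the full $y$-dependence of $u^0_+$ throughout, so that the energy is compared against the correct right-hand side $\int_{\Omega_U} f u^0$ with no spurious $h$-weight introduced when passing between the exactly unfolded domains $\Omega_-,\Omega^\ve_u$ and the fixed limit domains $\Omega_- \times \mathbb{T},\Omega_u$. The strong convergence of the function values on the upper part is the only step requiring more than norm convergence of gradients, and it is the trace/transmission input~\eqref{eq:transmission} that makes the fundamental-theorem-of-calculus argument work uniformly across the possibly disconnected branches.
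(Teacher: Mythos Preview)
Your proposal is correct and follows the same energy-convergence route the paper indicates (the paper simply says ``by the same argument'' as Theorem~\ref{tm:justification}, whose proof in turn ends with the one-line claim that energy convergence upgrades each weak convergence in \eqref{eq:weaks1}--\eqref{eq:weaks3} to strong). The extra detail you supply for part~(i) on $\Omega_u$---matching the trace of $T^\ve u^\ve$ across $\Gamma_-^u$ with the strongly convergent trace from the $\Omega_-$ side and then integrating $\partial_{x_2}$ upward---is a legitimate way to make explicit the step the paper leaves implicit; an equivalent shortcut is to apply the Poincar\'e inequality in $x_2$ from $\Gamma \times \mathbb{T}$ directly to $T^\ve u^\ve - u^0$ on $\Omega_U$ (both vanish on $\Gamma \times \mathbb{T}$ and have no jump in $x_2$ across $\Gamma_-^u$), which reduces (i) to (ii) in one line modulo the same $O(\ve)$ domain bookkeeping.
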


\vspace*{5mm}

We will conclude this section by translating the 
limit problem~\eqref{eq:limitproblemstrongdisconnected}
into a system in the original coordinates $x \in \Omega$, ($\overline{\Omega}$ is the Hausdorff limit of $\overline{\Omega^\ve}$), in a particular case
of a finite number of 'bumps' on the boundary.


\begin{lemma}\label{lm:cover}
Let $\omega$ be an open set in $\mathbb{R}^N$.
Then there exists an open cover $K'$ of $\omega$ such that for all $v \in L^2(\omega)$
with $\nabla_{x'} v = 0$ almost everywhere in $\omega$, $(x,x') \in \omega$, one has that 
$v$ is constant in $x'$ almost everywhere in each $V \in K'$.
\end{lemma}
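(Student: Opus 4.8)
The plan is to localize: cover $\omega$ by product boxes whose $x'$-factor is convex (hence connected), and on each such box invoke the classical fact that an $L^2$ function with vanishing distributional $x'$-gradient on a set with connected $x'$-slices is independent of $x'$. The only genuine obstruction to global constancy in $x'$ is the possible disconnectedness of the slices $\{ x' : (x,x') \in \omega \}$; passing to product boxes removes it, at the cost of a merely local conclusion, which is exactly what the cover $K'$ is meant to record.

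First I would build $K'$ depending only on $\omega$. Writing the coordinates as $(x,x')$ with $x'$ in the factor on which the gradient is taken, for each point $z = (z_1,z_2) \in \omega$ the openness of $\omega$ furnishes a radius $r_z > 0$ with $V_z := B(z_1,r_z) \times B(z_2,r_z) \subseteq \omega$, the balls being taken in the respective factors. I set $K' = \{ V_z : z \in \omega \}$. This is an open cover of $\omega$ independent of $v$, and each $V_z = V_1 \times V_2$ has convex, hence connected, second factor $V_2 = B(z_2,r_z)$, so that every $x'$-slice of $V_z$ equals $V_2$. If desired, the Lindel\"of property yields a countable subcover, but this is not needed.

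Finally I would prove the local statement on a fixed $V = V_1 \times V_2 \in K'$. Testing the distributional identity $\nabla_{x'} v = 0$ against product test functions $\phi(x)\psi(x')$ with $\phi \in C_c^\infty(V_1)$ and $\psi \in C_c^\infty(V_2)$, and running $\phi$ through a countable dense family, one deduces that for a.e. $x \in V_1$ the slice $v(x,\cdot)$ lies in $L^2(V_2)$ and satisfies $\nabla_{x'} v(x,\cdot) = 0$ in $\mathcal{D}'(V_2)$. Since $V_2$ is connected, the classical zero-gradient theorem gives $v(x,\cdot) = w(x)$ a.e.\ for some value $w(x)$, and Fubini then yields $v(x,x') = w(x)$ for a.e.\ $(x,x') \in V$; that is, $v$ is constant in $x'$ a.e.\ on $V$. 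The main, though standard, technical point is this distributional slicing step together with the zero-gradient theorem on $V_2$; both are handled by mollification in the $x'$ variable on a slightly shrunk box, where $v_\delta = v *_{x'} \rho_\delta$ has classically vanishing $x'$-gradient on a convex set, is therefore constant in $x'$, and converges to $v$ in $L^2_{\mathrm{loc}}(V)$ as $\delta \to 0$. As $V \in K'$ was arbitrary, the conclusion holds on every element of the cover.
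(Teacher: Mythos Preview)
Your argument is correct and in fact more elementary than the paper's. The paper constructs, for each point of $\omega$, a \emph{maximal} open neighborhood whose $x'$-slices are connected, invoking Zorn's lemma to secure such maximal elements, and then concludes via the ACL (absolutely continuous on lines) characterization of Sobolev functions together with Fubini. You instead cover $\omega$ by small product boxes $B(z_1,r_z)\times B(z_2,r_z)$, whose $x'$-slices are automatically convex, and finish with a clean mollification/slicing argument. Both routes prove the lemma as stated. The payoff of the paper's maximal-neighborhood construction is that for domains with only finitely many ``bumps'' the resulting cover can be taken finite, and this finiteness is exactly what is assumed and exploited immediately after the lemma to build a finite partition $K$ and formulate the system~\eqref{eq:strongnonconnectedproblem}. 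Your product-box cover is never finite on a nonempty open set, so while it settles the lemma, it would not directly feed into that subsequent step without an additional coarsening argument.
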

\begin{proof}
A subset $V$ of $\omega$ is said to have the property $(P)$ if
the intersection of $V$ and any $k$-plane parallel to the $x'$-coordinate $k$-planes is connected, $x' \in \mathbb{R}^k$.
For $(x,x') \in \omega$, let $V(x,x')$ 
be a neighborhood of $(x,x')$ in $\omega$ that is not contained in any distinct neighborhood of $(x,x')$ in $\omega$ with the property (P).
By the Zorn lemma, at least one such maximal element exists for the set inclusion partial order on the set of all neighborhoods of $(x,x')$ with the property (P), because $\omega$ is open and if $F$ is a totally ordered subset, $\cup_{V \in F} V$ is an upper bound for $F$.
Then $K' = \{ V(x,x') : (x,x') \in \omega \}$ is an open cover of $\omega$.

Let $v \in L^2(\omega)$ be such that $\nabla_{x'}v = 0$ almost everywhere in $\omega$.
Then $v$ has a representative $\tilde{v}$ that is absolutely continuous on almost all line segments
 parallel to the $x'$-coordinate axes and whose classical partial derivatives parallel to the $x'$-coordinate axes
belong to $L^2(\omega)$.
It follows from Fubini's theorem that $\tilde v$ assumes constant value on each connected component of  each
 $k$-plane parallel to the $x'$-coordinate $k$-planes when intersected with $\omega$.
Therefore $v$ is constant in $x'$ almost everywhere on each $V \in K'$.
\end{proof}


Now we restrict to domains such that there is a finite open cover of $\Omega_U$ as in Lemma~\ref{lm:cover}
with $\omega = \Omega_U$ and $x' = y$, effectively discarding domains with a countably infinite number of 'bumps' on the boundary.
For instance, excluding $\eta(x,y) = x_1^2 \sin( \frac{y}{x_1} )$, and $\eta(x,y) = \mop{dist}(y,K)$, where $K$ is a Cantor set.

We say that $K$ is a partition of an open set $\omega$ in $\mathbb{R}^N$ if $K$ consists of
disjoint nonempty open subsets of $\omega$
such that $\overline{\omega} = \cup_{V \in K} \overline{V}$.
Under the assumption that there is a finite open cover $K'$ of $\Omega_U$ as in Lemma~\ref{lm:cover},
we can construct a finite partition $K$ of $\Omega_U$ as follows.
Given $K' = \{ V_1, \ldots, V_k \}$, let 
$K = \{ V_1, V_2 \setminus \overline{V_1}, \ldots, V_k \setminus \cup_{j=1}^{k-1} \overline{V_j} \}$.

Let $K$ be a finite partition of $\Omega_U$ with the same property as the open cover in Lemma~\ref{lm:cover}.
Denote the measure of the connected components of $Y(x)$ by
\begin{align*}
h_V & = |Y(x) \cap \{ y : (x,y) \in V \}|,
\end{align*}
for $V \in K$, that is the density of $\Omega^\ve$ in $\Omega$.
Denote the common boundaries of the subdomains $U, V \in K$ of the partition by 
\begin{align*}
\Gamma_{U,V} & = \partial U \cap \partial V.
\end{align*}
Let the effective matrix be given by
\begin{align*}
A^0 & = 
\begin{pmatrix}
\chi_{\Omega_-} & 0 \\
0 & 1 
\end{pmatrix}.
\end{align*}
Let 
\begin{align*}
W_K = \big\{ \, \{ v_V \}_{V \in K} : \,\, & v_V \in L^2(\pi_{\mathbb{R}^2} V, h_V), \\
& A^0 \nabla v_V \in L^2( \pi_{\mathbb{R}^2} V, h_V), \\
& [v] = 0 \text{ on } \pi_{\mathbb{R}^2} \Gamma_{U,V},   \\
& v_V = 0 \text{ on }  \Gamma \cap \pi_{\mathbb{R}^2} \overline{V},  \\ 
& U,V \in K\, \big\},
\end{align*}
where $\pi_{\mathbb{R}^2}$ denotes the projection onto $\mathbb{R}^2$,
equipped with the natural Hilbert space structure.

The limit problem
\begin{align}\label{eq:strongnonconnectedproblem}
-\mop{div}( h_V A^0 \nabla u_V) & = h_V f \quad \text{ in } \pi_{\mathbb{R}^2} V, \notag\\ 
u_V & = 0 \qquad\,\, \text{ on } \Gamma \cap \pi_{\mathbb{R}^2}\overline{V}, \notag\\
h_V  A^0 \nabla u_V \cdot \nu & = 0 \qquad\,\, \text{ on } (\partial \Omega \setminus \Gamma) \cap \pi_{\mathbb{R}^2}\overline{V}, \\
[u] & = 0 \qquad\,\, \text{ on } \pi_{\mathbb{R}^2} \Gamma_{U,V}, \notag\\ 
[h A^0 \nabla u \cdot \nu] & = 0 \qquad\,\, \text{ on } \pi_{\mathbb{R}^2} \Gamma_{U,V},\notag
\end{align}
is then well-posed in $W_K$.

\begin{corollary}\label{tm:notconnected}
Let $u^\ve \in H^1(\Omega^\ve, \Gamma)$ be the solutions to~\eqref{eq:originalproblem},
and let  $u^0 \in W(\Omega_U, \Gamma \times \mathbb{T})$ be the solution to~\eqref{eq:limitproblemstrongdisconnected}.
Let $K$ be a finite partition of $\Omega_U$ with the same property as the open cover in Lemma~\ref{lm:cover}
 with $\omega = \Omega_U$ and $x' = y$.
Then
\begin{align*}
\emph{(i)} & \quad \widetilde{u^\ve} \rightharpoonup \sum_{V \in K} h_V \chi_{\pi_{\mathbb{R}^2} V} u^0\big|_{V}
\quad \text{ weakly in } L^2(\Omega),\\
\emph{(ii)} & \quad \widetilde{\nabla u^\ve} \rightharpoonup \sum_{V \in K} h_V \chi_{\pi_{\mathbb{R}^2}V} A^0  \nabla u^0\big|_{V}
 \quad \text{ weakly in } L^2(\Omega),
\end{align*}
as $\ve$ tends to zero.
Moreover,
$u^0 = u_V^0$ in $V \in K$, where $\{ u_V^0 \}_{V \in K}$ in $W_K$ is the solution to~\eqref{eq:strongnonconnectedproblem}.
\end{corollary}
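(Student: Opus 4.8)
The plan is to combine Theorem~\ref{tm:weaknonconnected}, which already gives the weak limits in terms of the single function $u^0 \in W(\Omega_U, \Gamma \times \mathbb{T})$, with the partition $K$ to rewrite those integrals $\int_{Y(x)} u^0 \,dy$ and $\int_{Y(x)} A^0 \nabla_x u^0 \,dy$ as finite sums over the cells $V \in K$. The starting observation is that by Lemma~\ref{lm:cover} (applied to each $V$) together with the constraint $\nabla_y u^0 = 0$ built into $W(\Omega_U, \Gamma \times \mathbb{T})$, the solution $u^0$ is constant in $y$ on each $V \in K$; hence $u^0\big|_V$ descends to a function $u_V^0$ of $x$ alone on the projection $\pi_{\mathbb{R}^2} V$. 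First I would decompose the section as a disjoint (up to null sets) union $Y(x) = \bigcup_{V \in K} \{ y : (x,y) \in V \}$, so that
\begin{align*}
\int_{Y(x)} u^0 \,dy = \sum_{V \in K} \int_{\{ y : (x,y) \in V \}} u^0 \,dy = \sum_{V \in K} h_V(x)\, \chi_{\pi_{\mathbb{R}^2} V}(x)\, u_V^0(x),
\end{align*}
using that $u^0$ is $y$-independent on $V$ and that $|\{ y : (x,y) \in V \}| = h_V(x)$; the same manipulation applied componentwise to $A^0 \nabla_x u^0$ gives the flux formula. Substituting these identities into Theorem~\ref{tm:weaknonconnected}(i),(ii) yields the two weak convergences (i) and (ii) of the corollary verbatim.

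It remains to identify $u^0$ with the solution $\{ u_V^0 \}_{V \in K}$ of the decoupled-per-cell transmission problem~\eqref{eq:strongnonconnectedproblem} in $W_K$. Here I would argue by equivalence of the two variational formulations. On one hand, $u^0$ solves~\eqref{eq:limitOmegaU}, the weak form of~\eqref{eq:limitproblemstrongdisconnected} in $W(\Omega_U, \Gamma \times \mathbb{T})$. On the other hand, restricting test functions to those that are $y$-independent on each cell (the set is dense in $W(\Omega_U, \Gamma \times \mathbb{T})$ since each admissible function already satisfies $\nabla_y v = 0$ and is constant in $y$ on each $V$ by Lemma~\ref{lm:cover}), the fibre integral in $y$ over each cell produces the weights $h_V$, converting $\int_{\Omega_U} A^0 \nabla_x u^0 \cdot \nabla_x \varphi$ and $\int_{\Omega_U} f \varphi$ into $\sum_V \int_{\pi_{\mathbb{R}^2} V} h_V A^0 \nabla u_V^0 \cdot \nabla \varphi_V$ and $\sum_V \int_{\pi_{\mathbb{R}^2} V} h_V f \varphi_V$. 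This is exactly the weak form of~\eqref{eq:strongnonconnectedproblem} in $W_K$, so the map $u^0 \mapsto \{ u_V^0 \}_{V \in K}$ is a bijection between the two solution spaces that intertwines the two variational problems; by uniqueness of solutions (both problems are well-posed by the Lax--Milgram/Riesz argument already used for Lemma~\ref{lm:limitlemma}) one concludes $u^0 = u_V^0$ in $V$.

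The main obstacle I expect is bookkeeping at the cell interfaces rather than any deep analytic difficulty. Specifically, I must check that the transmission conditions defining $W_K$ are precisely the reflection of the single-function membership $u^0 \in W(\Omega_U, \Gamma \times \mathbb{T})$: the continuity jump $[v] = 0$ on $\pi_{\mathbb{R}^2} \Gamma_{U,V}$ comes from the fact that $u^0$ is a genuine $L^2$ function across the internal boundary $\Gamma_{U,V}$ (so its traces from $U$ and from $V$ agree wherever the cells meet over the same $x$-fibre), while the flux jump $[h A^0 \nabla u \cdot \nu] = 0$ is the natural boundary condition of~\eqref{eq:limitOmegaU} across $\Gamma_{U,V}$, obtained by testing against functions not vanishing there. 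Care is needed because the projection $\pi_{\mathbb{R}^2}$ may fold several cells onto overlapping regions of $\Omega$, so the interfaces $\pi_{\mathbb{R}^2}\Gamma_{U,V}$ and the relation between the Neumann boundary $(\partial \Omega \setminus \Gamma) \cap \pi_{\mathbb{R}^2}\overline{V}$ and the lateral boundary $\partial \Omega_U$ must be tracked so that the natural boundary condition of the $\Omega_U$-problem splits correctly into the per-cell Neumann and interface conditions of~\eqref{eq:strongnonconnectedproblem}. Once this correspondence of boundary and interface terms is verified, the identification of solutions and hence the stated weak limits follow directly.
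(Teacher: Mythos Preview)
Your proposal is correct and follows essentially the same route as the paper: start from Theorem~\ref{tm:weaknonconnected}, split $\int_{Y(x)} u^0 \,dy$ over the partition $K$ using that $\nabla_y u^0 = 0$ forces $u^0$ to be constant in $y$ on each $V$, and then verify that the variational formulation~\eqref{eq:limitOmegaU} over $\Omega_U$ is equivalent to~\eqref{eq:strongnonconnectedproblem} in $W_K$ under the identification $u_V^0 = u^0|_V$. The paper's proof is terser---it does not spell out the interface bookkeeping you flag---but the argument is the same.
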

\begin{proof}
It follows from the weak unfolding convergence $T^\ve u^\ve \rightharpoonup u^0$ in $L^2(\Omega_U)$, that
\begin{align*}
\widetilde{u^\ve} & \rightharpoonup \int_{Y(x)} u^0 \,dy
= \int_{Y(x)} \sum_{V \in K} \chi_V u^0 \,dy
= \sum_{V \in K}  h_V \chi_{\pi_{\mathbb{R}^2}V} u^0\big|_{V},
\end{align*}
as $\ve$ tends to zero.
The same computation gives the convergence of the flows.
By writing $u^0 = \sum_{V \in K} \chi_V u^0$, one verifies that the limit problem~\eqref{eq:strongnonconnectedproblem} is equivalent to the 
limit problem~\eqref{eq:limitproblemstrongdisconnected} with $u_V^0 = \chi_V u^0$ on $V$, for any suitable partition $K$.
\end{proof}


\bibliographystyle{plain}
\bibliography{LocalPer}

\begin{thebibliography}{10}

\bibitem{AiNaRa-CV}
S.~Aiyappan, A.~K. Nandakumaran, and R.~Prakash.
\newblock Generalization of unfolding operator for highly oscillating smooth
  boundary domains and homogenization.
\newblock {\em Calc. Var. Partial Differential Equations}, 57(3):Art. 86, 30,
  2018.

\bibitem{AiNaRa-CCM}
S.~Aiyappan, A.~K. Nandakumaran, and R.~Prakash.
\newblock Semi-linear optimal control problem on a smooth oscillating domain.
\newblock {\em Commun. Contemp. Math.}, 22(4):1950029, 26, 2020.

\bibitem{akimova2004asymptotics}
E.~Akimova, S.~Nazarov, and G.~Chechkin.
\newblock Asymptotics of the solution of the problem of deformation of an
  arbitrary locally periodic thin plate.
\newblock {\em Transactions of the Moscow Mathematical Society}, 65:1--29,
  2004.

\bibitem{allaire1992homogenization}
G.~Allaire.
\newblock Homogenization and two-scale convergence.
\newblock {\em SIAM Journal on Mathematical Analysis}, 23(6):1482--1518, 1992.

\bibitem{amirat2004asymptotic}
Y.~Amirat, O.~Bodart, U.~De~Maio, and A.~Gaudiello.
\newblock Asymptotic approximation of the solution of the laplace equation in a
  domain with highly oscillating boundary.
\newblock {\em SIAM journal on mathematical analysis}, 35(6):1598--1616, 2004.

\bibitem{arrieta2011homogenization}
J.~M. Arrieta and M.~C. Pereira.
\newblock Homogenization in a thin domain with an oscillatory boundary.
\newblock {\em Journal de Math{\'e}matiques Pures et Appliqu{\'e}es},
  96(1):29--57, 2011.

\bibitem{ArVi-SIMA-16}
J.~M. Arrieta and M.~Villanueva-Pesqueira.
\newblock Unfolding operator method for thin domains with a locally periodic
  highly oscillatory boundary.
\newblock {\em SIAM J. Math. Anal.}, 48(3):1634--1671, 2016.

\bibitem{ArVi-JMAA-17}
J.~M. Arrieta and M.~Villanueva-Pesqueira.
\newblock Thin domains with non-smooth periodic oscillatory boundaries.
\newblock {\em J. Math. Anal. Appl.}, 446(1):130--164, 2017.

\bibitem{babuska}
I.~Babuska.
\newblock Solutions of interface problems by homogenization.
\newblock {\em SIAM J. Math. Anal.}, Part 1, 7(5):603-634; Part 2,
  7(5):635-645; Part 3 8(6):923-937,1976.

\bibitem{bakhvalov1974averaged}
N.~S. Bakhvalov.
\newblock Averaged characteristics of bodies with periodic structure.
\newblock In {\em Doklady Akademii Nauk}, volume 218(5), pages 1046--1048.
  Russian Academy of Sciences, 1974.

\bibitem{bakhvalov1975averaging2}
N.~S. Bakhvalov.
\newblock Averaging of nonlinear partial differential equations with rapidly
  oscillating coefficients.
\newblock In {\em Doklady Akademii Nauk}, volume 225(2), pages 249--252.
  Russian Academy of Sciences, 1975.

\bibitem{bakhvalov1975averaging}
N.~S. Bakhvalov.
\newblock Averaging of partial differential equations with rapidly oscillating
  coefficients.
\newblock In {\em Doklady Akademii Nauk}, volume 221(3), pages 516--519.
  Russian Academy of Sciences, 1975.

\bibitem{bakhvalov2012homogenisation}
N.~S. Bakhvalov and G.~Panasenko.
\newblock {\em Homogenisation of processes in periodic media}.
\newblock Nauka, Moscow, 1984.

\bibitem{oleinik1975}
O.~A. Bakhvalov.
\newblock On convergence of solutions of elliptic and parabolic equations when
  coefficients weakly converge.
\newblock In {\em Uspekhi mat. nauk}, volume 30(4), pages 257--258, 1975.

\bibitem{ball2002some}
J.~M. Ball.
\newblock Some open problems in elasticity.
\newblock In {\em Geometry, mechanics, and dynamics}, pages 3--59. Springer,
  2002.

\bibitem{berdichevsky}
V.~L. Berdichevsky.
\newblock Spacial homogenization of periodic structures.
\newblock In {\em Dokl. Akad Nauk SSSR}, volume 222(3), pages 565--567. Russian
  Academy of Sciences, 1975.

\bibitem{BlGaGr-JMPA-1}
D.~Blanchard, A.~Gaudiello, and G.~Griso.
\newblock Junction of a periodic family of elastic rods with a 3d plate. part
  {I}.
\newblock {\em Journal de math{\'e}matiques pures et appliqu{\'e}es},
  88(1):1--33, 2007.

\bibitem{BlGaGr-JMPA-2}
D.~Blanchard, A.~Gaudiello, and G.~Griso.
\newblock Junction of a periodic family of elastic rods with a thin plate. part
  {II}.
\newblock {\em Journal de math{\'e}matiques pures et appliqu{\'e}es},
  88(2):149--190, 2007.

\bibitem{borisov2010asymptotics}
D.~Borisov and P.~Freitas.
\newblock Asymptotics of dirichlet eigenvalues and eigenfunctions of the
  laplacian on thin domains in rd.
\newblock {\em Journal of Functional Analysis}, 258(3):893--912, 2010.

\bibitem{BriCha78}
R.~Brizzi and J.-P. Chalot.
\newblock Homog\'{e}n\'{e}isation de fronti\'{e}re.
\newblock {\em Th\'{e}se, Universit\'{e} de Nice}, 1978.

\bibitem{BrCh-RdM-97}
R.~Brizzi and J.-P. Chalot.
\newblock Boundary homogenization and {Neumann} boundary value problem.
\newblock {\em Ricerche di Matematica}, 46(2):341--388, 1997.

\bibitem{chechkin1999boundary}
G.~A. Chechkin, A.~Friedman, and A.~L. Piatnitski.
\newblock The boundary-value problem in domains with very rapidly oscillating
  boundary.
\newblock {\em Journal of Mathematical Analysis and Applications},
  231(1):213--234, 1999.

\bibitem{cioranescu2002periodic}
D.~Cioranescu, A.~Damlamian, and G.~Griso.
\newblock Periodic unfolding and homogenization.
\newblock {\em Comptes Rendus Mathematique}, 335(1):99--104, 2002.

\bibitem{CiDaGr-SIMA-08}
D.~Cioranescu, A.~Damlamian, and G.~Griso.
\newblock The periodic unfolding method in homogenization.
\newblock {\em SIAM Journal on Mathematical Analysis}, 40(4):1585--1620, 2008.

\bibitem{cioranescu2018periodic}
D.~Cioranescu, A.~Damlamian, and G.~Griso.
\newblock {\em The Periodic Unfolding Method: Theory and Applications to
  Partial Differential Problems}, volume~3.
\newblock Springer, 2018.

\bibitem{DaPe-DCDS}
A.~Damlamian and K.~Pettersson.
\newblock Homogenization of oscillating boundaries.
\newblock {\em Discrete Contin. Dyn. Syst.}, 23(1-2):197--210, 2009.

\bibitem{de2005asymptotic}
U.~De~Maio, T.~Durante, and T.~A. Mel'Nyk.
\newblock Asymptotic approximation for the solution to the robin problem in a
  thick multi-level junction.
\newblock {\em Mathematical Models and Methods in Applied Sciences},
  15(12):1897--1921, 2005.

\bibitem{DurMel}
T.~Durante and T.~A. Mel'nyk.
\newblock Homogenization of quasilinear optimal control problems involving a
  thick multilevel junction of type 3: 2: 1.
\newblock {\em ESAIM: Control, Optimisation and Calculus of Variations},
  18(02):583--610, 2012.

\bibitem{esposito1997homogenization}
A.~C. Esposito, P.~Donato, A.~Gaudiello, and C.~Picard.
\newblock Homogenization of the p-laplacian in a domain with oscillating
  boundary.
\newblock {\em Comm. Appl. Nonlinear Anal}, 4(4):1--23, 1997.

\bibitem{Ga-RdM-94}
A.~Gaudiello.
\newblock Asymptotic behaviour of non-homogeneous {Neumann} problems in domains
  with oscillating boundary.
\newblock {\em Ricerche di Matematica}, 43(2):239--292, 1994.

\bibitem{GaGuMu-ARMA}
A.~Gaudiello, O.~Guib\'{e}, and F.~Murat.
\newblock Homogenization of the brush problem with a source term in {$L^1$}.
\newblock {\em Arch. Ration. Mech. Anal.}, 225(1):1--64, 2017.

\bibitem{GaMe-JDE-18}
A.~Gaudiello and T.~A. Mel'nyk.
\newblock Homogenization of a nonlinear monotone problem with nonlinear
  {S}ignorini boundary conditions in a domain with highly rough boundary.
\newblock {\em J. Differential Equations}, 265(10):5419--5454, 2018.

\bibitem{gaudiello2019homogenization}
A.~Gaudiello and T.~A. Mel'nyk.
\newblock Homogenization of a nonlinear monotone problem with a big nonlinear
  signorini boundary interaction in a domain with highly rough boundary.
\newblock {\em Nonlinearity}, 32(12):5150, 2019.

\bibitem{hecht2012new}
F.~Hecht.
\newblock New development in freefem++.
\newblock {\em Journal of numerical mathematics}, 20(3-4):251--266, 2012.

\bibitem{jikov1994sm}
V.~V. Jikov, S.~M. Kozlov, and O.~A. Oleinik.
\newblock Homogenization of differential operators and integral functionals.
\newblock {\em Springer-Verlag, Berlin}, 1994.

\bibitem{kufner1985weighted}
A.~Kufner.
\newblock {\em Weighted sobolev spaces}, volume~31.
\newblock John Wiley \& Sons Incorporated, 1985.

\bibitem{kufner1987some}
A.~Kufner and A.-M. S{\"a}ndig.
\newblock {\em Some applications of weighted Sobolev spaces}, volume 100.
\newblock Springer, 1987.

\bibitem{lipton1990darcy}
R.~Lipton and M.~Avellaneda.
\newblock Darcy's law for slow viscous flow past a stationary array of bubbles.
\newblock {\em Proceedings of the Royal Society of Edinburgh Section A:
  Mathematics}, 114(1-2):71--79, 1990.

\bibitem{MaNaRa2018}
R.~Mahadevan, A.~K. Nandakumaran, and R.~Prakash.
\newblock Homogenization of an elliptic equation in a domain with oscillating
  boundary with non-homogeneous non-linear boundary conditions.
\newblock {\em Applied Mathematics \& Optimization}, pages 1--34, 2018.

\bibitem{marchenko1964boundary}
V.~A. Marchenko and E.~Y. Khruslov.
\newblock Boundary-value problems with fine-grained boundary.
\newblock {\em Matematicheskii Sbornik}, 107(3):458--472, 1964.

\bibitem{mel1999homogenization}
T.~A. Mel'nyk.
\newblock Homogenization of the poisson equation in a thick periodic junction.
\newblock {\em Zeitschrift f{\"u}r Analysis und ihre Anwendungen},
  18(4):953--975, 1999.

\bibitem{Me-MMA-08}
T.~A. Mel'nyk.
\newblock Homogenization of a boundary-value problem with a nonlinear boundary
  condition in a thick junction of type 3: 2: 1.
\newblock {\em Math. Methods Appl. Sci.}, 31(9):1005--1027, 2008.

\bibitem{mel2015asymptotic}
T.~A. Mel'nyk.
\newblock Asymptotic approximation for the solution to a semi-linear parabolic
  problem in a thick junction with the branched structure.
\newblock {\em Journal of Mathematical Analysis and Applications},
  424(2):1237--1260, 2015.

\bibitem{mel1997asymptotics}
T.~A. Mel'nyk and S.~A. Nazarov.
\newblock Asymptotics of the neumann spectral problem solution in a domain of
  'thick comb' type.
\newblock {\em J. Math. Sci}, 85(6):2326--2346, 1997.

\bibitem{mel2010asymptotic}
T.~A. Mel'nyk and A.~V. Popov.
\newblock Asymptotic analysis of boundary-value problems in thin perforated
  domains with rapidly varying thickness.
\newblock {\em Nonlinear oscillations}, 13(1):57--84, 2010.

\bibitem{NanRavBid1}
A.~K. Nandakumaran, R.~Prakash, and B.~C. Sardar.
\newblock Homogenization of an optimal control via unfolding method.
\newblock {\em SIAM Journal on Control and Optimization}, 53(5):3245--3269,
  2015.

\bibitem{nazarov2001asymptotic}
S.~A. Nazarov.
\newblock Asymptotic theory of thin plates and rods. vol. 1. dimension
  reduction and integral estimates.
\newblock {\em Nauchnaya Kniga}, 2001.

\bibitem{nguetseng1989general}
G.~Nguetseng.
\newblock A general convergence result for a functional related to the theory
  of homogenization.
\newblock {\em SIAM Journal on Mathematical Analysis}, 20(3):608--623, 1989.

\bibitem{oleinik2009mathematical}
O.~A. Oleinik, A.~S. Shamaev, and G.~A. Yosifian.
\newblock {\em Mathematical problems in elasticity and homogenization}.
\newblock North-Holland, Amsterdam, 1992.

\bibitem{papanicolau1978asymptotic}
G.~Papanicolau, A.~Bensoussan, and J.-L. Lions.
\newblock {\em Asymptotic analysis for periodic structures}, volume~5.
\newblock Elsevier, 1978.

\bibitem{pettersson2017two}
I.~Pettersson.
\newblock Two-scale convergence in thin domains with locally periodic rapidly
  oscillating boundary.
\newblock {\em Differential Equations \& Applications}, 9(3):393--412, 2017.

\bibitem{sanchez1980non}
E.~S{\'a}nchez-Palencia.
\newblock Non-homogeneous media and vibration theory.
\newblock {\em Lecture notes in physics}, 127, 1980.

\bibitem{spagnolo2007sulla}
E.~Spagnolo and S.~De~Giorgi.
\newblock Sulla convergenza degli integrali dellenergia per operatori ellittici
  del secondo ordine.
\newblock {\em Boll. Unione Mat. Ital. 8}, pages 391--411, 1973.

\bibitem{tartar1977problemes}
L.~Tartar.
\newblock Problemes dhomogeneisation dans les equations aux derivees
  partielles.
\newblock {\em Cours Peccot, College de France}, 1977.

\bibitem{zhikov2004two}
V.~V. Zhikov.
\newblock On two-scale convergence.
\newblock {\em Journal of Mathematical Sciences}, 120(3):1328--1352, 2004.

\end{thebibliography}

\end{document}